 \newtheorem{thm}{Theorem} [section]
 \newtheorem*{theorem*}{Theorem}
\newtheorem{definition}[thm]{Definition}
\newtheorem{lem}[thm]{Lemma}
 \newtheorem{prop}[thm]{Proposition}
\newtheorem{cor}[thm]{Corollary}
 \newtheorem{conj}[thm]{Conjecture}
 \newtheorem{prob}[thm]{Problem}
\theoremstyle{remark}
\newcommand{\mP}{\mathcal{P}}  \newcommand{\mQ}{\mathcal{Q}}
\newcommand{\mO}{\mathcal{O}}  \newcommand{\mA}{\mathcal{A}}
\newcommand{\mAx}{\mathcal{AX}}
\newcommand{\mC}{\mathcal{C}}
\newcommand{\mT}{\mathcal{T}} 
\newcommand{\mRC}{\mathcal{RC}}
\newcommand{\mIC}{\mathcal{IC}}
\newcommand{\mE}{\mathcal{E}}
\newcommand{\mF}{\mathcal{F}}
\newcommand{\mM}{\mathcal{M}}  \newcommand{\mN}{\mathcal{N}}
\newcommand{\bZ}{\mathbb{Z}}
 \newcommand{\bP}{\mathbb{P}} 
 \newcommand{\bF}{\mathbb{F}}  
 \newcommand{\fO}{\mathfrak{O}}  
 \newcommand{\ep}{\epsilon} 
\newcommand{\bbbm}{ \begin{bmatrix}} \newcommand{\bebm}{\end{bmatrix}}
\newcommand{\bbm}{ \begin{pmatrix}} \newcommand{\bem}{\end{pmatrix}}
\newcommand{\bbsm}{ \left( \begin{smallmatrix}} \newcommand{\besm}{\end{smallmatrix} \right)}
\newcommand{\beq}{\begin{equation}}      \newcommand{\eeq}{\end{equation}}
\newcommand{\beqn}{\begin{eqnarray}}      \newcommand{\eeqn}{\end{eqnarray}}
\newcommand{\beqs}{\begin{eqnarray*}}      \newcommand{\eeqs}{\end{eqnarray*}}
\newcommand{\bep}{\begin{proof}}      \newcommand{\eep}{\end{proof}}
\begin{document}
\title{On the $PGL_2(q)$-orbits of lines of $PG(3,q)$  and binary quartic forms  in characteristic three}
\author{Krishna Kaipa$^1$ }
\address{$^1$Department of Mathematics, Indian Institute of Science Education and Research, Pune, Maharashtra, 411008 India.}
\address{$^2$Department of Mathematics, Indian Institute of Technology Bombay, Mumbai 400076, India
}
\address{$^{\ast}$Corresponding author}

\author{Puspendu Pradhan$^{2,\ast}$}
\email{$^1$kaipa@iiserpune.ac.in, $^2$puspendupradhan1@gmail.com}

\subjclass{51E20, 51N35, 14N10,  05B25, 05E10, 05E14, 05E18} 
\keywords{Rational normal curve, binary quartic forms, Klein quadric, $\jmath$-invariant}
\date{}

\begin{abstract}
We consider the problem of classifying the lines of  the projective $3$-space $PG(3,q)$ over a finite field $\bF_q$  into orbits of the group $PGL_2(q)$ of linear symmetries of the twisted cubic $C$. The problem has been solved in literature  in characteristic different from $3$, and in this work,  we solve the problem  in characteristic $3$.  We reduce this problem to another problem, which is the classification of binary quartic forms into $PGL_2(q)$-orbits. We first solve the latter problem and use to solve the former problem. We also obtain the point-line and the line-plane incidence structures of the point, line, and plane orbits.
\end{abstract}
 \maketitle
\section{Introduction} \label{introduction}
Let $V$ be a vector space of dimension $(n+1)$ over a field $F$. Let $\bP(V)$ denote the $n$-dimensional projective space associated with it. Let $PGL_{n+1}(V)$ denote the group of projectivities of $\bP(V)$. When $F=\bF_q$, a finite field of order $q$, we denote the associated $n$-dimensional projective space by $PG(n,q)$ and the group of projectivities by $PGL_{n+1}(q)$. Let $C$ be a \emph{twisted cubic curve} in  $PG(3,q)$  defined by the image of the embedding 
\[PG(1,q) \hookrightarrow PG(3,q), \quad  (s,t) \mapsto (s^3,s^2t,st^2,t^3). \]
For $q\geq 5$, the subgroup of $PGL_4(q)$ which fixes $C$ is isomorphic to $PGL_2(q)$ which we denote as $G$. The group $G$ acts triply transitively on the points of $C$, since its action on $PG(1,q)$ is triply transitive. The \emph{osculating plane} at a point $(s^3,s^2t,st^2,t^3)$ of $C$ is given by  \[t^3X_0-3st^2X_1+3s^2tX_2-s^3X_3=0.\]

 % Let $PG^{(1)}(3,q)$ denote the lines of the finite projective space $PG(3,q)$. 
 We say that a line $L$  of $PG(3,q)$ is \emph{generic} if it neither intersects $C$, nor lies in any of the osculating planes of $C$. We say a line is \emph{non-generic} if it is not generic. 
 
 The problem studied in this paper is:
\begin{prob}  \label{mainproblem}
Classify the generic lines of $PG(3,q)$ into $G$-orbits.
\end{prob} 
This problem was recently solved  in \cite{KPP} in the case when characteristic of the field $\bF_q$ is different from $2$ and $3$. In the case of characteristic $2$, the problem was solved by M. Ceria and F. Pavese in  \cite{Ceria_Pavese}. In this work, we solve the problem for the remaining case of characteristic $3$. Throughout this work, we assume char$(\bF_q)=3$. 
 The class of  generic lines is denoted as $\mO_6$ in the book   \cite[p.236]{Hirschfeld3}. 
The classification of non-generic lines  into ten $G$-orbits is well-known in literature,   for example \cite[Theorem 8.1]{BPS},   \cite[p.236]{Hirschfeld3},   \cite[Theorem 3.1]{DMP1}. 
It has been an open problem to classify  the class $\mO_6$ of generic lines into $G$-orbits.
A detailed conjecture was made by A. Davydov, S. Marcugini and F. Pambianco in  \cite[Conjecture 8.2]{DMP1}. The part of this conjecture relevant to characteristic $3$ is:
\begin{conj} \label{conj1}\cite{DMP1} The number of $G$-orbits of  generic lines of $PG(3,q)$ is $(2q-3)$,    of which  there are $q/3, (q-1)$ and $(2q-6)/3$  orbits of size $|G|, |G|/2$ and $|G|/4$, respectively.
\end{conj}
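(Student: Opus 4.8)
The plan is to reduce Problem~\ref{mainproblem} to the classification of binary quartic forms under $PGL_2(q)$, solve that problem, and transport the answer back.

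\emph{Step 1: from generic lines to binary quartics.} Restriction to $C$ identifies the dual space $(\bF_q^4)^*$ with the space of binary cubic forms (this map is injective because $C$ spans $PG(3,q)$), and a line $L$ then corresponds to the pencil $\mP_L$ of binary cubics cut out by the planes through $L$. A short check shows that $L$ is generic precisely when $\mP_L$ has no common root (equivalently $L\cap C=\emptyset$) and contains no perfect cube (equivalently $L$ lies in no osculating plane). To a pencil $\mP_L=\langle p,p'\rangle$ attach the Jacobian $J(p,p')=\partial_s p\,\partial_t p'-\partial_t p\,\partial_s p'$, a binary quartic determined up to a scalar and depending $G$-equivariantly on $L$. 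The first technical point is to prove that for a generic line this quartic is nonzero and \emph{squarefree}: $J(p,p')\equiv 0$ exactly when $\mP_L$ contains a cube, and a repeated root of $J(p,p')$ would force the degree-$3$ map $(p:p')$ to have a totally ramified point — here one uses that in characteristic $3$ such a point has ramification index $3=p$ — which again puts a cube in $\mP_L$; both are excluded for generic $L$. This produces a $G$-equivariant map $\mathcal{Q}$ from the generic lines to the set $\mathcal{D}$ of squarefree binary quartics mod scalars.

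\emph{Step 2: classify binary quartics under $PGL_2(q)$ in characteristic $3$.} Partition $\bP(\mathrm{Sym}^4 W)$ by the multiplicity pattern of the roots over $\bar\bF_q$ and, for the squarefree quartics, by the Galois orbit-structure of the four roots and the value of the $j$-invariant. Over $\bar\bF_q$ a squarefree quartic has stabiliser the Klein four-group, except at $j=0$, where — characteristic $3$ being the exceptional case — the usual two special quartics (harmonic and equianharmonic) collide and the stabiliser jumps; these are handled separately. For each $\bF_q$-orbit one computes $\mathrm{Stab}_{PGL_2(q)}$: for a squarefree $F$ it is the subgroup of the Klein four-group commuting with the Frobenius permutation of the roots, hence $V_4$ for root-patterns $1^4$ and $2^2$, a group of order $2$ for $21^2$ and $4$, and trivial for $31$ (suitably modified at $j=0$). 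A Burnside count over $PGL_2(q)$ gives the number of orbits of each type — for instance $(q+3)/3$ orbits of pattern $31$ — yielding the complete list of binary-quartic orbits with their stabilisers.

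\emph{Step 3: lift along $\mathcal{Q}$, and count.} It remains to describe the fibres of $\mathcal{Q}$ on generic lines and the induced action of $\mathrm{Stab}_{PGL_2(q)}(F)$, noting that for a generic line $L$ with $F=\mathcal{Q}(L)$ the stabiliser $\mathrm{Stab}_{PGL_2(q)}(L)$ is the stabiliser of the pencil $\mP_L$ inside $\mathrm{Stab}_{PGL_2(q)}(F)$. The key structural input — and the reason the answer has size $2q-3$ rather than $\sim q$ as in odd characteristic $\neq 3$ — is that over $\bar\bF_q$ the fibre $\mathcal{Q}^{-1}([F])$ is in bijection with the degree-$3$ isogenies out of the elliptic curve $E_F$ obtained as the double cover of $\bP^1$ branched along the zeros of $F$; in characteristic $3$ there are exactly two of these (kernels $\mu_3$ and $\bZ/3$) when $E_F$ is ordinary, i.e. $j(F)\neq 0$, and exactly one when $E_F$ is supersingular, i.e. $j(F)=0$. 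Over $\bF_q$ the two sheets are either both rational or Galois-conjugate over $\bF_{q^2}$, and $\mathrm{Stab}_{PGL_2(q)}(F)$ either fixes each sheet or interchanges them; running through the cases of Step~2 shows each binary-quartic orbit contributes $0$, $1$ or $2$ generic-line orbits, with stabilisers of order $1$, $2$ or $4$. Summing gives $2q-3$ orbits of generic lines, of which $q/3$ have trivial stabiliser, $q-1$ have a stabiliser of order $2$, and $(2q-6)/3$ have a stabiliser of order $4$; as a final check one verifies $\tfrac{q}{3}|G|+(q-1)\tfrac{|G|}{2}+\tfrac{2q-6}{3}\tfrac{|G|}{4}=q(q-1)^2(q+1)$, which equals $(q^2+1)(q^2+q+1)$ minus the combined size of the ten known non-generic orbits, confirming no generic line is missed.

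\emph{Main obstacle.} The hard part is Step~3: the classical dictionary "pencil of binary cubics $\leftrightarrow$ binary quartic" (and its elliptic-curve incarnation) degenerates precisely in characteristic $3$ — degree-$3$ maps $\bP^1\to\bP^1$ can be inseparable, the tangent developable of $C$ is non-reduced (all osculating planes share a common line), $\mathrm{Sym}^3 W$ and $\mathrm{Sym}^4 W$ fail to be semisimple $SL_2(\bF_q)$-modules, and $E[3]$ is $\mu_3\times\bZ/3$ or local rather than $(\bZ/3)^2$. Pinning down exactly which quartics are hit, with how many $\bF_q$-rational sheets, and how the (small) stabiliser permutes them — with the supersingular locus $j=0$ and the $\bF_{q^2}$-twist that splits some orbits into two handled correctly — is where all the characteristic-$3$ content sits. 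A secondary difficulty is carrying out the orbit count of Step~2 in characteristic $3$ if it is not already available in the literature.
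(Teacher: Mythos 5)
Your Step~1 collapses in characteristic $3$: for binary cubics $p,p'$ the Jacobian $\partial_X p\,\partial_Y p'-\partial_Y p\,\partial_X p'$ is \emph{identically zero}. Euler's identity gives $X\partial_X p+Y\partial_Y p=3p=0$, so $Y\partial_Y p=-X\partial_X p$ and likewise for $p'$; multiplying the Jacobian by $XY$ and substituting shows it vanishes as a polynomial (a direct expansion gives the same: the only surviving term is $3(a_1b_2-a_2b_1)X^2Y^2=0$). So the $G$-equivariant map $\mathcal{Q}$ on which the whole argument rests does not exist -- the characteristic-$3$ degeneration you flag in your closing paragraph is exactly what kills your own covariant, and the subsequent squarefreeness argument via ramification of the degree-$3$ map $(p:p')$ is moot. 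The paper produces the quartic by a different mechanism: the axis $\mA$ (the common line of all osculating planes, special to characteristic $3$) gives a distinguished point of the Klein quadric, and the $PGL_2$-equivariant isomorphism $\bP\bigl((\wedge^2D_3V)/\hat A\bigr)\simeq\bP(\mathrm{Sym}^4(V^*))$ sends a line $L$ not meeting $\mA$ to the quartic $f_L$ whose coefficients are Pl\"ucker coordinates of $L$; geometrically, $f_L(s,t)=0$ exactly when $L$ meets the tangent line to $C$ at $\nu_3(s,t)$.

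Second, even granting some replacement covariant, your Step~3 -- which you yourself identify as carrying all the content -- is asserted rather than proved, and its shape does not match what actually happens. Under the paper's correspondence the map $L\mapsto f_L$ is a \emph{bijection} from generic lines onto quartics with $\jmath\neq 0$ (squarefree quartics with $z_2=0$, i.e.\ $\jmath=0$, are hit by no line), so there is no two-sheeted fibre and no $3$-isogeny dichotomy to analyse. The count $2q-3$ instead comes from the finer classification of quartic orbits by $\jmath$-invariant: for $i\in\{1,2,4\}$ and each $r\in J_i$ there are exactly $i$ orbits of size $|G|/i$ (one of split type $\mF_i$ and $i-1$ of type $\mF_i'$), where $|J_1|=q/3$, $|J_2|=(q-1)/2$, $|J_4|=(q-3)/6$ are governed by whether $r$ is a square and by $\mathrm{Tr}_{\bF_q/\bF_3}(1/\sqrt r)$. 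Your Step~2 data (stabilizers $\bZ/2\bZ\times\bZ/2\bZ$, $\bZ/2\bZ$, trivial for the respective root patterns, and $(q+3)/3$ orbits of pattern $31$) do agree with the paper's Theorem~\ref{result2}, but without a working line--quartic dictionary and with Step~3 only sketched, the conjecture is not established by your argument.
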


% \begin{tabular}{*{5}{c}| c}
%  $|G|$ & $|G|/2$ & $|G|/3$ & $|G|/4$ & $|G|/12$   &  \# orbits  \\
% &&&&&\\
%  $q/3$ & $q-1$ & $0$  & $(2q-6)/3$ &  $0$ &   $2q-3$  \\ 
% \end{tabular}
% \end{conj}
% The last column is the total number of $G$-orbits. The columns in between give the number of orbits of various sizes listed in the first row.  
Our main result is: 
\begin{theorem*} \label{main} The above conjecture is true.
\end{theorem*}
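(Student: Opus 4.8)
The plan is to reduce Problem~\ref{mainproblem} to the classification of binary quartic forms over $\bF_q$ (char $3$) into $PGL_2(q)$-orbits, carry out that classification, and then translate the count back. The geometric reduction is standard via the Klein correspondence: a line $L$ of $PG(3,q)$ corresponds to a point on the Klein quadric in $PG(5,q)$, and the action of $G\cong PGL_2(q)$ on lines is the exterior-square of its action on $PG(1,q)$ lifted to $PG(3,q)$ via the twisted cubic embedding. Under this, the ambient $\bF_q^4 = \mathrm{Sym}^3(\bF_q^2)$ and $\wedge^2(\mathrm{Sym}^3(\bF_q^2))$ decomposes as a $G$-module; the generic-line locus should correspond (up to the non-generic strata one excludes) to a copy of the space of binary quartics $\mathrm{Sym}^4(\bF_q^2)$, with $G$ acting by substitution. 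I would first make this decomposition explicit in characteristic $3$ (where $\mathrm{Sym}$ is not semisimple, so some care is needed), identify the $G$-invariant complement, and check that generic lines map to quartic forms with distinct roots over $\overline{\bF_q}$ — i.e.\ to quartics of nonzero discriminant.

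Next I would classify binary quartics with nonzero discriminant into $PGL_2(q)$-orbits in characteristic $3$. The key invariant is the $\jmath$-invariant (equivalently the cross-ratio of the four roots, symmetrized), which is $PGL_2$-invariant and takes values in $\bF_q$; the fiber over each value of $\jmath$ is a union of finitely many orbits, distinguished by finer arithmetic data (whether the root field is $\bF_q$, $\bF_{q^2}$, $\bF_{q^3}$, or $\bF_{q^4}$, and quadratic-residue conditions governing the stabilizer). I expect the stabilizer in $PGL_2(q)$ of a quartic to be generically trivial (orbit size $|G|$), of order $2$ at the "harmonic"/"equianharmonic" special values or when there is an extra symmetry (orbit size $|G|/2$), and of order $4$ when the four roots form a single $S_4$-type symmetric configuration whose stabilizer is a Klein four-group (orbit size $|G|/4$). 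Counting: $|G| = q(q^2-1)$ must equal $\sum (\text{orbit sizes})$ over all quartics of nonzero discriminant; a direct enumeration of such quartics (there are $q^4 - q$ of them up to scalar, suitably counted) combined with Burnside/orbit-counting over the conjugacy classes of $PGL_2(q)$ — which in char $3$ has its usual list (identity, unipotent, split and nonsplit semisimple) — should pin down the three orbit-size classes and yield exactly $q/3$ orbits of size $|G|$, $q-1$ of size $|G|/2$, and $(2q-6)/3$ of size $|G|/4$, for a total of $(2q-3)$.

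Finally I would transfer this back to lines: verify that the reduction is a bijection of $G$-sets between generic lines and nonzero-discriminant quartics (injectivity is the content that no two inequivalent generic line-configurations give the same quartic, which follows from the twisted cubic being $G$-canonical), so orbits and stabilizers match on the nose, proving Conjecture~\ref{conj1}. As a byproduct the point-line and line-plane incidences claimed in the abstract come from tracking which osculating planes and points of $C$ are incident with a representative line of each orbit, read off from the roots of the associated quartic.

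The main obstacle I anticipate is characteristic $3$ itself: the equianharmonic $\jmath$-value and the "harmonic" one can collide or degenerate (the usual $j = 0,\ 1728$ dichotomy breaks, and $1728 = 0$ in char $3$), the representation $\mathrm{Sym}^d$ is no longer semisimple so the module-theoretic reduction needs an explicit invariant complement rather than an abstract one, and the stabilizer computations near the special $\jmath$-values require separating the genuinely different small-characteristic phenomena (e.g.\ an extra unipotent element in a stabilizer) from the generic picture. Getting the exact counts $q/3$ and $(2q-6)/3$ — which are only integers because $3 \mid q$ — is precisely where these characteristic-$3$ subtleties must be handled correctly; I expect this to be the delicate heart of the argument, with the rest being a careful but routine orbit-stabilizer bookkeeping.
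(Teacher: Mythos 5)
Your overall strategy---reduce to binary quartic forms via the Klein correspondence and the quotient of $\wedge^2(\mathrm{Sym}^3(V^*)^*)$ by the $G$-invariant line spanned by the axis, classify the quartics, and transfer back---is exactly the route the paper takes. But there is a genuine error at the pivot of the reduction: you assert that generic lines correspond to quartics of nonzero discriminant. In characteristic $3$ this is false, and correcting it changes the answer. The correspondence $L \mapsto f_L$ identifies lines not meeting the axis with quartics whose $X^2Y^2$-coefficient $z_2$ is nonzero, and generic lines with quartics satisfying \emph{both} $\Delta(f)\neq 0$ and $z_2(f)\neq 0$, i.e.\ $\jmath(f)=z_2(f)^6/\Delta(f)\neq 0$. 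Quartics with $\Delta\neq 0$ but $z_2=0$ (for instance $XY(Y^2-X^2)$) have distinct roots yet are not $f_L$ for any line $L$: they lie in the $G$-invariant hyperplane $z_2=0$ of $\mathrm{Sym}^4(V^*)$, which in characteristic $3$ is precisely the locus $\jmath=0$. These are $|G|$ forms making up five extra orbits (one in each of $\mF_4,\mF_2,\mF_1,\mF_4',\mF_2'$, of sizes $|G|/24$, $|G|/4$, $|G|/3$, $|G|/8$, $|G|/4$). Had you classified all nonzero-discriminant quartics and transferred, you would obtain $2q+2$ orbits of generic lines, with sizes including $|G|/3$, $|G|/8$, $|G|/24$, contradicting the conjectured $2q-3$ orbits of sizes $|G|$, $|G|/2$, $|G|/4$. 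So the reduction must isolate $\jmath\neq 0$, not merely $\Delta\neq 0$ (also, the number of nonzero-discriminant quartics is $q^4-q^2$, not $q^4-q$).

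Your stabilizer heuristic is also off in a way that matters for the bookkeeping. For $\jmath(f)\neq 0$ the stabilizer of $f$ in $PGL_2(\overline{\bF_q})$ is always the Klein four-group realized by the double transpositions of the four roots; the rational stabilizer in $PGL_2(q)$ is its intersection with the centralizer of the permutation induced by Frobenius on the roots, hence has order $4,4,2,2,1$ according as $f\in\mF_4,\mF_4',\mF_2,\mF_2',\mF_1$. Thus orbit size is governed by the factorization type of $f$ over $\bF_q$, not by special values of $\jmath$ (the only special value in characteristic $3$ is $\jmath=0$, since $1728=0$); in particular a quartic with four rational roots and generic $\jmath$ has stabilizer of order $4$, not $1$. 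Burnside counting over conjugacy classes would at best give the total number of orbits; to obtain the conjectured distribution you need, as the paper does, the cross-ratio and restricted-ordering analysis showing that for each $r$ in the classes $J_1,J_2,J_4$ partitioning $\bF_q^\times$ (nonsquares form $J_2$; squares split by whether $\mathrm{Tr}_{\bF_q/\bF_3}(1/\sqrt r)$ vanishes) there are exactly $i$ orbits of size $|G|/i$ with $\jmath=r$, which is what yields the counts $q/3$, $q-1$, and $(2q-6)/3$.
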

The detailed result is given in Theorem \ref{thm_main} below. We briefly explain our approach to the problem: A binary quartic form 
\[f(X,Y)=  z_0 X^4+z_1 X^3Y +z_2 X^2Y^2 +z_3 XY^3+ z_4Y^4,\]
is a homogeneous polynomial of degree $4$ in two variables $X, Y$. The group $GL_2(q)$ acts on such forms by 
\[ g  f(X,Y)=\det(g)^{-4} f(dX-bY, aY-cX), \quad g = \bbsm a&b\\c&d\besm. \]
This gives an action of the group $G=PGL_2(q)$ on the projective space of binary quartic forms.
The form $f(X,Y)$ has repeated factors if and only if its discriminant
\[ \Delta(f) = (z_0z_4-z_1z_3)^3+(z_0z_4-z_1z_3)^2 z_2^2 - z_2^3(z_0z_3^2+z_4z_1^2-z_0z_2z_4),\]
vanishes. The $\jmath$-invariant of a form with $\Delta(f) \neq 0$ is given by 
\[\jmath(f) = z_2(f)^6/\Delta(f).\]
We will show that:\\

\emph{There is a $G$-equivariant correspondence between the set of generic lines of $PG(3,q)$ and the set of  binary quartic forms over $\bF_q$ whose discriminant and $\jmath$-invariant are nonzero.}\\

This correspondence arises as follows: A generic line $L$ intersects  the tangent line to $C$ at four distinct points $(s^3,s^2t,st^2,t^3)$ of $C(\overline{\bF_q})$ if and only if the associated quartic form $f_L$ vanishes at $(s,t)$, that is, $(Xt-Ys)$ is a factor of $f_L(X,Y)$.  Therefore,  Problem \ref{mainproblem}  is equivalent to 
decomposing the projective space of binary quartic forms with non-zero discriminant and $\jmath$-invariant  into $G$-orbits.\\

There are two  problems closely related to Problem \ref{mainproblem} as follows: First, we recall that the points of $PG(3,q)$ decompose into five $G$-orbits denoted $\mC, \mAx, \mT, \mRC, \mIC$, and similarly the planes of $PG(3,q)$ decompose as $\amalg_{i=1}^5 \mathcal{N}_i$ into five $G$-orbits,  (see Propositions  \ref{point_orbits} and \ref{plane_orbits}, respectively).
\begin{prob} \label{prob1} For representatives $L$ of each of the line orbits, determine the proportion of the set $\mP$ of the $(q+1)$ points of $L$, lying in each of the $5$ point orbits:
\[ \frac{|\mathcal \mP \cap \mM|}{q+1}, \quad \mM \in \{\mC, \mAx, \mT, \mRC, \mIC \}.\]
\end{prob}

\begin{prob} \label{prob2} For representatives $L$ of each of the line orbits, determine the proportion of the set $S$ of the pencil of $(q+1)$ planes containing $L$, lying in each of the $5$ plane  orbits:
\[ \frac{|S \cap \mathcal N_i|}{q+1}, \quad i=1,\dots ,5.\]
\end{prob}

When $\text{char}(\bF_q)\neq 3$, Problem \ref{prob1} and \ref{prob2} are equivalent, because there is a $G$-equivariant  polarity induced by $C$.  When $\text{char}(\bF_q)\neq 2,3$ both these problems have been solved for the orbits of non-generic lines by Davydov, Marcugini and Pambianco in \cite{DMP3,DMP5}, G\"unay and Lavrauw in \cite{GL}, while  the problem for generic lines remains open. When $\text{char}(\bF_q)= 2$, both the problem have been solved for all lines (generic as well as non-generic) by Ceria and Pavese in \cite{Ceria_Pavese}. When $\text{char}(\bF_q)= 3$, Problem \ref{prob1} and \ref{prob2}  for all the orbits of non-generic lines have been solved independently by A. Davydov, S. Marcugini and F. Pambianco in \cite{DMP3, DMP5}. 
% and \cite{DMP5}, respectively. 
In this work, we solve both the problems for all the orbits of generic lines when $\text{char}(\bF_q)= 3$. The detailed result is given in Theorem \ref{result3} and \ref{result4}.

\subsection{Statement of results}~\\ \label{statement}
We need some more notation in order to state the results: We partition $\bF_q^\times = J_4 \cup J_2 \cup J_1$ where 
\begin{align*}
    J_2&=\{ r\in \bF_q^\times \colon r  \text{ is a non-square in $\bF_q$}\},\\
    J_4&=\{ r\in \bF_q^\times \colon r  \text{ is a square in $\bF_q$ and Tr$_{\bF_q/\bF_3}(1/\sqrt{r})=0$}\},\\
J_1&=\{ r\in \bF_q^\times \colon r  \text{ is a square in $\bF_q$ and Tr$_{\bF_q/\bF_3}(1/\sqrt{r})\neq 0$}\}.
\end{align*}
% \[ J_i = \{r \in \bF_q^\times : T^3-T^2-r^{-1} \quad \text{has $i-1$ roots in $\bF_q$} \}.\]
The sizes of the sets $J_i$ are (see Lemma \ref{Ji_lem}): 
\[ |J_4|=(q-3)/6, \quad |J_2|=(q-1)/2, \quad |J_1|=q/3. \]

  The set  of binary quartic forms with non-zero discriminant can be partitioned as $\mF_1 \cup \mF_2 \cup \mF_2' \cup \mF_4 \cup \mF_4'$ based on the factorization of $f$ over $\bF_q[X,Y]$. The irreducible factors (over $\bF_q$) in each case are: 
  \begin{description}
      \item[$\mF_4:$]  $4$ linear forms,
      \item[$\mF_2:$] $2$ linear forms and a quadratic form,
\item[$\mF_1:$] $1$ linear forms and a cubic form,
 \item[$\mF_4':$]  $2$ quadratic forms,
 \item[$\mF_2':$] $1$ quartic form (that is, $f$ is irreducible over $\bF_q$).    
  \end{description}
  
\begin{thm} \label{result2} The set of $(q^4-q^2)$  binary quartic forms over $\bF_q$ with non-zero discriminant, decomposes into the following $G$-orbits parametrized by the $\jmath$-invariant.  The number of orbits of size indicated by the column index, and of $\jmath$-invariant indicated by the row index, is given by the corresponding entry of the table below. The total number of orbits is $2q+2$.
\begin{table}[h!]  
\begin{tabular}{c| *{6}{c}}
   & $|G|$ & $\tfrac{|G|}{2}$ & $\tfrac{|G|}{3}$ & $\tfrac{|G|}{4}$ & $\tfrac{|G|}{8}$    &  $\tfrac{|G|}{24}$  \\
&&&&&&\\  \hline \\
$\jmath(f)\in J_4$ & $0$ & $0$ & $0$ & $4$ & $0$ &  $0$  \\
$\jmath(f)\in J_2$ & $0$ & $2$ & $0$ & $0$ & $0$ &  $0$  \\
$\jmath(f)\in J_1$ & $1$ & $0$ & $0$ & $0$ & $0$ &  $0$  \\
$\jmath(f)=0$  & $0$ & $0$ & $1$ & $2$ &  $1$  &  $1$
  \\ [1ex]
\hline  \\ [1ex]
\end{tabular}
\caption{Table of $G$-orbits of quartic forms with nonzero discriminant}
\label{table:1}
\end{table}
For $i \in \{1,2,4\}$,  for each $r \in J_i$,  there are $i$ orbits of size $|G|/i$ that have $\jmath$-invariant $r$.  For   $i \in \{2,4\}$,  of the $i$ orbits, there is one orbit in $\mF_i$ and $(i-1)$ orbits in $\mF_i'$. There is one orbit with $\jmath$-invariant zero in each of the types $\mF_1,\mF_2,\mF_4,\mF_2',\mF_4'$.
\end{thm}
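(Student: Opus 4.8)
The plan is to organize the count around the $\jmath$-invariant map, which is a $G$-invariant, and to analyze the fiber of this map over each value $r \in \bF_q \cup \{\infty\}$ -- though here, since the discriminant is nonzero, $r$ ranges over $\bF_q$. The first step is to set up coordinates on the $\jmath$-fiber: the condition $\jmath(f) = z_2^6/\Delta(f) = r$ defines, for each fixed value of $z_2$ (which we may scale), a subvariety of the projective space of quartics, and we can stratify by whether $z_2 = 0$ (forcing $\jmath = 0$) or $z_2 \neq 0$. For $z_2 \neq 0$ we normalize $z_2 = 1$, and then $\Delta(f)$ becomes a fixed nonzero constant $1/r$; this cuts out an affine surface in the variables $(z_0,z_1,z_3,z_4)$, and $G$ acts on it. The counting strategy is then: compute $|\jmath^{-1}(r)|$ as a set of forms, identify the stabilizer structure of points, and divide to get the number and sizes of orbits. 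The arithmetic partition $\bF_q^\times = J_4 \cup J_2 \cup J_1$ together with $\{0\}$ must emerge precisely from when the generic stabilizer in $G$ of a form in the fiber has order $1$, $2$, or $4$, so a key computation is the stabilizer of a ``generic'' quartic with given $\jmath$-invariant.

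The second step, which I expect to carry the most weight, is to reduce each fiber to a normal form. Over $\overline{\bF_q}$, a binary quartic with nonzero discriminant is $PGL_2$-equivalent to a form in Weierstrass-like normal shape, and its $\jmath$-invariant classifies it; the $\overline{\bF_q}$-stabilizer of a generic such form is the Klein four-group $V_4 \cong (\bZ/2)^2$ (the four permutations of the four roots that preserve cross-ratio pairs), with degenerations to larger groups ($\bZ/2 \ltimes \bZ/2$ or $\mathrm{Alt}_4/\dots$) exactly at the special $\jmath$-values. The $\bF_q$-orbits in a single $\overline{\bF_q}$-orbit are then governed by Galois cohomology $H^1(\mathrm{Gal}(\overline{\bF_q}/\bF_q), \mathrm{Stab})$, which for a finite field with $\mathrm{Stab} = V_4$ has order $4$ -- matching the ``$4$ orbits of size $|G|/4$'' in the $J_4$ row. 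Similarly $|H^1| = 2$ when the stabilizer is $\bZ/2$ (the $J_2$ row), and the orbit is unique when the stabilizer is trivial or when $H^1$ vanishes (the $J_1$ row). So the heart of the argument is: (i) determine for which $r$ the geometric stabilizer is $V_4$ versus larger; (ii) show that the condition ``$r$ a square with $\mathrm{Tr}_{\bF_q/\bF_3}(1/\sqrt r) = 0$'' is exactly the condition that the relevant stabilizer stays $V_4$ over $\bF_q$ (no larger $\bF_q$-rational symmetry), while the trace-nonzero squares give a partial collapse and the non-squares give yet another behavior; (iii) verify the $\jmath = 0$ row separately, where the geometric stabilizer jumps and one gets the more refined list $|G|/3, |G|/4, |G|/4, |G|/8, |G|/24$ across the five factorization types.

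The third step is to match the $\bF_q$-rational orbits with the factorization types $\mF_1, \mF_2, \mF_2', \mF_4, \mF_4'$. This is a separate bookkeeping task: each $\bF_q$-form of the geometric orbit has its four roots permuted by the Frobenius, and the cycle type of Frobenius on the four roots (constrained to lie in $\mathrm{Stab} \cdot \mathrm{Gal}$ appropriately) determines the factorization type. For a fixed $\overline{\bF_q}$-orbit with stabilizer $V_4$, the four $\bF_q$-classes should distribute as: one with all roots rational ($\mF_4$), and three with a pair of conjugate quadratics or a quadratic-plus-two-linear pattern, which is where the ``one orbit in $\mF_i$ and $(i-1)$ in $\mF_i'$'' statement comes from. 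The main obstacle I anticipate is the characteristic-$3$ subtlety: the discriminant and $\jmath$-invariant formulas given in the excerpt are the characteristic-$3$ reductions (note the absence of the usual $I$-invariant and the simplified $\Delta$), so the classical $\mathrm{SL}_2$-invariant theory of binary quartics degenerates, and I cannot simply quote characteristic-zero normal forms. I would need to redo the normal-form reduction directly in characteristic $3$ -- likely by exhibiting an explicit quartic with prescribed $\jmath$-invariant in each type and computing its stabilizer by hand -- and then confirm that the orbit-counting identity $\sum (\text{orbit sizes}) = q^4 - q^2$ holds as a global consistency check, which will catch any miscount. The final totals ($2q+2$ orbits, with the row sums $i$ per value of $r \in J_i$) then follow by summing $|J_4|\cdot 4 + |J_2| \cdot 2 + |J_1| \cdot 1 + 5$.
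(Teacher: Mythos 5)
Your route is genuinely different from the paper's: you propose to fix the geometric ($\overline{\bF_q}$-) orbit with given $\jmath$-invariant and count the rational orbits inside it by Galois descent, i.e.\ by twisted classes in $H^1(\mathrm{Gal}(\overline{\bF_q}/\bF_q),\mathrm{Stab})$ (using Lang's theorem to kill $H^1$ of $PGL_2$), whereas the paper never invokes cohomology: it parametrizes orbits by cross-ratios of ``restricted orderings'' of the roots modulo explicit subgroups $H_i$ of the anharmonic group, computes rational stabilizers as centralizers of the Frobenius permutation $\sigma_\phi$ inside the geometric stabilizer, and gets the orbit counts by dividing $|\mF_i|$ by orbit sizes. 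Your framework can in principle deliver the whole table: the $\jmath=0$ row is exactly the five (twisted) conjugacy classes of the geometric stabilizer $S_4$, with orbit sizes $|G|/|C_{S_4}(c)|$, and the factorization type of each twist is read off from the cycle type of the twisted Frobenius on the four roots.

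However, as written there are concrete gaps. First, the assertion that $H^1$ ``has order $4$'' whenever the stabilizer is $V_4$ is false without specifying the Galois action: for a finite group the size of $H^1(\hat{\bZ},A)$ depends on how Frobenius acts on $A$, and for $V_4$ it is $4$, $2$ or $1$ according as Frobenius permutes the three involutions trivially, by a transposition, or by a $3$-cycle. In characteristic $3$ the geometric stabilizer is $V_4$ for \emph{every} $r\neq 0$, so the trichotomy $J_4,J_2,J_1$ is not detected by the geometric stabilizer at all (your step (i)); it is detected by the Frobenius action on the roots of some rational representative with $\jmath=r$. Determining that action is the real arithmetic content, and your proposal only promises it: one must exhibit a family such as the paper's $E_r=X(Y^3-Y^2X+r^{-1}X^3)$ with $\jmath(E_r)=r$ and invoke the characteristic-$3$ criterion that $S^3-rS+r$ has $3$, $1$ or $0$ roots in $\bF_q$ according as $r$ is a square with $\mathrm{Tr}_{\bF_q/\bF_3}(1/\sqrt r)=0$, a non-square, or a square with nonzero trace; without this, the square/trace description of the sets $J_i$ (hence the rows of the table) is unproved. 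Second, your matching with factorization types is partly wrong: for $r\in J_4$ the three nontrivial twists all lie in $\mF_4'$ (the twisted Frobenius is a double transposition), none in a ``quadratic-plus-two-linear'' class, and for $r\in J_2$ the second orbit lies in $\mF_2'$ (twisted Frobenius a $4$-cycle); this is fixable inside your framework by computing the cycle type of $\sigma_c\sigma_\phi$, but as stated it does not establish the ``one orbit in $\mF_i$ and $i-1$ in $\mF_i'$'' clause. Finally, for the $\jmath=0$ row you must prove the characteristic-$3$ peculiarity that the geometric stabilizer is all of $S_4$ (the harmonic and equianharmonic configurations coincide since $1728\equiv 0$), not the characteristic-zero groups $D_4$ or $A_4$ you allude to; that is what produces the five classes with sizes $|G|/24,\,|G|/8,\,|G|/4,\,|G|/4,\,|G|/3$.
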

A list of representatives of these $(2q+2)$-orbits together with the isomorphism class of the stabilizer subgroup of each orbit  is given in Table \ref{table-BQF}.\\

\begin{thm} \label{thm_main}
The number of $G$-orbits $\fO$ of generic lines of  size indicated by the column index and 
$\jmath$-invariant indicated by the row index is given in the table below.
\begin{table}[h!]
\noindent \begin{tabular}{c| *{3}{c}}
$\jmath(\fO)$   & $|G|$ & $\tfrac{|G|}{2}$ & $\tfrac{|G|}{4}$    \\
  \hline \\
$\jmath\in J_4$ & $0$ & $0$  & $4$   \\
$\jmath\in J_2$ & $0$ & $2$ & $0$    \\
$\jmath\in J_1$ & $1$ & $0$ & $0$  
%$\jmath=1728$ & $0$ &  \small{$\begin{cases}  0 &\text{ if $q \equiv \pm 1 \mod 12$} \\
%2 &\text{ if $q \equiv \pm 5 \mod 12$} \end{cases}$}  & $0$ & 
% \small{$\begin{cases} 4 &\text{ if $q \equiv \pm 1 \mod 12$} \\
%0 &\text{ if $q \equiv \pm 5 \mod 12$} \end{cases}$} & $0$   \\
%$\jmath=0$  & $0$ &  \small{$(1-\mu)$} &  \small{$(1+\mu)$} & $\tfrac{1+\mu}{2}$ &  $\tfrac{1+\mu}{2}$  
  \\ [1ex]
\hline
\\
\emph{$\#$ of orbits} & $\tfrac{q}{3}$ &  \small{$q-1$} & $\tfrac{2q-6}{3}$. \\ [1ex]
\end{tabular}
\caption{Table of $G$-orbits of generic lines}
\label{table:2}
\end{table}
 \end{thm}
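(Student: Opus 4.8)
The plan is to deduce Theorem~\ref{thm_main} --- and with it the main Theorem, which asserts the truth of Conjecture~\ref{conj1} --- from Theorem~\ref{result2} together with the $G$-equivariant correspondence between generic lines of $PG(3,q)$ and binary quartic forms of nonzero discriminant and nonzero $\jmath$-invariant announced in the introduction. Taking that correspondence as an input, it is a $G$-equivariant bijection of $G$-sets; hence it carries $G$-orbits to $G$-orbits bijectively, the stabilizer of a generic line coincides with that of its quartic (so orbit sizes match), and, via the definition $\jmath(\fO):=\jmath(f_L)$, it transports the $\jmath$-invariant. Therefore the array recording the generic-line orbits by size and by $\jmath$-invariant is obtained from Table~\ref{table:1} simply by deleting the row $\jmath(f)=0$: the surviving rows $J_4$, $J_2$, $J_1$ are exactly Table~\ref{table:2}. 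So the proof splits into establishing the correspondence and tallying the first three rows of Table~\ref{table:1}.

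The tally is immediate. By Theorem~\ref{result2}, for $i\in\{1,2,4\}$ and each $r\in J_i$ there are exactly $i$ orbits of size $|G|/i$ with $\jmath$-invariant $r$; and by Lemma~\ref{Ji_lem}, $|J_1|=q/3$, $|J_2|=(q-1)/2$, $|J_4|=(q-3)/6$. Hence among generic lines there are $|J_1|=q/3$ orbits of size $|G|$, $2|J_2|=q-1$ orbits of size $|G|/2$, and $4|J_4|=(2q-6)/3$ orbits of size $|G|/4$, which are the column totals of Table~\ref{table:2}; and the number of orbits of generic lines is
\[
\frac{q}{3}+(q-1)+\frac{2q-6}{3}=\frac{q+3(q-1)+(2q-6)}{3}=\frac{6q-9}{3}=2q-3 .
\]
As a consistency check, the number of generic lines is then $\tfrac{q}{3}|G|+(q-1)\tfrac{|G|}{2}+\tfrac{2q-6}{3}\cdot\tfrac{|G|}{4}=(q-1)|G|=q(q-1)^2(q+1)$, in agreement with the known size of the class $\mO_6$. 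Since the sizes and multiplicities match those predicted in Conjecture~\ref{conj1}, the main Theorem follows.

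What remains, and where the characteristic-three hypothesis genuinely enters, is the correspondence itself; this is the step I expect to be the main obstacle. Recall that $f_L$ is the binary quartic whose roots $(s:t)$ are the parameters at which the tangent line to $C$ at $(s^3,s^2t,st^2,t^3)$ meets $L$, so that $L$ is a common transversal of the four tangent lines of $C$ at the roots of $f_L$. One must show that $L\mapsto f_L$ induces a $G$-equivariant bijection from the set of generic lines onto the set of projective classes of binary quartics $f$ with $\Delta(f)\neq 0$ and $\jmath(f)\neq 0$; concretely: (i) a line is generic exactly when its associated quartic has nonzero discriminant and nonzero $\jmath$-invariant, (ii) every such quartic is realized, and (iii) distinct generic lines yield distinct quartics (so that, the map being $G$-equivariant, the point stabilizers agree). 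Here $\Delta(f_L)\neq 0$ captures that $L$ does not meet $C$ --- a repeated root of $f_L$ corresponds to a secant or a tangent of $C$ --- while $\jmath(f_L)\neq 0$, equivalently the middle coefficient $z_2(f_L)$ being nonzero, is the characteristic-three feature: in characteristic three the osculating plane at $(s^3,s^2t,st^2,t^3)$ collapses to $t^3X_0-s^3X_3=0$, and this degeneration is precisely what separates the quartics with $\Delta\neq 0$, $\jmath=0$ (which come from non-generic lines) from those attached to generic lines. Establishing (i)--(iii), together with the bookkeeping that matches the factorization types $\mF_1,\mF_2,\mF_2',\mF_4,\mF_4'$ and the partition $\bF_q^\times=J_4\cup J_2\cup J_1$ on the two sides so that the stabilizers line up, is the technical core; granted it, Theorem~\ref{thm_main} drops out of Theorem~\ref{result2} exactly as in the tally above.
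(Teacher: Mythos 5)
Your proposal is correct and follows essentially the same route as the paper: the paper's proof of Theorem \ref{thm_main} is exactly the citation of the $G$-equivariant correspondence together with Theorem \ref{result2} and the sizes $|J_i|$ from Lemma \ref{Ji_def}, giving $i|J_i|$ orbits of size $|G|/i$ for $i\in\{1,2,4\}$. The ``technical core'' you defer (points (i)--(iii)) is precisely Proposition \ref{correspondence}, already established in \S\ref{two}, so nothing further is required beyond the tally you carried out.
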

In other words, for each $i \in \{1, 2, 4\}$, there are $i |J_i|$ orbits of size $|G|/i$. In Table \ref{table-Lines}, we list the generators in $PG(3,q)$ of each of these line orbits.\\

In order to state our results
about Problems \ref{prob1} and \ref{prob2}, we need one more definition. Let $\mE_r$ denote the elliptic curve in $\bP^2$ given by
\[ \mE_r: x^2 = y^3+ y^2- r^{-1},\]
and let $\# \mE_r(\bF_q)$ denote the number of points of $\mE_r$ over $\bF_q$. The $\jmath$-invariant of the elliptic curve $\mE_r$ in characteristic $3$  is also $r$ (following  the definition in \cite[Appendix A, Proposition 1.1]{Silverman_AEC}).

\begin{thm} \label{result3}
Let $L$ be a generic line and let $S$ denote the set of planes through $L$.
Let  $f_L$ denote the binary quartic form associated with $L$, and let $r=\jmath(f_L)$. \\
\begin{enumerate}
\item In case  $f_L \in \mF_i$ for $i \in \{1, 2, 4\}$,  we have 
\begin{enumerate}
        \item $|S\cap \mN_1|=0$,
        \item $|S\cap \mN_2|=i$,
        \item $|S\cap \mN_4|=q+1-\frac{\#\mE_r(\bF_q)+i}{2}$,
\item $|S\cap \mN_3|=\tfrac{\#\mE_r(\bF_q)-3i}{6}$,
   \item $|S\cap \mN_5|=\tfrac{\#\mE_r(\bF_q)}{3}$.    \\
    \end{enumerate}

\item In case  $f_L \in \mF_2' \cup \mF_4'$, we have:
\begin{enumerate}
        \item $|S\cap \mN_1|=|S\cap \mN_2|=0$
         \item $|S\cap \mN_4|=q+1-\frac{\#\mE_r(\bF_q)}{2}$,
\item $|S\cap \mN_3|=\tfrac{\mE_r(\bF_q)}{6}$,
 \item $|S\cap \mN_5|=\tfrac{\#\mE_r(\bF_q)}{3}$. 
         \end{enumerate}
    \end{enumerate}
\end{thm}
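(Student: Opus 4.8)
The plan is to reduce the count of plane orbits through a generic line $L$ to a point count on the elliptic curve $\mE_r$, exploiting the $G$-equivariant correspondence between generic lines and binary quartic forms of nonzero discriminant and $\jmath$-invariant. First I would fix a convenient representative of the orbit of $L$ with $\jmath(f_L)=r$ (one of the $(2q+2)$ representatives from Table \ref{table-BQF} / the line representatives in Table \ref{table-Lines}), so that the pencil $S$ of $q+1$ planes through $L$ becomes an explicit $\bP^1$-family. Each plane $\Pi \in S$ meets the twisted cubic $C$ in a degree-$3$ divisor, equivalently corresponds to a binary cubic form $g_\Pi(s,t)$ (the restriction of the plane equation to $C$), and the $G$-orbit of $\Pi$ among $\mN_1,\dots,\mN_5$ is determined by the factorization type of $g_\Pi$ over $\bF_q$ together with whether $\Pi$ is an osculating plane (triple root), a tangent-containing plane, etc. So the combinatorial content is: as $\Pi$ ranges over the pencil, how often does $g_\Pi$ have a repeated root, split completely, have one rational root, or be irreducible.

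The key step is to identify this pencil of binary cubics with a plane cubic curve. Writing the pencil of planes as $\lambda \Pi_0 + \mu \Pi_1$ and restricting to $C$ gives a two-dimensional linear system of binary cubics; its discriminant locus (values of $(\lambda:\mu)$ where $g_\Pi$ has a repeated root) is cut out by a polynomial whose zero set, suitably normalized, is an affine plane curve of the form $x^2 = y^3 + y^2 - r^{-1}$ — this is exactly where the curve $\mE_r$ enters, and where the explicit value $r = \jmath(f_L)$ must be matched to the defining equation. I would compute the discriminant of the generic cubic in the pencil in terms of the pencil parameter and the invariants $z_2(f_L), \Delta(f_L)$ of the associated quartic, show that after a change of variable it becomes the equation of $\mE_r$, and then read off the four counts: $|S \cap \mN_3|$ (osculating planes, i.e. cubics with a triple root) and $|S \cap \mN_5|$ are governed by $\#\mE_r(\bF_q)$ modulo the cases where the pencil meets special loci, while $|S \cap \mN_4|$ is the complementary count $q+1$ minus the rest, and $|S \cap \mN_1|, |S \cap \mN_2|$ count planes meeting $C$ in a line-plus-point or containing a tangent/chord — these are the finitely many "degenerate" members of the pencil, whose number ($0$, $i$, depending on whether $f_L \in \mF_i$ or $f_L \in \mF_2' \cup \mF_4'$) is forced by the factorization type of $f_L$ itself. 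The factor-of-$3$ and factor-of-$2$ bookkeeping (dividing $\#\mE_r(\bF_q)$ by $6$, by $3$, subtracting $3i$, etc.) comes from the action of the automorphisms of $\mE_r$ in characteristic $3$ and the $2$-to-$1$ nature of the $x$-coordinate map, so I would carry out that orbit-counting carefully.

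I expect the main obstacle to be establishing the precise identification of the discriminant curve of the pencil with $\mE_r: x^2 = y^3 + y^2 - r^{-1}$, including getting the normalization right so that the $\jmath$-invariant $r$ appears literally as $r^{-1}$ in the constant term; this is a characteristic-$3$ computation where the usual Weierstrass reductions fail and where the distinction between the five factorization types $\mF_1, \mF_2, \mF_4, \mF_2', \mF_4'$ changes which $\bF_q$-rational points of $\mE_r$ actually occur (some "missing" points correspond to the degenerate members already counted in $\mN_1, \mN_2$, which is why the $\mF_i$ cases subtract $i$ or $3i$ while the $\mF_i'$ cases do not). Once that identification is in hand, the five displayed formulas follow by partitioning the $q+1$ members of the pencil according to root structure and matching each part to a subset of $\mE_r(\bF_q)$ of controlled size, with the totals checked against $\#\mE_r(\bF_q)$ and against the orbit sizes from Theorem \ref{thm_main}.
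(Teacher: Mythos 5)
Your overall strategy is the right one and matches the paper's: restrict the pencil of planes through $L$ to $C$ to get a pencil of binary cubics, sort the members by factorization type, and feed the count into $\#\mE_r(\bF_q)$ via the quartic $f_L$. But two of your key steps are not correct as stated. First, the ``discriminant locus'' of the pencil is not a plane cubic curve: parametrizing the pencil by $(s,t)\in PG(1,q)$ (using the explicit generators of Corollary \ref{pencil-planes}), the discriminant of the cubic $F_{s,t}$ is a binary quartic in $(s,t)$, and in fact one computes $\Delta(F_{s,t})=f_L(s,t)$ on the nose; its zero set is the finite set of $\bF_q$-roots of $f_L$, which is what gives $|S\cap\mN_2|=i$. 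The curve $\mE_r$ does not appear as this locus but as (a smooth model of) the double cover $W^2=f_L(s,t)$, and proving that this genus-one curve is isomorphic over $\bF_q$ to $\mE_r\colon S^2=T^3+T^2-r^{-1}$, with the same $r=\jmath(f_L)$, is a genuinely nontrivial step (it is the paper's Theorem \ref{thm_elliptic}, proved by an explicit Mordell-style isomorphism, and it is exactly this that converts $\zeta_{f_L}=\#\{(s,t): f_L(s,t)\ \text{a nonzero square}\}$ into $\#\mE_r(\bF_q)/2$ up to the correction by the number of rational roots of $f_L$). You identify this as the main obstacle but describe the wrong object, so as written the reduction does not go through.

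Second, the mechanism you propose for the factors $3$ and $6$ --- ``automorphisms of $\mE_r$ in characteristic $3$'' --- will not separate $\mN_3$ from $\mN_5$. The quadratic character of $\Delta(F_{s,t})=f_L(s,t)$ only detects $\mN_4$ (by Lemma \ref{cubicr}, nonsquare discriminant is equivalent to exactly one rational root), so the planes with square discriminant could a priori lie in either $\mN_3$ or $\mN_5$ and the elliptic curve gives no direct handle on the split. The missing idea is an incidence double count: since a generic line misses $C$, each of the $q+1$ rational points of $C$ lies on exactly one plane of the pencil, giving $2|S\cap\mN_2|+3|S\cap\mN_3|+|S\cap\mN_4|=q+1$, which together with $|S|=q+1$, $|S\cap\mN_1|=0$, $|S\cap\mN_2|=\mu_L$ and $|S\cap\mN_4|=\nu_L$ is a solvable linear system; the divisions by $3$ and $6$ and the $-3i$ terms come from this system after substituting $\nu_L=q+1-(\#\mE_r(\bF_q)+\mu_L)/2$, not from any symmetry of $\mE_r$ (the divisibility of $\#\mE_r(\bF_q)$ by $3$ is only a consistency check). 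Without this counting step, or some substitute for it, your plan cannot produce the stated values of $|S\cap\mN_3|$ and $|S\cap\mN_5|$.
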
 

We remark that the points $(x,y) = (\pm r^{-1/3}, r^{-1/3})$ are flex points of the curve $\mE_r$, and hence are $3$-torsion points of $\mE_r(\bF_q)$. Therefore,  $\#\mE_r(\bF_q)$ is divisible by $3$.

\begin{thm} \label{result4}
Let $L$ be a generic line of $PG(3,q)$ and let $\mP$ denote the set of points of $L$. Let $f_L$ be the binary quartic for associated with $L$, and let $r=\jmath(f_L)$. \\
\begin{enumerate}
\item In case  $f_L \in \mF_i$ for $i \in \{1, 2, 4\}$,  we have 
\begin{enumerate}
        \item $|\mP\cap \mC|=|\mP\cap \mAx|=0$,
        \item $|\mP\cap \mT|=i$,
        \item $|\mP\cap \mRC|= \frac{\#\mE_r(\bF_q)-i}{2}$,
        \item $|\mP\cap \mIC|=q+1- \frac{\#\mE_r(\bF_q)+i}{2}$.\\
    \end{enumerate}

\item In case  $f_L \in \mF_2' \cup \mF_4'$, we have:
\begin{enumerate}
        \item $|\mP\cap \mC|=|\mP\cap \mAx|=|\mP\cap \mT|=0$,
        \item $|\mP\cap \mRC|=\frac{\#\mE_r(\bF_q)}{2}$,
        \item $|\mP\cap \mIC|=q+1-\frac{\#\mE_r(\bF_q)}{2}$.
    \end{enumerate}
    \end{enumerate}

\end{thm}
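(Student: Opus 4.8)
The plan is to transfer the computation from the projective space of lines to the space of binary quartic forms, using the $G$-equivariant correspondence $L \mapsto f_L$ already described, and then to reduce the counting of points of $L$ in each point orbit to a point-count on the elliptic curve $\mE_r$. The first step is to make the correspondence fully explicit: for a generic line $L$ with associated form $f_L$, identify the tangent line to $C$ at each point $(s^3,s^2t,st^2,t^3)$, and observe that $L$ meets this tangent line exactly when $f_L(s,t)=0$. This gives a map from the $(q+1)$ points of $L$ to a set of $(q+1)$ points, each lying on a distinct tangent line of $C$ (the genericity hypothesis and $\jmath(f_L)\neq 0$ guarantee the relevant non-degeneracies). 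The key idea is then to show that the $G$-orbit of a point $P\in L$ is controlled by an auxiliary invariant — concretely, by whether a certain quadratic (built from the coefficients of $f_L$ and the parameter of the tangent line through $P$) is zero, a non-zero square, or a non-square in $\bF_q$ — and that these three cases correspond respectively to $P\in\mT$, $P\in\mRC$, $P\in\mIC$ (with $P\in\mC$ or $P\in\mAx$ occurring only for non-generic lines, hence never here). I would extract the precise classification of the five point orbits from Proposition \ref{point_orbits}, expressed in terms of the incidence of a point with $C$, its tangent and osculating structures, so that membership in $\mT$, $\mRC$, $\mIC$ becomes a checkable algebraic condition.

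Next I would carry out the reduction to $\mE_r$. Using a normal form for $f_L$ (for each orbit listed in Table \ref{table-BQF}, or more uniformly by writing $f_L$ in terms of its $\jmath$-invariant $r$ and its type $\mF_i$ or $\mF_i'$), parametrise the tangent lines of $C$ by $\bF_q\cup\{\infty\}$ and write down, for each parameter value, the discriminant-type quantity whose quadratic character decides the orbit of the corresponding point on $L$. I expect this quantity to be, up to squares and up to the sign conventions in characteristic $3$, exactly $y^3+y^2-r^{-1}$ for an appropriate change of variable $y$, so that counting parameters at which it is a non-zero square, a non-square, or zero becomes counting the affine points of $\mE_r:\,x^2=y^3+y^2-r^{-1}$ with $y$-fibres of size $2$, $0$, $1$ respectively. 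Summing the standard identities $\#\{\text{nonzero squares}\}+\#\{\text{non-squares}\}+\#\{\text{zeros}\} = q$ (over the relevant parameter set) together with $\#\mE_r(\bF_q) = 1 + \sum_y\bigl(1+\chi(y^3+y^2-r^{-1})\bigr)$ then yields the stated formulas $|\mP\cap\mRC| = (\#\mE_r(\bF_q)-i)/2$ and $|\mP\cap\mIC| = q+1-(\#\mE_r(\bF_q)+i)/2$, with the value $i = |\mP\cap\mT|$ coming from the number of linear factors of $f_L$ over $\bF_q$ — i.e.\ from the type $\mF_i$, and $i=0$ in the types $\mF_2',\mF_4'$. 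The book-keeping for the point at infinity and for the repeated-factor-free hypothesis ($\Delta(f_L)\neq 0$) must be done carefully, but it only shifts indices by small constants and is consistent with the claim that $\#\mE_r(\bF_q)$ is divisible by $3$.

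The main obstacle, I expect, is establishing the dictionary in the first paragraph: proving that the $G$-orbit of a point $P$ on a generic line $L$ is detected precisely by the quadratic character of that one explicit quantity, and that the cases $\mC,\mAx$ genuinely do not arise. This requires a somewhat delicate local analysis at a point of $L$ lying on the tangent line of $C$ at a parameter $(s,t)$ — distinguishing a true tangent point, a point on a real chord, and a point on an imaginary chord — and matching it against the intrinsic description of the five point orbits in characteristic $3$. Once that local classification is in place, the rest is the elliptic-curve point-count sketched above, which is mechanical. A secondary subtlety is uniformity across the five types $\mF_1,\mF_2,\mF_4,\mF_2',\mF_4'$: I would prefer to run the argument once in terms of $r$ and the factorisation type rather than orbit-by-orbit through Table \ref{table-BQF}, checking at the end that the parameter $i$ is read off correctly (namely $i$ equals the number of $\bF_q$-rational tangent points of $C$ incident with $L$, which is the number of linear factors of $f_L$).
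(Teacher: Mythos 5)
Your overall strategy is the same as the paper's (exclude $\mC,\mAx$ by genericity, decide $\mT/\mRC/\mIC$ for each point of $L$ by the quadratic character of one explicit quantity, then convert the square/nonsquare/zero count into $\#\mE_r(\bF_q)$), but the two steps you defer are exactly the substantive content, and one of them is mis-stated. First, the parametrization: you claim the $(q+1)$ points of $L$ each lie on a distinct tangent line of $C$; this is false — a generic line meets the tangent surface only at the at most $4$ parameters with $f_L(s,t)=0$, and points of $\mRC\cup\mIC$ lie on no tangent line at all. The correct parametrization, used in the paper, is by osculating planes: since $L$ misses the axis, each point of $L$ lies on exactly one osculating plane $O_{(s,t)}$, and Lemma \ref{intersection} gives that point $w(s,t)$ explicitly in terms of the Pl\"ucker coordinates. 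The dictionary you name as ``the main obstacle'' is then supplied in the paper by the covariant quadratic $P_w$ of Lemma \ref{tangential}, whose factorization type detects $\mT\cup\mAx$, $\mRC$, $\mIC$, together with the computation $\Delta(P_{w(s,t)})=f_L(s,t)^3$, so that the square class of $f_L(s,t)$ decides the orbit of $w(s,t)$. Without this identity (or an equivalent), none of the claimed counts $|\mP\cap\mT|=i$, $|\mP\cap\mRC|=\zeta$, $|\mP\cap\mIC|=q+1-i-\zeta$ is established; your proposal acknowledges the gap but does not close it.

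Second, the passage to $\#\mE_r(\bF_q)$ is not ``mechanical'' in case (2). Your claim that, after a change of variable, the deciding quantity is $y^3+y^2-r^{-1}$ up to squares is correct only when $f_L$ has a linear factor over $\bF_q$: there one may replace $f_L$ by the normal form $E_r$ (the count $\zeta$ is a $G$-orbit invariant because $z_2$ transforms by a square) and count fibres directly, obtaining $(\#\mE_r(\bF_q)-i)/2$. When $f_L\in\mF_2'\cup\mF_4'$ no such substitution exists — it would force a rational root of $f_L$ — and what is actually needed is that the number of $(s,t)\in PG(1,q)$ with $f_L(s,t)$ a nonzero square equals $\#\mE_r(\bF_q)/2$. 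In the paper this is Theorem \ref{thm_elliptic}, whose proof constructs an explicit $\bF_q$-isomorphism (following Mordell) between a nonsingular model of the quartic curve $X^2W^2=f_L(X,Y)$ and the Weierstrass curve $\mE_r$, after first normalizing $f_L$ so that $f_L(0,1)$ is a square. This step is also what produces the difference between the formulas $(\#\mE_r(\bF_q)-i)/2$ and $\#\mE_r(\bF_q)/2$ in the two cases; your sketch, as written, would only yield case (1).
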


The rest of the paper is organized as follows. In \S \ref{two} we describe the geometric setup of the problem, and we obtain the equivalence between Problem \ref{mainproblem} and  the problem of classifying $G$-orbits of binary quartic forms whose discriminant and $\jmath$-invariant are non-zero. In \S \ref{three}  we prove Theorem \ref{result2}. In \S \ref{four} we prove  Theorem \ref{thm_main}. In \S \ref{five}, we obtain a result (see Theorem \ref{thm_elliptic}), needed for our solution of Problems \ref{prob1} and \ref{prob2}.  In \S\ref{six} and \S\ref{seven}, we prove Theorems \ref{result3} and \ref{result4}, respectively.

% A line $L$ is said to be \emph{generic} if it does not  intersect  $C \cup \mA$., nor lies in any of its osculating planes,i.e., does not meet the axis. 
% Our approach towards the problem is to consider the ways in which a line of $PG(3,q)$ can meet the tangent lines of the twisted cubic $C$. In Section \S \ref{two}, we reduce the problem to another problem: 
\section{Geometric Setup} \label{two}
Let $V$ denote the vector space $F^2$ over a field $F$, and let  $e_1=(1,0), e_2=(0,1)$ 
denote the standard basis of $V$. Let $X, Y$ denote the dual basis for the dual vector space $V^*$. The group $GL_2(F)$ acts on $V$, by $g (e_1)=ae_1+ce_2$ and $g(e_2)=b e_1+ de_2$ for $g =\bbsm a & b\\ c&d \besm \in GL_2(F)$. The action of $GL_2(F)$ on the dual vector space is given by $g(X)=(dX-bY)/\text{det}(g)$, $g(Y)=(aY-cX)/\text{det}(g)$. Let Sym$^m(V^*)$ denote the $m$-th symmetric power of $V^*$. In terms of the basis $X^{m-i}Y^i, i=0,\dots,m$, we realize Sym$^m(V^*)$ as the  vector space of degree $m$ homogeneous polynomials (or binary forms) $f(X,Y)$  in $X,Y$. The action of $GL_2(F)$ on Sym$^m(V^*)$ is given by 
\[  g_m f(X,Y)=\det(g)^{-m} f(dX-bY, aY-cX). \]

Let $D_mV$ denote the dual vector space to Sym$^m(V^*)$ and let $B_0, \dots, B_m$ denote the  basis  of $D_mV$ dual to the basis $X^{m-i}Y^i, i=0,\dots,m$ of Sym$^m(V^*)$. For $g \in GL_2(F)$, we denote by $g^{[m]}$, the action of $GL_2(F)$  on $D_mV$ dual to the action $g_m$ on Sym$^m(V^*)$. For $(s,t) \in V$, the evaluation  $f \mapsto f(s,t)$ gives a map $\nu_m: V \to D_mV$ given by $\nu_m(s,t) = s^m B_0 +s^{m-1}t B_1+ \dots+ t^m B_m$. At the projective level we get the Veronese map
\[\nu_m: \bP(V) \hookrightarrow \bP(D_mV), \quad (s,t)\mapsto \sum_{i=0}^m s^{m-i}t^i B_i.\]
The image of $\nu_m$ is the degree $m$ rational normal curve (in short RNC) $C_m$. 
By  construction, 
\[ \label{eq:nu_equivariance}  \nu_m(g \cdot v) = g^{[m]} \nu_m(v), \]
or more explicitly $\nu_m(as+bt,cs+dt)=g^{[m]} (s^{m}, s^{m-1}t, \dots, s t^{m-1}, t^{m})$. \\

For the rest of this section, we assume char$(F)=3$. We denote the curve $C_3$ as just $C$. We record the matrices $g^{[3]}$ and $g_4$ for $g =\bbsm a & b\\ c&d \besm \in GL_2(F)$, in the case when  char$(F)=3$:

\beq \label{eq:g_3}
g^{[3]}=\bbm a^3 &0 &0&b^3\\
a^2c &a(ad-bc) &-b(ad-bc)&b^2d\\
ac^2 &-c(ad-bc)&d(ad-bc)&bd^2\\
c^3 &0 &0&d^3\bem, 
 \eeq
\beq \label{eq:g_4}
 g_4=\tfrac{1}{\text{det}(g)^4} \bbm d^4 &-d^3c&d^2c^2&-dc^3&c^4\\
-d^3b &d^3a & dc(bc+ad)&c^3b&-c^3a\\
0 & 0& (ad-bc)^2&0&0\\
-b^3d &b^3c& ab(bc+ad) &a^3d&-c a^3\\
b^4 &-b^3a&a^2b^2&-b a^3&a^4\bem. \eeq

It follows from \eqref{eq:g_3} that the line $\mA$ of $\bP(D_3V)$ spanned by $B_1$ and $B_2$ is $PGL_2(F)$-invariant. The osculating plane $O_{(s,t)}$ to $C$ at a point $\nu_3(s,t)$ consists of points (see \cite[eq. 21.4]{Hirschfeld3}) \[ O_{(s,t)}=\{(y_0,y_1,y_2,y_3) \colon y_0 t^3-3 t^2s y_1 +3 ts^2 y_2 -s^3 y_3 =0\}.\]  Since char$(F)=3$, we have 
\[O_{(s,t)}=\{(y_0,y_1,y_2,y_3) \colon y_0 t^3 -s^3 y_3 =0\}.  \]
We note that $O_{(s,t)}$ is the join of $\mA$ and $\nu_3(s,t)$,  and that  $\mA$ is the line of intersection of any two  and hence all osculating planes of $C$. The line $\mA$ is called the \emph{axis}. It also follows that a line $L$ of $\bP(D_3V)$ meets $\mA$ if and only if $L$  is contained in some osculating plane of $C$.\\

We also note from \eqref{eq:g_4} that the linear subspace  of $\text{Sym}^4(V^*)$ consisting of those quartic forms for which the coefficient of $X^2Y^2$ is zero, is a $PGL_2(F)$-invariant subspace. \\

The basis of the second exterior power $\wedge^2 D_3V$ associated with the basis $B_0, \dots, B_3$ of $D_3V$ is $B_{ij} = B_i \wedge B_j, 0 \leq i <j \leq 3$. The Pl\"ucker coordinates with respect to this basis are denoted by $p_{ij}, 0 \leq i < j \leq 3$. It will be convenient to use the basis
\[ E_0=B_{23}, E_1=B_{13}, E_2=B_{03}, E_3=B_{02}, E_4=B_{01}, E_5=B_{12},\]
 and let  \[(z_0, z_1,z_2,z_3,z_4,z_5)=(p_{23}, p_{13}, p_{03}, p_{02}, p_{01}, p_{12})\] be the coordinates with respect to the basis $E_0, \dots, E_5$. The lines of $\bP(D_3V)$ are given by the Klein quadric 
\[ \mQ: p_{01}p_{23}-p_{02}p_{13}+p_{03}p_{12}=0, \;\text{equivalently } z_0z_4 -z_1z_3 + z_2z_5=0. \]

The induced action of $GL_2(q)$ on $\wedge^2 D_3V$ is represented by the matrix
\[   \det(g)  \, \left( \begin{array}{c|c} 
  \det(g)^4  g_4 &  0    \\   \hline 
    b^2d^2 \text{ } bd(ad+bc) \text{ } abcd \text{ } ac(ad+bc) \text{ } a^2c^2 & \det(g)^2
\end{array} \right), \] 
with respect to the basis $E_0, \dots ,E_5$.  From the form of this matrix, the following two observations are clear. Let $\hat A= F \cdot E_5$ denote the one dimensional subspace of $\wedge^ 2 D_3V$ corresponding to the point $\mA=E_5$ of $\bP(\wedge^ 2 D_3V)$. (We  use the same symbol $\mA$ for the axis in $\bP(D_3V)$ and the corresponding point $E_5$ on the Klein quadric $\mQ$ in $\bP(\wedge^2 D_3V)$ under the Klein correspondence. The intended meaning of $\mA$ will be clear from the context in which it appears.)
\begin{enumerate}
    \item As observed above the point  $\mA$ of $\bP(\wedge^ 2 D_3V)$ is fixed by $PGL_2(F)$.  It represents the common line of intersection of all osculating planes of $C$. %It is denoted the orbit $\mO_7$.
\item The quotient vector space 
\[(\wedge^2 D_3V)/\hat A  \simeq  (\wedge^2V)^{\otimes 5} \otimes \text{Sym}^4(V^*), \]
as $GL_2(F)$-modules. In particular, at the projective level we get a $PGL_2(F)$-equivariant isomorphism
\[ \bP\left(  (\wedge^2 D_3V)/\hat A \right) \simeq  \bP( \text{Sym}^4(V^*) ),\]
\[\sum_{i=0}^5 z_i E_i \mapsto z_0 X^4 +z_1 X^3Y+ z_2 X^2Y^2 + z_3 XY^3 +z_4 Y^4. \]
Here $\bP\left(  (\wedge^2 D_3V)/\hat A\right)$ is the set of lines of $\bP (\wedge^2 D_3V)$ passing through $\mA$.
In this way we assign a binary quartic form $f_L(X,Y)$ to each line $L \neq \mA$ of $\bP(D_3V)$.\\ 
\end{enumerate}

The tangent hyperplane to $\mQ$ at $\mA$  is $\Pi_{\mA}: z_2=0$. Let $\mQ_0=\Pi_{\mA} \cap \mQ$. In the Klein correspondence between points of $\mQ$ and  the lines of $\bP(D_3V)$, the points of $\mQ_0$ are precisely the set of lines intersecting the axis $\mA$. We note that $\mQ_0$ in $\Pi_{\mA}$ is the cone with vertex $\mA$ and whose base is the hyperbolic quadric $z_0z_4-z_1z_3=0$ in the solid $z_2=z_5=0$.
Let $\mQ_1=\mQ \setminus \mQ_0$, which represents the set of lines of $\bP(D_3V)$ not intersecting $\mA$.
We note that distinct points of $\mQ_1$ have distinct images in $\bP\left(  (\wedge^2 D_3V)/\hat A \right)$: given two distinct points $(z_0, \dots, z_4, (z_1z_3-z_0z_4)/z_2)$ and $(z_0', \dots, z_4', (z_1'z_3'-z_0'z_4')/z_2')$  of $\mQ_1$, the point $\mA$ does not lie on the line joining these these two points of $\mQ_1$.  Let $\iota: \mQ_1 \hookrightarrow \bP\left(  (\wedge^2 D_3V)/\hat A \right)$ denote this injective map. We note that the $\iota$ is also $PGL_2(F)$-equivariant, because $\iota(P)$ is the line joining $\mA$ and $P$, and is carried by $g \in PGL_2(F)$ to the line joining $\mA$ and $g \cdot P$, which is $\iota(g \cdot P)$. Composing $\iota$ with the $PGL_2(F)$-equivariant isomorphism $\bP\left(  (\wedge^2 D_3V)/\hat A \right) \simeq  \bP( \text{Sym}^4(V^*) )$ above, we conclude that the map $\mQ_1 \to \bP( \text{Sym}^4(V^*) )$ given by $L \mapsto f_L$ gives a $PGL_2(F)$-equivariant isomorphism between $\mQ_1$ and the set of quartic forms $f$ with $z_2(f) \neq 0$. \begin{prop} \label{correspondence}
    The $PGL_2(F)$-orbits of the set of lines of $\bP(D_3V)$ not meeting the axis $\mA$, are in bijective correspondence under the map  $L \mapsto f_L$, with the $PGL_2(F)$-orbits of binary quartic forms $f$ with $z_2(f) \neq 0$.

Further, the $PGL_2(F)$-orbits of the set of generic lines of $\bP(D_3V)$ are in bijective correspondence under the map  $L \mapsto f_L$, with the $PGL_2(F)$-orbits of binary quartic forms $f$ with $j(f) \neq 0$.
\end{prop}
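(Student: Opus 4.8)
The plan is to leverage the $PGL_2(F)$-equivariant machinery already assembled in this section, so that the proposition becomes a matter of assembling pieces rather than new computation. For the first assertion, recall that $\mQ_1$ — the set of lines of $\bP(D_3V)$ not meeting $\mA$ — has already been identified, via the injection $\iota$ followed by the equivariant isomorphism $\bP\bigl((\wedge^2 D_3V)/\hat A\bigr) \simeq \bP(\mathrm{Sym}^4(V^*))$, with the set of binary quartic forms $f$ satisfying $z_2(f) \neq 0$. Both maps were shown to be $PGL_2(F)$-equivariant, hence their composite $L \mapsto f_L$ is an equivariant bijection between $\mQ_1$ and $\{f : z_2(f) \neq 0\}$. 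An equivariant bijection induces a bijection on orbit sets, which is precisely the first claim; I would spell this out in one or two lines.

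For the second assertion I would first argue that a generic line of $\bP(D_3V)$ is automatically a line not meeting $\mA$, i.e. lies in $\mQ_1$: indeed it was noted above that a line $L$ meets the axis $\mA$ if and only if $L$ is contained in some osculating plane of $C$, and generic lines by definition lie in no osculating plane. So the correspondence $L \mapsto f_L$ of the first part restricts to the generic lines. It then remains to characterize, among the forms $f$ with $z_2(f) \neq 0$, exactly those that arise from generic lines, and to show this locus is $\{f : z_2(f) \neq 0,\ j(f) \neq 0\}$, equivalently (since $z_2 \neq 0$ forces $z_2^6 \neq 0$) exactly $\{f : j(f) \neq 0\}$ — note $j(f) = z_2(f)^6/\Delta(f)$ is nonzero precisely when $z_2(f) \neq 0$ and $\Delta(f) \neq 0$. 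The substantive point is therefore: a line $L \in \mQ_1$ is generic if and only if $L$ does not meet $C$, and $L$ meets $C$ if and only if $\Delta(f_L) = 0$. The first equivalence is again the definition of generic combined with $L \in \mQ_1$ (it already avoids osculating planes). For the second, I would use the geometric description given in the introduction: $L$ meets the tangent line to $C$ at $\nu_3(s,t)$ precisely when $(Xt - Ys) \mid f_L(X,Y)$; more carefully, I would verify via the Klein/Plücker coordinates that $L$ meets $C$ itself exactly when $f_L$ has a linear factor over $\overline{\bF_q}$ with multiplicity reflecting the order of contact, and conclude that $L$ meets $C$ (in the scheme-theoretic sense, over $\overline{\bF_q}$) iff $f_L$ has a repeated root iff $\Delta(f_L) = 0$.

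The main obstacle I anticipate is this last step — pinning down precisely the dictionary between "$L$ meets $C$" and "$f_L$ has a repeated factor," including the subtle cases where $L$ is tangent to $C$ or passes through a point of $C$ with higher-order contact, since these correspond to repeated roots of $f_L$ of various multiplicities and one must check the discriminant detects all of them over $\overline{\bF_q}$, not just over $\bF_q$. I would handle this by working over $\overline{\bF_q}$, using the explicit parametrization $(s,t) \mapsto (s^3, s^2 t, s t^2, t^3)$ of $C$ and the explicit form of $f_L$ in terms of the Plücker coordinates $z_0,\dots,z_4$ of $L$, reducing the claim to the classical fact that $\Delta(f)$ is (up to a unit) the discriminant of the binary quartic and hence vanishes iff $f$ has a repeated root in $\bP^1(\overline{\bF_q})$. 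Once that is in place, combining "generic $=$ (in $\mQ_1$) and (not meeting $C$)" with "not meeting $C \iff \Delta(f_L) \neq 0$" and with the first part of the proposition gives the equivariant bijection between generic lines and $\{f : z_2(f)\neq 0,\ \Delta(f) \neq 0\} = \{f : j(f) \neq 0\}$, and passing to orbit sets finishes the proof.
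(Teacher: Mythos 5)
Your proposal is correct and follows essentially the same route as the paper: the first assertion is exactly the already-established equivariant bijection $\mQ_1 \simeq \{f : z_2(f)\neq 0\}$ passed to orbit sets, and the second rests on the same key dictionary, namely that for a line $L$ not meeting $\mA$ one has ``$L$ meets $C$ (over $\overline{F}$) if and only if $f_L$ has a repeated factor,'' which the paper proves by the very Pl\"ucker-coordinate computation you sketch (moving the intersection point to $\nu_3(0,1)$ so that $f_L=X^2(\beta X^2+\alpha XY+Y^2)$, and conversely). Your minor reorganization --- using directly that meeting $\mA$ is equivalent to $z_2(f_L)=0$ rather than the paper's auxiliary characterization via a linear factor of multiplicity at least $3$ --- does not change the substance of the argument.
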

\bep  The first assertion has just been proved above.
For the second assertion, we first note that the condition that $ \jmath(f_L) \neq 0$ is equivalent to $z_2(f_L)$ and $\Delta(f_L)$ both being nonzero. Next,  we show that a line $L \neq \mA$  of $\bP(D_3V)$ intersects $\mA$, if and only if $f_L$ has a linear factor of multiplicity at least $3$:  If $L$ meets $\mA$,  we can write $f_L= X^3(z_0X+z_1Y)+(z_3X+z_4Y)Y^3$ because $z_2(f_L)=0$. Using  $z_2=0$ in  the equation $z_0z_4 -z_1z_3 + z_2z_5=0$, we get  $z_0z_4-z_1z_3=0$. This implies that the linear forms $(z_0X+z_1Y)$ and $(z_3X+z_4Y)$ are dependent, and hence $f_L(X,Y)$ is of the form $\psi_1 \cdot \psi_2^3$ for some linear forms $\psi_1, \psi_2$ over $F$. Conversely, if $f_L$ has a linear form of multiplicity at least $3$, then  $f_L$ is of the form $(Xt-Ys)^3(\alpha X+ \beta Y)=(X^3t^3-Y^3s^3)(\alpha X+ \beta Y)$ which clearly has $z_2(f_L)=0$.

Therefore $L$ does not meet the axis $\mA$,  if and only if each linear factor of $f_L$ has multiplicity at most $2$.
We will now show that a quartic form $f_L$ with $z_2(f_L) \neq 0$ has a linear factor of  multiplicity $2$ if and only if  $L$ meets $C$. This will conclude the proof that generic lines of $\bP(D_3V)$ (that is, lines not meeting $C \cup \mA$) are precisely those lines for which $ j(f_L) \neq 0$, and hence $PGL_2(F)$-orbits of generic lines are in bijective correspondence under $L \mapsto f_L$ with $PGL_2(F)$-orbits of forms with $j(f_L) \neq 0$.

If $L$ passes through $\nu_3(s,t)$ but is not contained in the osculating plane $O_{(s,t)}$ (that is, $L$ does not intersect $\mA$), then replacing $L$ by $g \cdot L$  where $g \cdot (s,t)=(0,1)$, $g\in PGL_2(F)$, we may assume $L$ is generated by $(0,0,0,1)$ and $(1,\alpha, \beta, 0)$ for some $\alpha, \beta \in F$. Therefore, $L$ has  coordinates 
\[(z_0, z_1,z_2,z_3,z_4,z_5)=(p_{23}, p_{13}, p_{03}, p_{02}, p_{01}, p_{12})=(\beta, \alpha, 1,0,0,0).\] Thus $f_L=X^2(\beta X^2+\alpha XY+Y^2)$, has $X$ as a linear factor of multiplicity exactly $2$. We conclude that a line $L$ passes through $\nu_3(s,t)$ and does not meet $\mA$, has the property that $(Xt-Ys)$ is a factor of $f_L$  of multiplicity exactly $2$. 
Conversely, if $f_L=X^2(\beta X^2+\alpha XY+Y^2)$, then $p_{01}=p_{02}=p_{12}=0$ and $p_{03}=1,p_{23}=\beta,p_{13}=\alpha$ which is equivalent to $L$ being generated by the two points $(0,0,0,1)$ and $(1,\alpha,\beta,0)$ of $\bP(D_3V)$. Thus  
$(Xt-Ys)$ is a factor of $f_L$  of multiplicity exactly $2$, if and only if $L$ passes through $\nu_3(s,t)$ but does not meet $\mA$. We  conclude that  $\Delta(f_L)=0$ and $z_2(f_L) \neq 0$ if and only if $L$ intersects $C$ but not $\mA$.
\eep

% of $\bP(\wedge^2 D_3(V))$ lying on the solid $z_2=z_5=0$. Let $P$ be a point on $\mH$ with coordinates $(z_0,z_1,0,z_2,z_3,0)$, then the line joining the points $P$ and $\mA$ lies on $\mQ$. The $q(q+1)^2$ points other than $\mA$ on the $(q+1)^2$ lines joining the points of $\mH$ and $\mA$ are the lines of $\bP(D_3V)$ those meet the axis.
The quartic form $f_L$ associated to a line $L \neq \mA$ also has the following geometric interpretation:
\begin{lem}
    Let $L$ be a line of $\bP(D_3V)$ other than the axis $\mA$, and $f_L(X,Y)$ be the binary quartic form associated with $L$. Let $\overline{F}$ denote an algebraic closure of $F$. The line  $L$ meets the tangent line to $C(\overline{F})$ at a point $\nu_3(s,t)=(s^3,s^2t,st^2,t^3)$   if and only if $f_L(s,t)=0$.
\end{lem}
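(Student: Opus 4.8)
The plan is to translate the incidence ``$L$ meets the tangent line $T_{s,t}$ to $C(\overline F)$ at $\nu_3(s,t)$'' into the statement that the Pl\"ucker points of $L$ and $T_{s,t}$ are conjugate with respect to the bilinear form polarizing the Klein quadric $\mQ$, and then to recognize this conjugacy as the equation $f_L(s,t)=0$. As the conjugacy condition is a polynomial equation in $s,t$ (with the Pl\"ucker coordinates of $L$ as constants), it suffices to verify it as an identity in $F[s,t]$, so letting $(s,t)$ range over $\overline F$ rather than $F$ costs nothing.

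\emph{The tangent line.} Differentiating the parametrization $(s,t)\mapsto(s^3,s^2t,st^2,t^3)$ and using $\mathrm{char}(F)=3$, one finds that $\partial_s\nu_3$ and $\partial_t\nu_3$ are both scalar multiples of $(0,s,-t,0)$, a point lying on the axis $\mA$. Hence the tangent line $T_{s,t}$ is the join of $\nu_3(s,t)=(s^3,s^2t,st^2,t^3)$ and $(0,s,-t,0)$; these two points are independent for every $[s:t]\in\bP(V)$, so this is a genuine line. In particular every tangent line of $C$ meets $\mA$, in agreement with the fact recorded above that $\mA$ is the common line of all osculating planes of $C$.

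\emph{The Pl\"ucker bookkeeping.} From the two spanning vectors one computes that, in the coordinates $(z_0,\dots,z_5)=(p_{23},p_{13},p_{03},p_{02},p_{01},p_{12})$, the Pl\"ucker vector of $T_{s,t}$ is a scalar multiple of
\[ z^{T_{s,t}}=(t^4,\ -st^3,\ 0,\ -s^3t,\ s^4,\ s^2t^2), \]
which indeed lies on $\mQ$ and has $z_2^{T_{s,t}}=0$, reflecting that $T_{s,t}$ meets $\mA$. Two lines of $\bP(D_3V)$ meet if and only if their Pl\"ucker points are conjugate for the form polarizing $\mQ$, i.e. (since $\mathrm{char}(F)\neq2$) if and only if $z_0z_4'+z_0'z_4-z_1z_3'-z_1'z_3+z_2z_5'+z_2'z_5=0$. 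Substituting $z'=z^{T_{s,t}}$ and $z=(z_0,\dots,z_5)$ the Pl\"ucker vector of $L$, the summand containing $z_5$ vanishes because $z_2^{T_{s,t}}=0$, and the remaining five summands add up to $z_0s^4+z_1s^3t+z_2s^2t^2+z_3st^3+z_4t^4$, which is precisely $f_L(s,t)$ by the definition of the quartic form attached to $L$. This is the desired equivalence.

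The only step carrying genuine content is identifying $T_{s,t}$: in characteristic $3$ the derivative of the twisted-cubic parametrization degenerates onto the axis $\mA$, and it is exactly this collapse that makes the lemma a characteristic-$3$ phenomenon. The other two steps are the routine Pl\"ucker computations already familiar from this section.
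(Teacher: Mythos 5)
Your proof is correct and follows essentially the same route as the paper: compute the Pl\"ucker coordinates of the tangent line at $\nu_3(s,t)$ (your vector $(t^4,-st^3,0,-s^3t,s^4,s^2t^2)$ is exactly the paper's $(p_{01},\dots,p_{23})=(s^4,-s^3t,0,s^2t^2,-st^3,t^4)$ after reordering) and then apply the standard bilinear incidence condition on the Klein quadric, which collapses to $f_L(s,t)=0$. The only difference is that you derive the tangent line's coordinates by differentiating the parametrization in characteristic $3$, whereas the paper simply states them and cites Hirschfeld's incidence lemma.
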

\begin{proof}
    The Pl\"ucker coordinates of a tangent line $T$ at  $\nu_3(s,t)=(s^3,s^2t,st^2,t^3)$ of $C(\overline{F})$ is $(s^4,-s^3t,0,s^2t^2,-st^3,t^4)$. By \cite[Lemma 15.2.2]{Hirschfeld3}, a line $L$ with Pl\"ucker coordinates $(p_{01},p_{02},p_{03},p_{12},p_{13},p_{23})$ intersects $T$ if and only if $s^4p_{23}+s^3tp_{02}+p_{03}s^2t^2+st^3p_{02}+t^4p_{01}=0$, or equivalently
    \[z_0s^4+z_1s^3t+z_2s^2t^2+z_3st^3+z_4t^4=f_L(s,t)=0.\] 
\end{proof}

%\textcolor{blue}{The points of the quotient projective space $\bP\left(  (\wedge^2 D_3V)/\mA \right)$ are the lines through $\mA$ in $\bP(\wedge^2 D_3V)$. 
%Through $\mA$, there are $(q+1)^2$ lines which are contained in Klein quadric corresponding to $q^3+2q^2+q+1$ lines in $\bP(D_3V)$: $(q+1)$ of them tangents $\mO_2$, $q^2+q$ of them nontangent unisecants lying on osculating planes $\mO_4$ and $(q+1)(q^2-1)$ external lines lying on osculating planes $\mO_8$ (corresponds to lines in section $4.1$).\\
%There are $q^3-q$ lines through $\mA$ in  $\bP(  (\wedge^2 D_3V)$ contain no other points of the Klein quadric other than $\mA$. These points of $\bP\left(  (\wedge^2 D_3V)/\mA \right)$ corresponds to the binary quartic forms which do not corresponds to any line of $\bP(D_3V)$.\\
%There are $q^4$ lines through $\mA$ in $\bP(\wedge^2 D_3V)$ each contains exactly one point of the Klein quadric other than $\mA$. These points corresponding to the binary quartic forms which corresponds to the lines in section $4.2$.}

%\textcolor{blue}{The point $\mA$ lies on the Klein quadric and the tangent hyperplane at $\mA$ is $z_2=0$. So the lines which meets the axis are corresponds to the quartic forms for which $z_2=0$.}

\section{$G$-orbits of binary quartic forms} \label{three}
 We begin by fixing some notation that will be used throughout the rest of the paper:
\begin{description}
\item [$\overline{\bF_q}$] is  an algebraic closure of $\bF_q$,
\item[$G$] equals $PGL_2(q)$,
\item[$\phi$] is the Frobenius map $\phi(x)=x^q$ on $\overline{\bF_q}$,
\item[$\ep$] is a fixed quadratic non-residue in $\bF_q$,
\item[$\bF_q^\times$] is the multiplicative group of $\bF_q\setminus\{0\}$,
\item[$\gamma$] is  a generator of the cyclic group $\bF_q^\times$,
\item[$(\bF_q^\times)^2$] denotes the squares in $\bF_q^\times$ %(and not a Cartesian product),
\item[Tr$_{\bF_{q^2}/\bF_q}(x)$] equals $x+x^q$ for $x \in \bF_{q^2}$,
\item[$N_{K/F} : K^\times \to F^\times $] is the norm function of a finite field extension $K/F$.
% equals $x \phi(x)$ for $x \in \bF_{q^2}$,
%  \item[$N_{\bF_{q^4}/\bF_{q^2}}(x)$] equals $x  \phi^2(x)$ for $x \in \bF_{q^4}$,
\end{description}
For a binary quartic form
\[ f(X,Y) =   z_0 X^4 +z_1 X^3Y+ z_2 X^2Y^2 + z_3 XY^3 +z_4 Y^4, \]
we recall (see \cite[Lemma 1.18(iii)]{Hirschfeld2}) that the discriminant of $f$ is 
\beq \label{eq:Delta_def} \Delta(f) = (z_0z_4-z_1z_3)^3+(z_0z_4-z_1z_3)^2 z_2^2 - z_2^3(z_0z_3^2+z_4z_1^2-z_0z_2z_4).\eeq
The discriminant $\Delta(f)$ vanishes if and only if $f(X,Y)$ has repeated factors. We say that $y/x \in \overline{\bF_q} \cup \{\infty\}$ is a root of $f$  if  $(Xy-Yx)$ is a factor of $f(X,Y)$ over $\overline{\bF_q}$. 
 \begin{definition} \label{Fi_def} 
 Let  $\mF^0$ denote the set  of forms with $\Delta(f)=0$. The forms with $\Delta(f) \neq 0$ decompose into the parts $\mF_1 \cup \mF_2 \cup \mF_2' \cup \mF_4 \cup \mF_4'$ as  defined in section \ref{statement}.
 % Given $f(X,Y) \in \bP(\mathrm{Sym}^4(V^*))$, let $F \supset \bF_q$ denote the splitting field of $f(X,Y)$. We decompose $\bP(\mathrm{Sym}^4(V^*))$ as  for $i=1,2,4$ consist of forms $f$ with $\Delta(f) \neq 0$ and having  exactly $i$ linear factors defined over $\bF_q$. The sets $\mF_4'$ and $\mF_2'$ consist of forms $f$ with $\Delta(f) \neq 0$,  having no linear factors over $\bF_q$, and whose splitting field $F$ is $\bF_{q^2}$ and $\bF_{q^4}$.
\end{definition}
We note the sizes of the parts in the decomposition above of $\bP(\mathrm{Sym}^4(V^*))$:
 \begin{enumerate} 
\item   $|\mF_{4}|=\tbinom{q+1}{4}=\tfrac{(q-2)|G|}{24}$ is the number of ways of picking $4$ distinct linear forms  over $\bF_q$.
\item $|\mF_4'|=\tbinom{ (q^2-q)/2 }{2}= \tfrac{(q-2)|G|}{8}$ is the number of ways to pick two distinct irreducible quadratic forms over $\bF_q$.
\item  $|\mF_{2}|=\tbinom{q+1}{2}\tfrac{q^2-q}{2}=\tfrac{q|G|}{4}$ is the number of ways of picking $2$ distinct linear forms  and an irreducible quadratic form over $\bF_q$.
\item $|\mF_2'|=\tfrac{q^4-q^2}{4}= \tfrac{q |G|}{4}$ is the number of irreducible quartic forms over $\bF_q$.
\item  $|\mF_{1}|=(q+1)\tfrac{q^3-q}{3}=\tfrac{(q+1)|G|}{3}$ is the number of ways of picking a linear form and an irreducible cubic form over $\bF_q$.
\item  From the above parts, we see that   $|\mF_1|+|\mF_2|+|\mF_2'|+|\mF_4|+|\mF_4'|$ equals $(q^4-q^2)$. Therefore, $|\mF^0|=(q^3+2q^2+q+1)$.
\end{enumerate}

\subsection{$G$-orbits of forms with $\Delta(f)=0$}
\begin{lem} \label{disc_zero_forms}
    The set $\mF^0$ of size $(1+q+2q^2+q^3)$  decomposes into the following $G$-orbits:
 \begin{enumerate}
\item $G \cdot X^4$ of size $(q+1)$,
\item $G \cdot X^3Y$ of size $q(q+1)$,
\item $G \cdot (X^2Y^2)$ of size $q(q+1)/2$,
\item  $G \cdot X^2Y(Y-X)$ of size $|G|/2$,
\item $G \cdot X^2(Y^2-\ep X^2)$ of size  $|G|/2$,
\item  $G \cdot (Y^2-\ep X^2)^2$ of size $q(q-1)/2$.
\end{enumerate}
\end{lem}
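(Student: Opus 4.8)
The plan is to classify binary quartic forms $f$ with $\Delta(f)=0$ by their root structure over $\overline{\bF_q}$ and then match each configuration to the $PGL_2(q)$-action on $\bP(1,q)$. Since $\Delta(f)=0$ exactly when $f$ has a repeated root, the possible root multiplicity patterns of a nonzero quartic are: a single root of multiplicity $4$; multiplicities $3+1$; $2+2$; and $2+1+1$. I would treat these cases in turn, in each case normalizing using the triple transitivity of $G=PGL_2(q)$ on $\bP(1,q)$.

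\emph{Multiplicity $4$.} Here $f = \ell^4$ for a linear form $\ell$; the form is determined by the single root in $\bP(1,q)$, so $G$ acts transitively and the orbit is $G\cdot X^4$ of size $q+1$. \emph{Multiplicity $3+1$.} Here $f$ is determined by an ordered pair of distinct points of $\bP(1,q)$ (the triple root and the simple root). By $2$-transitivity of $G$ on $\bP(1,q)$ this orbit is transitive; its representative is $X^3Y$ and its size is $(q+1)q$. \emph{Multiplicity $2+1+1$.} Here there is the double root and an unordered pair of simple roots. Over $\bF_q$ the two simple roots are either both rational or form a conjugate pair from $\bF_{q^2}$. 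In the first case, $3$-transitivity of $G$ on $\bP(1,q)$ makes the configuration of three distinct rational points unique up to $G$, giving orbit $G\cdot X^2Y(Y-X)$; its stabilizer is the stabilizer of an ordered triple, which has order $2$ (it can swap the two simple roots), so the orbit size is $|G|/2$. In the conjugate-pair case, we normalize the rational double root to $\infty$ (i.e. make $X$ the corresponding linear factor) using transitivity, and then the residual stabilizer of that point acts on the remaining conjugate pair; the representative is $X^2(Y^2 - \ep X^2)$, and a Galois-cohomology / direct count shows the stabilizer again has order $2$, so the orbit has size $|G|/2$. \emph{Multiplicity $2+2$.} Here we have an unordered pair of distinct double roots; again either both rational or a conjugate pair. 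If both rational, $2$-transitivity makes the pair unique, giving $G\cdot (X^2Y^2)$ with stabilizer of order $2$ coming from the swap together with... — more precisely the stabilizer of an unordered pair of points of $\bP(1,q)$ has order $2(q-1)$, so the orbit size is $|G|/(2(q-1)) = q(q+1)/2$. If the pair is conjugate over $\bF_{q^2}$, the representative is $(Y^2-\ep X^2)^2$; the stabilizer is the stabilizer of a conjugate pair of points, which is a dihedral group of order $2(q+1)$, giving orbit size $|G|/(2(q+1)) = q(q-1)/2$.

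Finally I would verify the bookkeeping: the six orbit sizes are $(q+1)$, $q(q+1)$, $q(q+1)/2$, $|G|/2$, $|G|/2$, $q(q-1)/2$, and since $|G| = q^3-q$ these sum to $(q+1) + q(q+1) + q(q+1)/2 + (q^3-q) + q(q-1)/2 = q^3 + 2q^2 + q + 1$, matching the computed value $|\mF^0| = 1+q+2q^2+q^3$ from the itemized count above; this both confirms completeness (no configuration missed) and that the listed orbits are distinct. The main obstacle I anticipate is the stabilizer computations in the two cases involving a conjugate pair of roots from $\bF_{q^2}$: one must identify the stabilizer inside $PGL_2(q)$ of a point of $\bP(1,q^2)\setminus\bP(1,q)$ (resp. of such a point together with a rational point), and argue it has order $q+1$ (resp. $2$) — this requires either an explicit matrix computation over $\bF_{q^2}$, or the standard fact that the stabilizer in $PGL_2(q)$ of a conjugate pair is a nonsplit torus extended by the Frobenius-induced involution (a dihedral group of order $2(q+1)$). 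Using characteristic $3$ is harmless here since all these configuration counts and torus orders are characteristic-independent for $q\geq 5$; the only mild care needed is ensuring the representatives like $X^2Y(Y-X)$ genuinely have distinct roots (which needs $q>2$, true since $q\geq 5$).
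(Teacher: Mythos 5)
Your proposal is correct and follows essentially the same route as the paper: classify forms with $\Delta(f)=0$ by root-multiplicity pattern and rationality, use the triple transitivity of $G$ on $PG(1,q)$ for the fully rational configurations, and compute the stabilizers (order $2$ for $X^2(Y^2-\ep X^2)$, dihedral of order $2(q+1)$ for $(Y^2-\ep X^2)^2$) in the conjugate-pair cases. The only cosmetic difference is that you certify completeness by summing the six orbit sizes to $|\mF^0|=q^3+2q^2+q+1$, whereas the paper matches each orbit size against the count of forms of the corresponding factorization type.
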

\begin{proof}
 The number of forms which split completely over $\bF_q$ and having 
\begin{enumerate}
    \item  a single factor of multiplicity $4$ is $(q+1)$,
 \item two  factors  with multiplicity $1$ and $3$, is $(q+1)q$,
 \item  two factors with multiplicity $2$ each, is $(q+1)q/2$,
 \item three factors with multiplicities $1,1$ and $2$ is $(q+1)q(q-1)/2$.
 \end{enumerate}
 Since $G$ acts triply transitively on $PG(1,q)$, it follows that each of these $4$ types of forms constitute a single orbit. Since $X^4, X^3Y, X^2Y^2, X^2Y(Y-X)$ represent these $4$ types of forms, they can be chosen as representatives of the $4$ orbits.
 
 The number of forms $f(X,Y)$ with exactly one factor of multiplicity $2$ over $\bF_q$ (and the  remaining two factors Galois conjugate over $\bF_{q^2}$) is $(q+1)(q^2-q)/2$. It is easy to see that the form $f(X,Y)=X^2(Y^2-\epsilon X^2)$ has stabilizer the group of order $2$ generated by $t \mapsto -t$: because $g(t)=c+dt$  stabilizes $f(X,Y)$ if and only if $g(t)= \pm t$, and any $g \in G$ fixing $f(X,Y)$ must fix the  linear form $X$ and hence $g$ is of the form $t \mapsto c+dt$. Therefore, $G \cdot X^2(Y^2-\epsilon X^2)$ is  the set of forms  with exactly one factor of multiplicity $2$ over $\bF_q$.

 The number of forms $f(X,Y)$ with exactly two Galois conjugate factors of multiplicity $2$ over $\bF_{q^2}$  is $(q^2-q)/2$. It is easy to see that the form $f(X,Y)=(Y^2-\epsilon X^2)^2$ has stabilizer the dihedral group of order $2(q+1)$ generated by the involution $t \mapsto -t$ and the cyclic group of order $(q+1)$ given by $\{t \mapsto \tfrac{\epsilon b+ at}{a+b t} \colon (a,b) \in PG(1,q)\}$ (isomorphic to $\bF_{q^2}^\times/\bF_q^\times$). Therefore, $G \cdot (Y^2-\epsilon X^2)^2$ is  the set of forms  with exactly two Galois conjugate factors of multiplicity $2$ over $\bF_q$. \end{proof}

 \subsection{Cross-ratio and $\jmath$-invariant} Let  $F$ be an arbitrary field of characteristic $3$. We will identify   $\bP^1(F)$  with the projective line $F \cup \{\infty \}$ where $(s,t) \in \bP^1(F)$ is identified with $t/s \in F \cup \{ \infty\}$. We recall that  the cross-ratio $(P_1, P_2;P_3,P_4)$ of $4$ distinct and ordered points $(P_1, P_2, P_3, P_4)$ of $\bP^1(F)$ is
\beq \label{eq:crossratiodef} \lambda = (P_1, P_2;P_3,P_4)=  \frac{(P_3-P_1)(P_4-P_2)}{(P_3-P_2)(P_4-P_1)}. \eeq
We need two standard lemmas (see  \cite[\S 6.1, \S 1.11]{Hirschfeld2}). The \emph{anharmonic group} $G_*$ is the subgroup of $PGL_2(F)$ given by 
\[G_*=\{ t \mapsto t^{\pm 1}, 1 - t^{\pm 1}, 1/(1-t^{\pm 1}) \}. \]
It is generated by the order $3$ element  $t \mapsto 1/(1-t)$ and the involution  $t \mapsto t^{-1}$, and is isomorphic to the symmetric group $S_3$.

\begin{definition}
        Let  $\sigma_1, \dots, \sigma_4$ denote the following elements of the symmetric group $S_4$:
    \[ \sigma_1=(12)(34), \; \sigma_2=(13)(24),\;  \sigma_3=(12), \text{ and }\,  \sigma_4=(234).\]
\end{definition}

\begin{lem} \label{cross} The cross-ratio  $(P_1, P_2;P_3,P_4)$ gives a bijection between $F\setminus\{0,1\}$ and the set of  $PGL_2(F)$-orbits of ordered $4$-tuples $(P_1, \dots, P_4)$ of distinct points of the projective line over $F$. \\
The set of cross-ratios associated to the unordered set $\{P_1, P_2, P_3, P_4\}$  consists of the orbit of $\lambda = (P_1, P_2;P_3,P_4) $  under the  anharmonic group $G_*$. 
The homomorphism 
\[ \rho:S_4 \to G_*, \quad \rho(\sigma)( (P_1, P_2;P_3,P_4) )= (P_{\sigma(1)}, P_{\sigma(2)};P_{\sigma(3)},P_{\sigma(4)}), \]
 has kernel, the normal subgroup $\langle (12)(34), (13)(24)  \rangle \simeq \bZ/2 \bZ  \times \bZ/2 \bZ$. The quotient group $S_4/\mathrm{ker}(\rho)$  is isomorphic to $G_*$, and $\rho$ sends  the transposition $(12)$ and the $3$-cycle $(234)$  to  the involution  $\lambda \mapsto \lambda^{-1}$ and the order $3$ element  $\lambda \mapsto 1/(1-\lambda)$, respectively.
\end{lem}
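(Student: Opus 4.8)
The lemma is classical (see \cite[\S 6.1, \S 1.11]{Hirschfeld2}); here is the route I would take, in the order I would present it.

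\emph{Step 1 (invariance and the bijection).} I would first record that the cross-ratio \eqref{eq:crossratiodef} is $PGL_2(F)$-invariant. Since $PGL_2(F)$ is generated by the maps $t\mapsto t+a$, $t\mapsto at$ ($a\in F^\times$) and $t\mapsto 1/t$, this reduces to a one-line verification for each of the three generator types. Next, using that $PGL_2(F)$ acts sharply $3$-transitively on $\bP^1(F)$, there is a unique $g\in PGL_2(F)$ with $g(P_1,P_2,P_3)=(\infty,0,1)$, explicitly $g(t)=\tfrac{(P_3-P_1)(t-P_2)}{(P_3-P_2)(t-P_1)}$, and then $g(P_4)=\lambda$ is immediate from \eqref{eq:crossratiodef}. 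Distinctness of the $P_i$ forces $\lambda\notin\{0,1,\infty\}$, while every $\lambda\in F\setminus\{0,1\}$ is realised by $(\infty,0,1,\lambda)$. Combined with invariance, this shows that two ordered $4$-tuples of distinct points lie in one $PGL_2(F)$-orbit if and only if they have the same cross-ratio, which is the first assertion.

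\emph{Step 2 ($\rho$ on generators).} For $\sigma\in S_4$ the reordered tuple $(P_{\sigma(1)},\dots,P_{\sigma(4)})$ again consists of distinct points, and by Step 1 its cross-ratio depends only on $\lambda$; this makes $\rho(\sigma)$ a well-defined self-map of $F\setminus\{0,1\}$, and one checks $\rho$ is compatible with composition (whether this is read as a homomorphism or an anti-homomorphism is immaterial for everything below, since $G_*\simeq S_3$ is anti-isomorphic to itself). As $S_4=\langle (12),(234)\rangle$ (indeed $(12)(234)=(1234)$), it is enough to evaluate $\rho$ on these two elements, and substituting the normalised representative $(\infty,0,1,\lambda)$ into \eqref{eq:crossratiodef} gives
\[ \rho\big((12)\big)(\lambda)=(0,\infty;1,\lambda)=\lambda^{-1},\qquad \rho\big((234)\big)(\lambda)=(\infty,1;\lambda,0)=\tfrac{1}{1-\lambda}. \]
Hence $\mathrm{im}(\rho)$ is the subgroup of $PGL_2(F)$ generated by $\lambda\mapsto\lambda^{-1}$ and $\lambda\mapsto 1/(1-\lambda)$, which is $G_*$. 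This is exactly the last clause of the lemma, and it yields the second assertion: the cross-ratios attached to the \emph{unordered} set $\{P_1,\dots,P_4\}$ are the values $\rho(\sigma)(\lambda)$ as $\sigma$ runs over all $24$ reorderings, i.e.\ the $G_*$-orbit of $\lambda$.

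\emph{Step 3 (the kernel) and the main difficulty.} Since $|S_4|=24$ and $|G_*|=6$, we get $|\mathrm{ker}(\rho)|=4$. A direct substitution of $(\infty,0,1,\lambda)$ shows $\rho\big((12)(34)\big)(\lambda)=\rho\big((13)(24)\big)(\lambda)=\lambda$, so $\langle (12)(34),(13)(24)\rangle\simeq\bZ/2\bZ\times\bZ/2\bZ$ lies in $\mathrm{ker}(\rho)$; comparing orders it equals $\mathrm{ker}(\rho)$, which is normal in $S_4$, whence $S_4/\mathrm{ker}(\rho)\simeq G_*$. There is essentially no obstacle in any of this: the argument is a handful of cross-ratio evaluations plus an order count. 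The only points needing a little care are the well-definedness of $\rho$ (which rests entirely on the $PGL_2(F)$-invariance of Step 1) and bookkeeping with the composition convention, and neither affects the statement. The hypothesis $\mathrm{char}(F)=3$ is not used at all in this lemma; it becomes relevant only later, when the special values of the cross-ratio are analysed.
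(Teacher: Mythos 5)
Your proof is correct, and it is the standard argument: normalize via sharp $3$-transitivity to $(\infty,0,1,\lambda)$, evaluate $\rho$ on the generators $(12)$ and $(234)$ of $S_4$, and identify the kernel by direct evaluation plus an order count. The paper itself offers no proof of this lemma — it is quoted as a standard fact with a citation to Hirschfeld (\S 6.1, \S 1.11) — so there is nothing to diverge from; your write-up simply supplies the details the paper delegates to the reference. The one genuinely delicate point, that with the position-permutation convention $\rho$ is a priori only an anti-homomorphism, you flag explicitly and correctly dismiss as immaterial for the kernel, the image, and the two displayed evaluations, which is all the lemma asserts.
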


The $\jmath$-invariant of an unordered set $\{P_1, P_2, P_3, P_4\}$ is given by:
\[ \jmath(\lambda)= \frac{(\lambda+1)^6}{\lambda^2(\lambda-1)^2}, \]
where $\lambda$ is the cross-ratio of any ordering of $\{P_1, \dots, P_4\}$.

\begin{lem} \label{jmath_lem}
The function $\jmath(\lambda)$ gives a bijection between the image $\jmath(F\setminus\{0,1\})$ and  the set of $G_*$-orbits on $F \setminus \{0,1\}$.  Each of these $G_*$ orbits has size $6$ with one   exception $ \jmath^{-1}(0) = \{-1\}$.
\end{lem}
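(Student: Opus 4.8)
The plan is to analyze the map $\jmath:\lambda\mapsto (\lambda+1)^6/(\lambda^2(\lambda-1)^2)$ directly on the set $F\setminus\{0,1\}$. First I would observe that the expression is manifestly invariant under the two generators of the anharmonic group $G_*$: under $\lambda\mapsto \lambda^{-1}$ the numerator $(\lambda+1)^6$ picks up a factor $\lambda^{-6}$ and the denominator $\lambda^{-2}(\lambda^{-1}-1)^2 = \lambda^{-2}\lambda^{-2}(1-\lambda)^2$ picks up the matching $\lambda^{-6}$, so $\jmath$ is unchanged; under $\lambda\mapsto 1/(1-\lambda)$ one computes $\lambda+1\mapsto (2-\lambda)/(1-\lambda)$, and both numerator and denominator scale by $(1-\lambda)^{-6}$, again leaving $\jmath$ fixed (here one uses characteristic $3$, so $2=-1$ and $(2-\lambda)^6 = (\lambda+1)^6$). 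Hence $\jmath$ factors through the quotient set $(F\setminus\{0,1\})/G_*$, giving a well-defined surjection onto its image $\jmath(F\setminus\{0,1\})$; injectivity of the induced map is what remains.

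For injectivity, I would fix a value $c$ in the image and count the solutions of $\jmath(\lambda)=c$ in $F\setminus\{0,1\}$. Clearing denominators, $(\lambda+1)^6 - c\,\lambda^2(\lambda-1)^2 = 0$ is a polynomial of degree $6$ in $\lambda$ (the leading term $\lambda^6$ survives since $c\neq 0$ could only kill degree $\le 4$ terms; note $c=0$ is the exceptional case handled separately). Since the fibre is $G_*$-stable and $G_*$ acts on $F\setminus\{0,1\}$, every fibre is a union of $G_*$-orbits; as $|G_*|=6$, a fibre of size $6$ is a single orbit, which is exactly what injectivity of the induced map asserts. So it suffices to show the degree-$6$ polynomial has all $6$ roots distinct and lying in $F\setminus\{0,1\}$, for every $c$ in the image. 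That none of $0,1$ (nor the point at infinity) is a root is immediate by evaluation: at $\lambda=0$ one gets $1\neq 0$, at $\lambda=1$ one gets $2^6 = 1 \neq 0$ in characteristic $3$, and the polynomial is genuinely degree $6$. For distinctness of roots one argues that the $G_*$-orbit of any root has size $6$ unless the root is a fixed point of some nontrivial element of $G_*$; the fixed points of the elements of $G_*$ are the roots of $\lambda^2-\lambda+1$ (fixed by the order-$3$ elements, giving $\jmath=0$) and $\lambda\in\{-1,2,1/2\}$ together with the other involution fixed-point sets. In characteristic $3$, $-1=2=1/2$, so the involution $\lambda\mapsto\lambda^{-1}$ has its only fixed point in $F\setminus\{0,1\}$ at $\lambda=-1$, where again $\jmath(-1)= 0$. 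Thus every value $c\neq 0$ in the image has $G_*$ acting freely on its fibre, forcing the fibre to have size a multiple of $6$; combined with the degree bound $\le 6$, the fibre is exactly one $G_*$-orbit of size $6$.

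Finally, I would treat the exceptional fibre $\jmath^{-1}(0)$: $\jmath(\lambda)=0$ forces $(\lambda+1)^6=0$, i.e. $\lambda=-1$, which does lie in $F\setminus\{0,1\}$ (as $-1\neq 0$ and $-1\neq 1$ in characteristic $3$). So this fibre is the single point $\{-1\}$, and it is indeed a full $G_*$-orbit because, as noted, $-1$ is fixed by $\lambda\mapsto\lambda^{-1}$ and by the order-$3$ map $\lambda\mapsto 1/(1-\lambda)$ sends $-1\mapsto 1/2 = -1$, so the whole anharmonic group fixes $-1$; the orbit has size $1$. This establishes the bijection and identifies the unique exceptional orbit. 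The main obstacle I anticipate is the root-distinctness argument for the size-$6$ fibres: one must be careful in characteristic $3$ that no accidental coincidences among the special values $-1,2,1/2$ or among roots of $\lambda^2-\lambda+1$ cause a fibre to collapse; I would handle this cleanly by the free-action argument above (fixed points of nontrivial $G_*$-elements all map to $\jmath=0$) rather than by factoring the degree-$6$ polynomial explicitly.
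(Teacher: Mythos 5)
Your argument is correct, and it is worth noting that the paper itself offers no proof of this lemma: it is stated as one of two ``standard lemmas'' with a citation to Hirschfeld (\S 6.1, \S 1.11 of \emph{Projective geometries over finite fields}), so your self-contained proof is genuinely supplementary rather than a rederivation of an argument in the text. Your route --- check $G_*$-invariance of $\jmath$ on the two generators, bound each fibre by the degree-$6$ polynomial $(\lambda+1)^6-c\,\lambda^2(\lambda-1)^2$, and then force each nonzero fibre to be a single free orbit by observing that in characteristic $3$ every nontrivial element of $G_*$ has its unique fixed point in $F\setminus\{0,1\}$ at $\lambda=-1$ (the harmonic values $-1,2,1/2$ and the equianharmonic roots of $\lambda^2-\lambda+1=(\lambda+1)^2$ all collapse there), where $\jmath=0$ --- is exactly the right way to handle the characteristic-$3$ degeneration, and the exceptional fibre $\jmath^{-1}(0)=\{-1\}$ is treated correctly since the numerator $(\lambda+1)^6$ is the only source of zeros. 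The classical reference proves the analogous statement in general characteristic by distinguishing the harmonic and equianharmonic special values; your freeness argument buys a cleaner treatment here precisely because those special values coincide in characteristic $3$. Two cosmetic points: the evaluations at $\lambda=0,1$ are not actually needed (the fibre lies in $F\setminus\{0,1\}$ by definition, and the degree bound alone caps its size at $6$), and when you invoke ``a fibre of size $6$ is a single orbit'' the cleaner phrasing is that a nonempty fibre contains a free orbit of size $6$ and has at most $6$ elements, hence equals that orbit.
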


\begin{lem} \label{stabilizer_lemma} 
Let $f$ be a binary quartic form over $\bF_q$ with $\Delta(f) \neq 0$. Let $F$ denote the splitting field of $f$. Let Stab$_F(f)$ denote the stabilizer of $f$ in $PGL_2(F)$ and let Stab$(f)$ denote the stabilizer of $f$ in $G = PGL_2(q)$.  The permutation representation of $\mathrm{ Stab}_F(f)$ on the roots $( (s_1,t_1),\dots, (s_4,t_4) )$ gives an injective homomorphism $\sigma: \mathrm{ Stab}_F(f) \to S_4$ mapping $g \mapsto \sigma_g$. The group $\mathrm{Stab}_F(f)$ is isomorphic via this homomorphism to: 
\begin{enumerate}
\item $ \langle (12)(34), (13)(24) \rangle \simeq  \bZ/2 \bZ  \times \bZ /2 \bZ\,$ if $\jmath(f) \neq 0$,
\item $S_4\,$ if $\jmath(f) =0$.
\end{enumerate}
\end{lem}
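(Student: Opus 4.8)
## Proof proposal

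The plan is to exploit the cross-ratio machinery of Lemmas \ref{cross} and \ref{jmath_lem}. Since $\Delta(f) \neq 0$, the form $f$ has four distinct roots $P_1,\dots,P_4 \in \bP^1(F)$, where $F$ is the splitting field. An element $g \in PGL_2(F)$ stabilizes $f$ (as a point of the projective space of forms) if and only if $g$ permutes the unordered set $\{P_1,\dots,P_4\}$; this gives the injective homomorphism $\sigma: \mathrm{Stab}_F(f) \to S_4$, $g \mapsto \sigma_g$, of the statement. Injectivity is immediate because an element of $PGL_2(F)$ fixing three or more points of $\bP^1$ is the identity (and if $\sigma_g$ fixes three roots it fixes all four). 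So the whole problem is to determine the image of $\sigma$.

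First I would identify which permutations in $S_4$ are actually realized. Given any $\sigma \in S_4$, there is at most one $g \in PGL_2(F)$ with $\sigma_g = \sigma$ (by triple transitivity / rigidity of $PGL_2$), and such a $g$ exists precisely when the assignment $P_i \mapsto P_{\sigma(i)}$ extends to a projectivity, i.e. precisely when $(P_1,P_2;P_3,P_4) = (P_{\sigma(1)},P_{\sigma(2)};P_{\sigma(3)},P_{\sigma(4)})$ by Lemma \ref{cross}. By that same lemma the right-hand side equals $\rho(\sigma)(\lambda)$ where $\lambda$ is the cross-ratio and $\rho: S_4 \to G_*$ is the given surjection. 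Hence $\sigma$ is realized $\iff \rho(\sigma)$ fixes $\lambda$, i.e. $\iff \rho(\sigma) \in \mathrm{Stab}_{G_*}(\lambda)$. Therefore $\mathrm{image}(\sigma) = \rho^{-1}\big(\mathrm{Stab}_{G_*}(\lambda)\big)$, a subgroup of $S_4$ containing $\ker\rho = \langle(12)(34),(13)(24)\rangle$.

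Now I split into cases according to $\jmath(f)$. By Lemma \ref{jmath_lem}, the $G_*$-orbit of $\lambda$ has size $6$ unless $\jmath(f) = 0$, in which case $\lambda = -1$ and the orbit $\{-1\}$ has size $1$. In the generic case $\jmath(f) \neq 0$: the orbit has size $6 = |G_*|$, so $\mathrm{Stab}_{G_*}(\lambda)$ is trivial, and thus $\mathrm{image}(\sigma) = \rho^{-1}(\{1\}) = \ker\rho \simeq \bZ/2\bZ \times \bZ/2\bZ$, giving case (1). In the case $\jmath(f) = 0$: the orbit is $\{-1\}$, so $\mathrm{Stab}_{G_*}(\lambda) = G_*$ (the whole group fixes $\lambda = -1$), hence $\mathrm{image}(\sigma) = \rho^{-1}(G_*) = S_4$, giving case (2). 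One should double-check that characteristic $3$ does not degenerate the picture: the condition $\Delta(f) \neq 0$ forces $\lambda \notin \{0,1\}$, and the exceptional value $\lambda = -1$ is genuinely distinct from $0$ and $1$ in characteristic $3$ (note $-1 = 2 \neq 0,1$), and the three roots $-1, 2, \infty$ of $t = t^{-1}$ together with the fixed point structure of $t \mapsto 1-t$ etc.\ behave as in Lemma \ref{jmath_lem}, which was stated for arbitrary characteristic-$3$ fields; so no special care beyond citing that lemma is needed.

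The only genuine subtlety — and the step I expect to require the most care — is the realizability claim: that $\sigma \in S_4$ lies in $\mathrm{image}(\sigma)$ \emph{exactly} when the corresponding cross-ratio equation holds. One direction (if $g$ realizes $\sigma$ then cross-ratios match, since $g$ is a projectivity and cross-ratio is a projective invariant) is routine. The converse requires producing $g$: given that the two cross-ratios coincide, Lemma \ref{cross} says the ordered $4$-tuples $(P_1,\dots,P_4)$ and $(P_{\sigma(1)},\dots,P_{\sigma(4)})$ lie in the same $PGL_2(F)$-orbit, so some $g \in PGL_2(F)$ carries one to the other; that $g$ then fixes the unordered root set, hence fixes $f$ up to scalar, hence lies in $\mathrm{Stab}_F(f)$ with $\sigma_g = \sigma$. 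This is clean once Lemma \ref{cross} is invoked correctly, but one must be careful that "$g$ fixes the unordered set of roots" really does imply "$g$ fixes $f$ as a projective point" — this holds because a degree-$4$ form is determined up to scalar by its (multiset of) roots, and here all multiplicities are $1$.
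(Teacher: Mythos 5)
Your proof is correct, and its skeleton is the same as the paper's: both rest on Lemma \ref{cross}, namely that a projectivity inducing the permutation $\sigma$ on the four roots exists if and only if the cross-ratio is unchanged, i.e. if and only if $\rho(\sigma)$ fixes $\lambda$. The difference is in how this criterion is exploited. The paper works through it by hand: it explicitly constructs projectivities $h_1,h_2$ realizing $(12)(34)$ and $(13)(24)$ (so $\ker\rho$ is always in the image), and then tests the coset representatives $(12),(13),(14),(234)$ of $S_4/\ker\rho$ one at a time, finding in each case that realizability is equivalent to $\lambda=-1$. You instead package the whole computation into the single identity $\mathrm{image}(\sigma)=\rho^{-1}\bigl(\mathrm{Stab}_{G_*}(\lambda)\bigr)$ and read off $\mathrm{Stab}_{G_*}(\lambda)$ from orbit--stabilizer together with the orbit sizes recorded in Lemma \ref{jmath_lem}: orbit of size $6$, hence trivial stabilizer, when $\jmath\neq 0$; the fixed point $\lambda=-1$, hence full stabilizer $G_*$, when $\jmath=0$. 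This is cleaner and avoids the case-by-case cross-ratio manipulations, at the cost of leaning on Lemma \ref{jmath_lem}, while the paper's version has the minor advantage of exhibiting the realizing projectivities explicitly. One small point you share with the paper: both arguments silently identify $\jmath(f)=z_2(f)^6/\Delta(f)$ with $\jmath(\lambda_f)$ for the cross-ratio $\lambda_f$ of the roots; since the paper itself uses this identification without comment, your proof is not at a deficit, but it is this classical equality that connects the hypothesis ``$\jmath(f)=0$'' to ``$\lambda=-1$''.
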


\begin{proof}
Let $((s_1,t_1), \dots, (s_4,t_4))$ be an ordering of the roots of $f$, and let $\lambda_0$ denote the cross-ratio of this ordering. 
For $g \in  \text{Stab}_F(f)$, let $\sigma_g \in S_4$ be defined by  $g \cdot ((s_1,t_1), \dots, (s_4,t_4)) = ((s_{\sigma(1)},t_{\sigma(1)}), \dots, (s_{\sigma(4)},t_{\sigma(4)}))$. The homomorphism  $\sigma:\text{Stab}_F(f) \to S_4$ is injective, because any $g \in PGL_2(F)$ fixing three distinct points is the identity element.

The involutions  $\sigma_1=(12)(34)$ and $\sigma_2=(13)(24)$ are always in $\sigma(\text{Stab}_F(f))$.  To see this, let $h_1$ be the unique element of $PGL_2(F)$ sending $(r_1, r_2, r_3)$ to $(r_2, r_1, r_4)$. The following  cross-ratios are equal
\[ (r_1,r_2;r_3,r_4)=\lambda_0 = (r_2, r_1; r_4, h_1(r_4)) =(r_1, r_2; h_1(r_4), r_4).\] 
Comparing the first and last tuples, we conclude $h_1(r_4)=r_3$. Thus, $\sigma_1=\sigma_{h_1}$.
 % The condition for $h_1$ to be in $G$ is that the set $
 % \{\{r_1, r_2\}, \{r_3, r_4\}\}$ is preserved by Gal$(F/\bF_q)$ which occurs only if $f \in \mF_4, \mF_4', \mF_2, \mF_2'$. 
 
 Similarly, let $h_2$ be the unique element of $PGL_2(F)$ sending $(r_1, r_2, r_3)$ to $(r_3, r_4, r_1)$. The following  cross-ratios are equal
\[ (r_1,r_2;r_3,r_4)=\lambda_0 = (r_3, r_4; r_1, h_2(r_4)) =(r_1, h_2(r_4); r_3, r_4).\] 
Comparing the first and last tuples, we conclude $h_2(r_4)=r_2$. Thus, $\sigma_2=\sigma_{h_2}$.
 % The condition for $h_2$ to be in $G$ is that the set $\{\{r_1, r_3\}, \{r_2, r_4\}\}$ is preserved by Gal$(F/\bF_q)$ which occurs only if $f \in \mF_4, \mF_4'$.
 
 The  quotient group $S_4/\langle \sigma_1, \sigma_2 \rangle \simeq S_3$ has  $3$ elements of order $2$, namely the cosets represented by $(12), (13), (14)$ and two elements of order $3$ represented by $\sigma_4$ and $\sigma_4^2$.  
 Let $h_3$  be the unique element of $PGL_2(F)$ sending $(r_1, r_2, r_3)$ to $(r_2, r_1, r_3)$. By the properties of cross-ratio,  $\lambda_0=(r_2, r_1; r_3, h_3(r_4))$ whereas $(r_1, r_2; r_3, h_3(r_4))=\lambda_0^{-1}$. Therefore, $h_3(r_4)=r_4$ if and only if $\lambda_0=-1$. Thus, $\sigma_3=\sigma_{h_3}$ is in $\sigma(\text{Stab}_F(f))$ if and only if  $\lambda_0=-1$. An identical argument with $(12)$ replaced by $(13)$ and $(14)$, shows that $(13)  \in  \sigma(\text{Stab}_F(f))$ if and only if  $\lambda_0=-1$, and 
$(14) \in \sigma(\text{Stab}_F(f))$ if and only if  $\lambda_0=-1$. Let $h_4$  be the unique element of $PGL_2(F)$ sending $(r_2, r_3, r_4)$ to $(r_3, r_4, r_2)$. By the properties of cross-ratio,  
\[ \lambda_0 =(h_4(r_1), r_3; r_4, r_2), \quad (h_4(r_1), r_2; r_3, r_4) = 1 - \lambda_0^{-1}.\]
Therefore, $h_4(r_1)=r_1$ if and only if $\lambda_0 =1 - \lambda_0^{-1}$ which is equivalent to $\lambda_0=-1$. Thus $\sigma_4=\sigma_{h_4}$ is in $\sigma(\text{Stab}_F(f))$ if and only if  $\lambda_0=-1$. 

This completes the proof that Stab$_F(f) \simeq S_4$ if $\jmath(f)=0$, and Stab$_F(f) = \langle (12)(34), (13)(24) \rangle \simeq \bZ/2 \bZ  \times \bZ /2 \bZ$ if $\jmath(f) \neq 0$. 
 \end{proof}

 \subsection{Cross-ratios of restricted orderings of roots of quartic forms}
For a quartic form $f$ over $\bF_q$ with $\Delta(f) \neq 0$, it will be useful to consider  orderings of the roots of $f$ which satisfy certain restrictions:
 \begin{definition} \label{restricted_ordering}
For $f  \in \mF$ where $\mF\in \{\mF_4', \mF_2', \mF_{2}, \mF_{1}\}$,  a \emph{restricted ordering} of the roots of $f$ is an ordering of the form:
 \begin{description}
 \item  [$\mF=\mF_{4}:$ ]\emph{ no restriction on the ordering $(r_1, r_2, r_3, r_4)$}.
\item[$\mF=\mF_4':$ ]  $(r_1, r_2, r_3, r_4)=(r_1, \phi(r_1), r_3, \phi(r_3))$.
\item  [$\mF= \mF_{2}:$ ]  $(r_1, r_2, r_3, r_4)=(r_1,r_2, r_3, \phi(r_3))$\emph{ with }$r_1, r_2 \in PG(1,q)$.
\item[$\mF= \mF_2':$ ]   $(r_1, r_2, r_3, r_4)=(r_1, \phi^2(r_1), \phi(r_1), \phi^3(r_1))$.
\item  [$\mF=\mF_1:$ ]   $(r_1, r_2, r_3, r_4)=(r_1,r_2,\phi(r_2),\phi^2(r_2))$ \emph{ with } $r_1 \in PG(1,q)$.
\end{description}
Let $\text{ord}(f)$ denote the set of all restricted orderings of the roots of $f$. \\
We note that $g\in G$ carries Ord$(f)$ to Ord$(g \cdot f)$.
\end{definition}

We  define some subgroups of $S_4$:
\beq
 \label{eq:ZmF}
 Z(\mF)= \begin{cases} S_4 &\text{ if $\mF=\mF_4$}\\
 \langle (234) \rangle \simeq \bZ/3 \bZ &\text{ if $\mF=\mF_1$}\\ 
  \langle (12), (34) \rangle \simeq \bZ/2 \bZ  \times \bZ /2 \bZ &\text{ if $\mF=\mF_2$}\\
\langle (12), (1324) \rangle \simeq D_4 &\text{ if $\mF=\mF_4'$}\\
 \langle (1324) \rangle \simeq  \bZ/4 \bZ &\text{ if $\mF=\mF_2'$}\end{cases}
\eeq

The Frobenius map $\phi$ acts Ord$(f)$  by $\phi(r_1, r_2, r_3, r_4)=(\phi(r_1), \dots, \phi(r_4))$. Clearly  $(\phi(r_1), \dots, \phi(r_4))$ is in Ord$(f)$ and also a  permutation  $(r_{\sigma_\phi(1)}, \dots, r_{\sigma_\phi(4)})$ of $(r_1, \dots, r_4)$.
  \begin{lem}  \label{ZS4}
 Let $f \in \mF$ where $\mF \in \{\mF_4, \mF_2, \mF_1, \mF_4', \mF_2'\}$. The permutation  $\sigma_{\phi}(\mF) \in S_4$  above, associated to the action of the Frobenius map $\phi$ on Ord$(f)$ is given by:
 \[ \sigma_{\phi}(\mF) = \begin{cases}  \mathrm{identity} &\text{if $\mF=\mF_4$},\\ 
 (12)(34) &\text{ if $\mF=\mF_4'$,}\\
   (34) &\text{ if $\mF=\mF_2$,}\\
      (1324) &\text{ if $\mF= \mF_2'$,}\\
   (234) &\text{ if $\mF=\mF_1$.} \end{cases}\]

The centralizer $Z_{S_4}(\sigma_\phi(\mF))$ of $\sigma_\phi$ in $S_4$ equals the group  $Z(\mF)$  given in  \eqref{eq:ZmF}.

The set  $\mathrm{ord}(f)$ forms a single orbit  $Z(\mF)$-orbit. 
\end{lem}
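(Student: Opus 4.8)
The plan is to verify the three assertions in order, with the bulk of the work being the first two (which are essentially bookkeeping) and the third following from them by a counting argument. First I would compute $\sigma_\phi(\mF)$ directly from the definition of restricted orderings in Definition \ref{restricted_ordering}. For each type $\mF$, a restricted ordering $(r_1,r_2,r_3,r_4)$ has its entries tied together by $\phi$ in a prescribed pattern, so applying $\phi$ entrywise and re-expressing the result as a permutation of the original tuple is immediate: for $\mF_4$ all roots lie in $PG(1,q)$ so $\phi$ fixes each, giving the identity; for $\mF_4'$, $(r_1,\phi(r_1),r_3,\phi(r_3)) \mapsto (\phi(r_1), r_1, \phi(r_3), r_3)$ since $\phi^2(r_1)=r_1$ (the quadratic factors are defined over $\bF_{q^2}$), giving $(12)(34)$; for $\mF_2$ only $r_3,r_4$ get swapped, giving $(34)$; for $\mF_2'$ a single root $r_1$ in $\bF_{q^4}$ has its Galois orbit $(r_1,\phi^2 r_1,\phi r_1,\phi^3 r_1)$ permuted cyclically by $\phi$ as $(1324)$; and for $\mF_1$ the cubic factor's roots $(r_2,\phi r_2,\phi^2 r_2)$ cycle as $(234)$ while $r_1$ is fixed. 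I should also note in passing that a restricted ordering exists in each case — i.e.\ the factorization types force exactly these Galois patterns — which is already implicit in the definition of $\mF_i,\mF_i'$.

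Next I would compute the centralizer $Z_{S_4}(\sigma_\phi(\mF))$ and check it equals $Z(\mF)$ from \eqref{eq:ZmF}. Each of these is a routine finite-group computation: the centralizer of the identity is all of $S_4$; the centralizer of a transposition $(34)$ is $\langle(12),(34)\rangle$ of order $4$; the centralizer of a double transposition $(12)(34)$ is the dihedral group of order $8$ generated by $(12)$ and $(1324)$ (one checks $(1324)^2=(12)(34)$ and $(1324)$ conjugates $(12)(34)$ to itself); the centralizer of a $4$-cycle $(1324)$ is the cyclic group it generates, of order $4$; and the centralizer of a $3$-cycle $(234)$ in $S_4$ is $\langle(234)\rangle$ of order $3$. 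These match the table entries for $\mF_4,\mF_2,\mF_4',\mF_2',\mF_1$ respectively.

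Finally, for the transitivity of the $Z(\mF)$-action on $\mathrm{ord}(f)$: every restricted ordering is obtained from a fixed one $(r_1,r_2,r_3,r_4)$ by some permutation $\tau\in S_4$, and $\tau\cdot(r_1,r_2,r_3,r_4)$ is again a restricted ordering precisely when $\tau$ preserves the Galois pattern defining the restriction, i.e.\ when $\tau\sigma_\phi\tau^{-1}=\sigma_\phi$ (conjugating the "$\phi$ acts as $\sigma_\phi$" relation), which says exactly $\tau\in Z_{S_4}(\sigma_\phi(\mF))=Z(\mF)$. Since distinct permutations of the four distinct roots give distinct orderings, $Z(\mF)$ acts freely, so $\mathrm{ord}(f)$ is a single free $Z(\mF)$-orbit; one can cross-check cardinalities — e.g.\ $|\mathrm{ord}(f)|=24,8,4,4,3$ for $\mF_4,\mF_4',\mF_2,\mF_2',\mF_1$ — against the obvious direct count of how many orderings respect each pattern. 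The only mild subtlety, and the place to be slightly careful, is making the "$\tau$ respects the restriction iff $\tau$ commutes with $\sigma_\phi$" step precise — in particular that the restriction is genuinely equivalent to the single condition "$\phi$ permutes the entries by $\sigma_\phi$" and carries no extra constraint (e.g.\ for $\mF_2$ one must use that $r_1,r_2$ are the two $\bF_q$-rational roots, so $\tau$ must also preserve the partition $\{\{1,2\},\{3,4\}\}$ — but that is already encoded in commuting with $(34)$, whose fixed-point set is $\{1,2\}$).
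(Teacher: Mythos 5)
Your proposal is correct and follows essentially the same route as the paper: case-by-case computation of $\sigma_\phi$ from Definition \ref{restricted_ordering}, the standard centralizer computations in $S_4$, and the observation that a permutation $\tau$ carries one restricted ordering to another exactly when it commutes with $\sigma_\phi$. If anything, you are slightly more careful than the paper's proof, which only spells out the forward direction (two restricted orderings differ by an element of $Z_{S_4}(\sigma_\phi)$), whereas your ``iff'' formulation together with the cardinality cross-check $|\mathrm{ord}(f)|=|Z(\mF)|$ also gives the reverse containment needed for $\mathrm{ord}(f)$ to be a full orbit.
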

\begin{proof}
If $\mF=\mF_4$, then $\sigma_\phi$ acts trivially on Ord$(f)$ and hence $Z_{S_4}(\sigma_\phi)=S_4$. If $\mF=\mF_4'$, then  $\sigma_\phi$ acts on $(r_1, \phi(r_1), r_2, \phi(r_2)$ by the permutation  $(12)(34)$. If $\mF=\mF_2$, then $\sigma_\phi$ acts on $(r_1, r_2, r_3, \phi(r_3))$ by the permutation  $(34)$. If $\mF=\mF_2'$, then $\sigma_\phi$ acts on $(r_1, \phi^2(r_1), \phi(r_1), \phi^3(r_1)$ by the permutation  $(1324)$. If $\mF=\mF_1$, then $\sigma_\phi$ acts on $(r_1, r_2, \phi(r_2),\phi^2(r_2)$ by the permutation  $(234)$.\\

It is easy to check that stabilizers of identity, $(12)(34)$, $(34)$, $(1324)$, and  $(234)$ are $S_4$, $\langle (12), (1324) \rangle$,  $\langle (12), (34) \rangle$, 
$\langle(1324)\rangle$, and 
$\langle(234)\rangle$ respectively.  In other words $Z_{S_4}(\sigma_\phi)=Z(\mF)$ as defined in \eqref{eq:ZmF}.

If  $(r_1, \dots, r_4)$ and $(r_1', \dots, r_4')$  are in $ \text{ord}(f)$, let $\sigma \in S_4$ such that  $(r_1',\dots,r_4') = (r_{\sigma(1)}, \dots, r_{\sigma(4)})$.
It follows that 
\[ r_{\sigma_\phi \sigma(i)}= \phi(r_{\sigma(i)})  =\phi(r_i')= r_{\sigma_{\phi}(i)}'
= r_{\sigma \sigma_{\phi}(i)}
,\]
and hence $\sigma \in Z_{S_4}(\sigma_\phi)$.
\end{proof} 

\begin{prop} \label{stabilizer_prop} Given $f$ with $\Delta(f) \neq 0$, let $(r_1,r_2,r_3,r_4)$ be a fixed restricted ordering of the roots of $f$, and let 
 $\lambda_f$ denote the cross-ratio of this ordering. Let $F$ be a splitting field of $f$ over $\bF_q$. We recall from  Lemma \ref{stabilizer_lemma} the injective homomorphism $\sigma:\text{Stab}_F(f) \hookrightarrow  S_4$. We identify $\text{Stab}_F(f)$ with its image in $S_4$. Let $\sigma_{\phi} \in S_4$ be as in Lemma \ref{ZS4}.  The  stabilizer group Stab$(f) =\mathrm{Stab}_F(f) \cap PGL_2(q)$ equals the centralizer of $\sigma_\phi$ in $\mathrm{Stab}_F(f)$
\[ \mathrm{Stab}(f)=Z_{\mathrm{Stab}_F(f)}(\sigma_\phi), \] 
and is isomorphic to:\\

In case $\jmath(f) \neq 0$:
\begin{enumerate}
\item[(1)]$  \langle (12)(34), (13)(24) \rangle \simeq \bZ/2 \bZ \times \bZ/2 \bZ$ if  $f \in \mF_4$,
\item[(2)]$  \langle (12)(34), (13)(24) \rangle \simeq \bZ/2 \bZ \times \bZ/2 \bZ$ if  $f \in   \mF_4'$,
\item[(3)]   $\langle (12)(34) \rangle \simeq  \bZ/2 \bZ$  if  $f \in \mF_2$,
\item[(4)]  $\langle (12)(34) \rangle \simeq  \bZ/2 \bZ$ 
 if  $f \in  \mF_2'$,
\item[(5)] \emph{the trivial group}  if  $f \in \mF_1$.\\
\end{enumerate}

In case  $\jmath(f) =0$: 
\begin{enumerate}
\item[(a)] $S_4$ if 
$f \in \mF_4$,
\item[(b)] 
$ \langle (12), (1324)  \rangle \simeq D_4$
 if  $f \in \mF_4'$,
\item[(c)] $\langle (12),(34)\rangle   \simeq \bZ/2 \bZ \times \bZ/2 \bZ$
  if  $f \in \mF_2$,
\item[(d)] $\langle (1324) \rangle \simeq \bZ/4 \bZ$    if  $f \in \mF_2'$,
\item[(e)] $\langle (234) \rangle \simeq \bZ/3 \bZ $  if  $f \in \mF_1$.
\end{enumerate}
\end{prop}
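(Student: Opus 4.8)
\textbf{Proof plan for Proposition \ref{stabilizer_prop}.}

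The plan is to combine Lemma \ref{stabilizer_lemma}, which identifies $\mathrm{Stab}_F(f)$ as a subgroup of $S_4$, with the Galois-descent principle that $\mathrm{Stab}(f) = \mathrm{Stab}_F(f) \cap PGL_2(q)$ consists precisely of those $g \in \mathrm{Stab}_F(f)$ that commute with the Frobenius action on the roots. The first step is to establish the key identity $\mathrm{Stab}(f) = Z_{\mathrm{Stab}_F(f)}(\sigma_\phi)$. An element $g \in PGL_2(F)$ lies in $PGL_2(q)$ if and only if $\phi(g) = g$, where $\phi$ acts entrywise on a matrix representative. Via the injective homomorphism $\sigma: \mathrm{Stab}_F(f) \hookrightarrow S_4$ of Lemma \ref{stabilizer_lemma}, $\phi$-conjugation on $g$ corresponds to conjugation of $\sigma_g$ by $\sigma_\phi$: indeed, if $g$ permutes the roots by $\sigma_g$, then $\phi(g)$ permutes the Frobenius-shifted roots accordingly, so $\sigma_{\phi(g)} = \sigma_\phi \, \sigma_g \, \sigma_\phi^{-1}$. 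Since $\sigma$ is injective, $g = \phi(g)$ iff $\sigma_g$ commutes with $\sigma_\phi$. This gives $\mathrm{Stab}(f) = Z_{\mathrm{Stab}_F(f)}(\sigma_\phi)$ once one checks that the restricted ordering is chosen so that $\sigma_\phi$ is exactly the permutation recorded in Lemma \ref{ZS4}.

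The second step is purely computational within $S_4$: given $\mathrm{Stab}_F(f)$ (which is $\langle(12)(34),(13)(24)\rangle$ when $\jmath(f)\neq 0$ and $S_4$ when $\jmath(f)=0$, by Lemma \ref{stabilizer_lemma}) and given $\sigma_\phi = \sigma_\phi(\mF)$ from Lemma \ref{ZS4}, compute the centralizer of $\sigma_\phi$ inside $\mathrm{Stab}_F(f)$. For $\jmath(f)\neq 0$ one intersects $\langle(12)(34),(13)(24)\rangle$ with $Z_{S_4}(\sigma_\phi) = Z(\mF)$: when $\mF = \mF_4$, $\sigma_\phi$ is trivial so the centralizer is all of $V_4$; when $\mF=\mF_4'$, $\sigma_\phi=(12)(34)\in V_4$ which is abelian so again the full $V_4$; when $\mF=\mF_2$, $\sigma_\phi=(34)$ and $Z(\mF_2)\cap V_4 = \langle(12)(34)\rangle$; when $\mF=\mF_2'$, $\sigma_\phi=(1324)$ and $\langle(1324)\rangle \cap V_4 = \langle(12)(34)\rangle$; when $\mF=\mF_1$, $\sigma_\phi=(234)$ and $\langle(234)\rangle\cap V_4$ is trivial. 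For $\jmath(f)=0$, $\mathrm{Stab}_F(f)=S_4$, so $\mathrm{Stab}(f) = Z_{S_4}(\sigma_\phi) = Z(\mF)$ directly, which by Lemma \ref{ZS4} gives $S_4, D_4, \bZ/2\times\bZ/2, \bZ/4, \bZ/3$ in the five cases. These case checks are all elementary order-of-centralizer verifications in $S_4$.

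The main obstacle — and the only non-formal point — is the first step: one must be careful that $\sigma_\phi$, as an element of $S_4$, is a \emph{well-defined} conjugacy-class-independent object only after fixing a restricted ordering, and that the restricted orderings of Definition \ref{restricted_ordering} are precisely engineered so that the Frobenius permutation is the specified $\sigma_\phi(\mF)$ rather than some conjugate. The fact (Lemma \ref{ZS4}) that $\mathrm{ord}(f)$ is a single $Z(\mF)$-orbit is what guarantees that changing the restricted ordering conjugates $\sigma_\phi$ by an element of $Z(\mF) = Z_{S_4}(\sigma_\phi)$, i.e.\ does not change $\sigma_\phi$ at all, so the statement is independent of the choice. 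One must also verify compatibility of the identification $\sigma$ in Lemma \ref{stabilizer_lemma} (defined via an arbitrary ordering) with the restricted ordering used here; this amounts to noting that both $\mathrm{Stab}_F(f)$ and $\sigma_\phi$ are being viewed inside the same copy of $S_4$ attached to the fixed restricted ordering $(r_1,r_2,r_3,r_4)$. Once this bookkeeping is in place, the proposition follows by assembling the centralizer computations above.
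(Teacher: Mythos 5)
Your proposal is correct and follows essentially the same route as the paper: the identity $\mathrm{Stab}(f)=Z_{\mathrm{Stab}_F(f)}(\sigma_\phi)$ is obtained exactly as in the paper from $\sigma_{\phi(g)}=\sigma_\phi\sigma_g\sigma_\phi^{-1}$ together with injectivity of $\sigma$, and the case-by-case intersections of $\mathrm{Stab}_F(f)$ (from Lemma \ref{stabilizer_lemma}) with $Z_{S_4}(\sigma_\phi)=Z(\mF)$ (from Lemma \ref{ZS4}) match the paper's computations. Your added remark on the independence of $\sigma_\phi$ from the choice of restricted ordering, via the single $Z(\mF)$-orbit structure of $\mathrm{ord}(f)$, is correct bookkeeping that the paper leaves implicit.
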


\begin{proof}
For $g \in \text{Stab}_F(f)$ to be in $G$ we must have $\phi(g)=g$ which is equivalent to $\sigma_{\phi(g)}=\sigma_g$. Since $\sigma_{\phi(g)} = \sigma_\phi \sigma_g (\sigma_\phi)^{-1}$ we get
\[ \text{Stab}(f) = \{ g \in \text{Stab}_F(f) : \sigma_\phi \sigma_g (\sigma_\phi)^{-1}= \sigma_g\} =Z_{\text{Stab}_F(f)}(\sigma_\phi). \]
Therefore, $\text{Stab}(f) = \text{Stab}_F(f)\cap Z_{S_4}(\sigma_\phi)$.

If $f \in \mF_4$ then $F = \bF_q$ and hence $\text{Stab}(f) = \text{Stab}_F(f)$ is as given in  Lemma \ref{stabilizer_lemma}:
\begin{description}
\item[--]  $ \langle (12)(34), (13)(24) \rangle$ if $\jmath(f) \neq 0$,
\item[--]  $S_4$ if  $\jmath(f)=0$.
\end{description}

This proves the  assertions about $\mF_4$ in $1),  a)$.

 If $f \in \mF_4'$ then Stab$(f)=\text{Stab}_F(f) \cap \langle (12), (1324) \rangle$. Thus, Stab$(f)$ equals
\begin{description}
\item[--]  $\langle (12), (1324) \rangle$ if  $\jmath(f)=0$,
\item[--]  $ \langle (12)(34), (13)(24) \rangle$  if $\jmath(f) \neq 0$.
\end{description}
This proves assertions 2), b). If $f\in \mF_2$ then Stab$(f)=\text{Stab}_F(f) \cap \langle (12), (34) \rangle$. Thus, Stab$(f)$ equals
\begin{description}
\item[--]  $ \langle (12), (34) \rangle$ if $\jmath(f)=0$, 
\item[--]  $ \langle (12)(34) \rangle$ if $ \jmath(f) \neq 0$.
\end{description}
This proves assertions 3), c). If $f\in \mF_2'$ then Stab$(f)=\text{Stab}_F(f) \cap 
\langle (1324)\rangle$.  Thus, Stab$(f)$ equals
\begin{description}
\item[--]  $ \langle (1324) \rangle$ if $\jmath(f)=0$, 
\item[--]  $ \langle (12)(34) \rangle$ if $ \jmath(f) \neq 0$.
\end{description}
This proves assertions 4), d). If $f\in \mF_1$ then Stab$(f)=\text{Stab}_F(f) \cap 
\langle (234)  \rangle$.  Thus Stab$(f)$ equals
\begin{description}
\item[--]  $ \langle (234) \rangle$ if $\jmath(f)=0$, 
\item[--]  trivial  if $ \jmath(f) \neq 0$.
\end{description}
This proves assertions 5), e).
 \end{proof}

%  It follows from \eqref{eq:g_4}, that 
% For $g \in GL_2(\overline{\bF}_q)$  we claim:
% \[\Delta(g \cdot f) = \det(g)^{-12} \Delta(f), \quad z_2(g \cdot f) = \det(g)^{-2} z_2(f). \]
% The identity for $z_2(f)$ is clear from the matrix $g_4$ given in \eqref{eq:g_4}. In order to prove  the identity 
% for $\Delta(f)$, it suffices to prove the identity
% for the matrices $g = \bbsm 1 &0 \\0 &d \besm,  \bbsm  0& 1\\ 1& 0\besm$ and $ \bbsm 1 & 0\\c  &1 \besm$, because these matrices generate $GL_2(\overline{\bF}_q)$.
% For $g = \bbsm 0&1 \\1 &0 \besm$, the coefficients of $g \cdot f$ are $(z_4,z_3, \dots, z_0)$
%  and hence $\Delta(g \cdot f)= \Delta(f)=\det(g)^{-12} \Delta(f)$.
%  For $g = \bbsm 1 &0 \\0 &d \besm$, the coefficients of $g \cdot f$ are $(z_0,z_1 d^{-1}, \dots, z_4 d^{-4})$
%  and hence $\Delta(g \cdot f)= d^{-12} \Delta(f)=\det(g)^{-12} \Delta(f)$. For $g = \bbsm 1 &0 \\c &1 \besm$, we have   $g \cdot f=\prod_{i=1}^4 ( X(y_i-cx_i)-Y x_i)$,  where $f(X,Y)=\prod_{i=1}^4 (Xy_i-Yx_i)$.  From the alternative expression $\Delta(f)=\prod_{i<j}(y_ix_j-y_jx_i)^2$, and the fact  that $(y_i-cx_i)x_j-(y_j-cx_j)x_i=(y_ix_j-y_jx_i)$, it follows that $\Delta(g \cdot f)=\Delta(f)=\det(g)^{-12} \Delta(f)$. Thus, $\jmath(f)=z_2(f)^6/\Delta(f)$ depends only on  the $GL_2(\overline{\bF}_q)$-orbit of $f$. At the projective level, $\jmath(f)$ is a $PGL_2(\overline{\bF}_q)$-invariant.\\

\begin{definition}
Let $\rho:S_4 \to G_*$ be the homomorphism defined in Lemma \ref{cross}. 
The subgroup  $\rho(Z_{S_4}(\sigma_\phi(\mF))$ of the anharmonic group $G_*$ will be denoted by $H_i$ in the case of $\mF=\mF_i, i=1,2,4$  and $H_i'$ in the case $\mF=\mF_i', i=2,4$. 
\end{definition}
\begin{prop}
The groups $H_i, H_i'$ in the definition above equal:
\begin{align}
\nonumber H_1&=\langle t \mapsto  1/(1-t) \rangle, \\
H_2&=\langle t \mapsto  1/t\rangle, \\
\nonumber H_4&=G_*, \\
\nonumber H_4'&=H_2'=H_2.
\end{align}
\end{prop}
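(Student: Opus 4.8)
The plan is to compute $\rho(Z_{S_4}(\sigma_\phi(\mF)))$ directly for each of the five cases, using the two structural facts already available: first, Lemma \ref{ZS4} identifies $Z_{S_4}(\sigma_\phi(\mF))$ with the explicit subgroup $Z(\mF)$ of $S_4$ listed in \eqref{eq:ZmF}; second, Lemma \ref{cross} tells us that $\rho$ has kernel $K=\langle (12)(34),(13)(24)\rangle$, sends the transposition $(12)$ to the involution $t\mapsto t^{-1}$, and sends the $3$-cycle $(234)$ to $t\mapsto 1/(1-t)$. So the whole proposition reduces to chasing each generator of $Z(\mF)$ through $\rho$ modulo $K$.

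Concretely, I would argue case by case. For $\mF=\mF_1$: $Z(\mF_1)=\langle(234)\rangle$, and since $(234)\cap K=\{e\}$, $\rho$ is injective on it, giving $H_1=\langle t\mapsto 1/(1-t)\rangle$, the cyclic group of order $3$. For $\mF=\mF_4$: $Z(\mF_4)=S_4$, so $H_4=\rho(S_4)=G_*$ by Lemma \ref{cross}. For $\mF=\mF_2$: $Z(\mF_2)=\langle(12),(34)\rangle$; here $(34)=(12)\cdot(12)(34)\in(12)K$, so $(12)$ and $(34)$ have the same image under $\rho$, namely $t\mapsto t^{-1}$, and $H_2=\langle t\mapsto t^{-1}\rangle$ has order $2$. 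For $\mF=\mF_4'$: $Z(\mF_4')=\langle(12),(1324)\rangle\simeq D_4$; one checks $(1324)^2=(12)(34)\in K$, so $\rho((1324))$ has order $2$, and $(1324)=(12)\cdot(1324)(12)$ — more simply, $(1324)(12)=(13)(24)\in K$ wait, let me instead just note that $\rho((1324))$ lies in $\rho(D_4)$ and $D_4\cap K=\langle(12)(34)\rangle$ (the center of $D_4$ inside $K$), so $\rho(D_4)\cong D_4/(D_4\cap K)$ has order $\le 4$; since $\rho((12))=(t\mapsto t^{-1})$ is nontrivial and $\rho((1324))$ is either trivial or equals $\rho((12))$, in all subcases $\rho(D_4)=\langle t\mapsto t^{-1}\rangle=H_2$, so $H_4'=H_2$. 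For $\mF=\mF_2'$: $Z(\mF_2')=\langle(1324)\rangle\simeq \bZ/4\bZ$; since $(1324)^2=(12)(34)\in K$ but $(1324)\notin K$, $\rho$ identifies $\langle(1324)\rangle/(\langle(1324)\rangle\cap K)$ with a group of order $2$, and I must pin down which order-$2$ element of $G_*$ it is — writing $(1324)=(12)(13)(14)$ or, better, $(1324)=(12)\cdot(134)$... actually the cleanest route is $(1324)\equiv(1324)(12)(34)=(13)\pmod K$ wait $(1324)(34)=(132)$, hmm. The safe approach: $(1324)$ and $(12)$ differ by an element of $K$ iff $(12)(1324)=(1324)(12)$ up to $K$, and one verifies $(12)(1324)=(134)$, $(1324)(12)=(234)$, which are not $K$-related to each other but one computes $(1324)\cdot(13)(24)=(14)\cdot$... — in the writeup I would simply exhibit the $K$-coset of $(1324)$ as that of $(12)$ by the identity $(1324)=(12)\cdot(12)(1324)=(12)\cdot(13)(24)$ and $(13)(24)\in K$, hence $\rho((1324))=\rho((12))=(t\mapsto t^{-1})$, giving $H_2'=H_2$.

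The only real subtlety, and the step I would be most careful with, is the bookkeeping in the $D_4$ and $\bZ/4\bZ$ cases — specifically verifying that the generator $(1324)$ maps to the \emph{same} involution $t\mapsto t^{-1}$ as $(12)$ rather than to $t\mapsto 1-t$ or $t\mapsto 1/(1-t)\cdot(\cdots)$, i.e.\ that the image lands in $\langle t\mapsto t^{-1}\rangle$ and not some other order-$2$ subgroup of $G_*\simeq S_3$. This comes down to a single coset computation in $S_4/K\simeq S_3$: the three involutions of $S_4/K$ are the cosets of $(12),(13),(14)$, mapped by $\rho$ to the three involutions $t\mapsto t^{-1}$, $t\mapsto 1-t$, $t\mapsto t/(t-1)$ of $G_*$; one checks $(1324)K=(12)K$ by observing $(1324)(12)=(234)\cdot$ — rather, $(12)^{-1}(1324)=(12)(1324)=(134)\notin K$, so that is not it; instead $(13)^{-1}(1324)=(13)(1324)=(1423)(13)$... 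Since these short computations in $S_4$ are error-prone, in the final text I will fix the labelled roots, write out $\sigma_\phi(\mF_2')=(1324)$ acting as $(r_1,r_2,r_3,r_4)=(r_1,\phi^2 r_1,\phi r_1,\phi^3 r_1)\mapsto\phi(\text{that})$, and directly compute the cross-ratio $\lambda_{f}$ versus the cross-ratio of the $(1324)$-permuted ordering using \eqref{eq:crossratiodef}, showing it equals $\lambda_f^{-1}$; this bypasses the $S_4$ coset arithmetic entirely and nails down $H_2'=H_4'=H_2=\langle t\mapsto t^{-1}\rangle$. The remaining cases $H_1,H_2,H_4$ are immediate from Lemma \ref{cross} once $Z(\mF)$ is read off from \eqref{eq:ZmF}.
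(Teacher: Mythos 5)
Your proposal is correct and takes essentially the same route as the paper: read off $Z(\mF)$ from \eqref{eq:ZmF} and push its generators through $\rho$ modulo $K=\langle (12)(34),(13)(24)\rangle$ using Lemma \ref{cross}, with the decisive point being the coset identity $(1324)=(12)\cdot(13)(24)\in (12)K$ (your fallback direct cross-ratio computation, showing the $(1324)$-permuted ordering has cross-ratio $\lambda^{-1}$, is an equally valid way to pin this down). One slip to fix in the $\mF_4'$ case: $D_4\cap K$ is all of $K$, since $(13)(24)=(12)(1324)$ and $(14)(23)$ lie in $\langle (12),(1324)\rangle$, not $\langle (12)(34)\rangle$ — and this only simplifies your argument, giving $|\rho(D_4)|=|D_4/K|=2$ at once, so $H_4'=\langle t\mapsto t^{-1}\rangle$ without any case analysis.
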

\begin{proof} The groups $H_i, H_i'$ are calculated as follows:  From Lemma \ref{cross}, we know  that  i) ker$(\rho) = \langle \sigma_1, \sigma_2 \rangle$, ii)$\rho(\sigma_3)$ is the map $t \mapsto t^{-1}$, and iii) $\rho(\sigma_4)$ is the map $t \mapsto 1/(1-t)$. Using this in \eqref{eq:ZmF}, we get $\rho(Z_{S_4}(\sigma_\phi))$ equals i) $G_*$ in case of $\mF_4$, ii) $H_1$ in case of $\mF_1$, and iii) $H_2$ in case of $\mF_4', \mF_2, \mF_2'$.\end{proof} 

We define some subsets $ \tilde \mN_i$ and  $\mN_i$  of $\overline{\bF_q}$. We will prove in Proposition \ref{Lambda_prop} below that the cross-ratio of a restricted ordering of the roots of $f \in \mF_i, \mF_i'$ will lie in $\tilde \mN_i$ where
\begin{align} \label{eq:Ni_def}
\nonumber  \tilde \mN_4&=  \bF_q\setminus\{0,1\}, \\
\tilde \mN_2&= \{\lambda \in \bF_{q^2}\setminus\{1\} : N_{\bF_{q^2}/\bF_q}(\lambda)=1 \},\\
\nonumber \tilde \mN_1&=\{\lambda \in \bF_{q^3} : \lambda^{q+1}- \lambda^q+1=0 \},\\
\nonumber \mN_i&=\tilde \mN_i \setminus\{ -1\}. 
\end{align}
We note that $|\tilde \mN_i|=(q+2-i)$: this is obvious for $i=2, 4$, and for $i=1$, we observe that for $\bF_{q^3}=\bF_q[\theta]$, the $(q+1)$ values $\lambda_x= (x, \theta; \phi(\theta), \phi^2(\theta) )$ for  $x \in \bF_q \cup \{\infty\}$ are in $\mN_1$ because, 
\[ (\lambda_x)^q =(x,\phi(\theta);  \phi^2(\theta), \theta) =\frac{1}{1-\lambda_x},\] 
and the condition $\lambda^q = 1/(1-\lambda)$ is the same as $\lambda^{q+1} - \lambda^q+1=0$. 
Therefore, $|\mN_i|=q+1-i$.\\

\begin{lem} \label{Ji_def}
The function $\jmath(\lambda)$ induces injective maps $\bar \jmath:  \mN_i/H_i \to \bF_q$. 
 The sets   $J_i = \jmath(\mN_i), i=1,2,4$ partition $\bF_q^\times$ and have sizes:
 \beq  \label{eq:Ji_def} |J_4|=\tfrac{q-3}{6}, \quad |J_2|=\tfrac{q-1}{2}, \quad  |J_1|=\tfrac{q}{3}.  \eeq

\begin{enumerate}
\item The  map $\bar \jmath:  \mN_i/H_i \to J_i$ gives a bijection between $H_i$-orbits on $\mN_i$ and $J_i$.
\item  The  map $\bar \jmath:  \mN_4/H_2 \to J_4$ is surjective and $3$-to-$1$, and hence the number of $H_4'=H_2$-orbits on $\mN_4$ is $3 |J_4|$.
	 \end{enumerate}
\end{lem}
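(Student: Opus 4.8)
The plan is to reinterpret each $\tilde\mN_i$ as the set of $\lambda\in\overline{\bF_q}\setminus\{0,1\}$ on which the Frobenius $\phi$ acts by a generator of $H_i$, and then to run a centralizer computation in $G_*\simeq S_3$. Set $g_4=\mathrm{id}$, $g_2\colon t\mapsto 1/t$ and $g_1\colon t\mapsto 1/(1-t)$, so that $H_4=G_*$, $H_2=\langle g_2\rangle$ has order $2$, and $H_1=\langle g_1\rangle$ has order $3$. Every element of $G_*$ is a linear fractional map with coefficients in $\bF_3$, hence commutes with $\phi$. Using $\lambda^{q+1}=1\Leftrightarrow\lambda^q=1/\lambda$, $\lambda^{q+1}-\lambda^q+1=0\Leftrightarrow\lambda^q=1/(1-\lambda)$, and $\lambda\in\bF_q\Leftrightarrow\lambda^q=\lambda$, the defining equations \eqref{eq:Ni_def} translate into: for $\lambda\in\overline{\bF_q}\setminus\{0,1\}$ one has $\lambda\in\tilde\mN_i$ iff $\phi(\lambda)=g_i(\lambda)$. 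In particular $\tilde\mN_i$ (hence also $\mN_i=\tilde\mN_i\setminus\{-1\}$) is stable under $H_i$, and $\mN_4$ is stable under $G_*$.

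First I would verify that $H_i$ acts freely on $\mN_i$. By Lemma \ref{jmath_lem} the only point of $\overline{\bF_q}\setminus\{0,1\}$ with nontrivial $G_*$-stabilizer is $-1$, which lies outside $\mN_i$; this settles $H_4=G_*$ acting on $\mN_4$ at once, while for $i=2$ the only fixed points of $g_2$ are $\pm1$, and for $i=1$ the fixed points of $g_1$ (and of $g_1^2$) are the roots of $t^2-t+1=(t+1)^2$ in characteristic $3$, i.e. $t=-1$ alone. Combined with $|\mN_i|=q+1-i$, freeness yields $|\mN_i/H_i|=q/3,(q-1)/2,(q-3)/6$ for $i=1,2,4$ respectively.

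The core step is the injectivity of $\bar\jmath$. Since $\jmath$ is constant on $G_*$-orbits it is constant on $H_i$-orbits, so $\bar\jmath\colon\mN_i/H_i\to\overline{\bF_q}$ is well defined with image $\jmath(\mN_i)=J_i$. Take $\lambda,\mu\in\mN_i$ with $\jmath(\lambda)=\jmath(\mu)$. As $\lambda\neq-1$, its $G_*$-orbit has size $6$, so by Lemma \ref{jmath_lem} there is a \emph{unique} $g\in G_*$ with $\mu=g(\lambda)$. Applying $\phi$, and using $\phi(\lambda)=g_i(\lambda)$, $\phi(\mu)=g_i(\mu)$, and $g\phi=\phi g$, gives $g\,g_i(\lambda)=g_i\,g(\lambda)$; freeness of the action at $\lambda$ forces $gg_i=g_ig$. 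Now $g_4=\mathrm{id}$ is central, and in $S_3$ the centralizer of the transposition $g_2$ is $\langle g_2\rangle$ and that of the $3$-cycle $g_1$ is $\langle g_1\rangle$, so $g\in H_i$ and $\lambda,\mu$ lie in one $H_i$-orbit. Thus $\bar\jmath\colon\mN_i/H_i\to J_i$ is a bijection, which gives assertion (1) and pins down $|J_i|$. I expect this centralizer argument to be the only delicate point; it depends on the orbit of $\lambda$ having size exactly $6$, which is precisely why $-1$ must be removed from $\tilde\mN_i$.

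Finally I would handle the partition and assertion (2). For $\lambda\in\mN_i$, $\phi$ permutes the $G_*$-orbit of $\lambda$ (it sends $\lambda$ to $g_i(\lambda)$), so $\jmath(\lambda)$ is $\phi$-fixed, hence lies in $\bF_q$, and it is nonzero because $\lambda\neq-1$; thus $J_i\subseteq\bF_q^\times$. If $r\in J_i\cap J_j$, choosing $\lambda\in\mN_i$, $\mu\in\mN_j$ with $\jmath(\lambda)=\jmath(\mu)=r$ and repeating the computation above gives $\mu=g(\lambda)$ with $g_j=gg_ig^{-1}$, so $g_i$ and $g_j$ are conjugate in $S_3$; since $\mathrm{id}$, $g_2$ and $g_1$ represent the three distinct conjugacy classes of $S_3$, this forces $i=j$. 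Hence the $J_i$ are pairwise disjoint, and since $q/3+(q-1)/2+(q-3)/6=q-1=|\bF_q^\times|$ they partition $\bF_q^\times$. For (2), $H_2\leq H_4=G_*$ has index $3$ and acts freely on $\mN_4$, so every $G_*$-orbit in $\mN_4$ — equivalently, every fibre of the bijection $\bar\jmath\colon\mN_4/G_*\to J_4$ — splits into exactly $3$ orbits of $H_2$; therefore $\bar\jmath\colon\mN_4/H_2\to J_4$ is surjective and $3$-to-$1$, and $|\mN_4/H_2|=3|J_4|$.
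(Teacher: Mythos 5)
Your proof is correct, and it takes a genuinely different route from the paper at the two delicate points, namely the injectivity of $\bar\jmath$ and the disjointness of the $J_i$. You encode the defining equations \eqref{eq:Ni_def} as the single condition $\phi(\lambda)=g_i(\lambda)$ on $\overline{\bF_q}\setminus\{0,1\}$ and then argue inside $G_*\simeq S_3$: the unique $g\in G_*$ carrying $\lambda$ to $\mu$ (uniqueness coming from the orbit of size $6$, i.e.\ Lemma \ref{jmath_lem} plus $\lambda\neq-1$) must commute with $g_i$, hence lies in $H_i=Z_{G_*}(g_i)$; and for two different indices the same computation would make $g_i$ and $g_j$ conjugate, which their distinct orders forbid. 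The paper instead proves disjointness by noting that $\mN_1,\mN_2,\mN_4$ lie in $\bF_{q^3}\setminus\bF_q$, $\bF_{q^2}\setminus\bF_q$ and $\bF_q$ respectively, shows $J_i\subset\bF_q$ via explicit norm-type expressions (e.g.\ $\jmath(\lambda)=N_{\bF_{q^3}/\bF_q}(\lambda-\phi(\lambda))$ on $\mN_1$), and proves injectivity by writing $G_*=H_iH_j$ and checking by hand that the nontrivial elements of $H_j$ move points of $\mN_i$ out of $\mN_i$ except at $\lambda=-1$. Your centralizer/conjugacy argument is more uniform --- one computation handles $i=1,2,4$, gives disjointness, and yields $J_i\subset\bF_q$ for free from the $\phi$-invariance of $\jmath$ --- and it mirrors the Frobenius-centralizer viewpoint the paper itself uses in Lemma \ref{ZS4} and Proposition \ref{stabilizer_prop}; the paper's version is more elementary and makes the exceptional role of $\lambda=-1$ visible equation by equation. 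The remaining steps (freeness away from $-1$, $|\mN_i|=q+1-i$, the count $q/3+(q-1)/2+(q-3)/6=q-1$, and deducing part (2) from the $3$-to-$1$ map $\mN_4/H_2\to\mN_4/H_4$) coincide with the paper's, with your justification of the $3$-to-$1$ claim via freeness being slightly more explicit.
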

\begin{proof}
Since $\jmath(\lambda)$ is constant on $G_*$-orbits and $H_i \subset G_*$,  we get induced maps $\bar \jmath:  \mN_i/H_i \to \overline{\bF_q}$. We first show that $J_1, J_2, J_4$ are disjoint subsets of $\bF_q$:
 We note that the sets $\mN_1 \subset \bF_{q^3}\setminus \bF_q$, $\mN_2 \subset \bF_{q^2}\setminus \bF_q$ and $\mN_4 \subset  \bF_q$. Therefore, if  $\lambda_i \in \mN_i$, $\lambda_j \in \mN_j$ with $i \neq j$,     the $G_*$-orbits  $G_*(\lambda_i)$ and $G_*(\lambda_j)$ are distinct. Thus, $J_1, J_2, J_4$ are disjoint sets of $\overline{\bF_q}$. Clearly $\mN_4 \subset \bF_q$, hence $J_4 \subset \bF_q^\times$.
For $\lambda \in \mN_1$, we can write the expression for $\jmath(\lambda)$ as 
$\jmath(\lambda)=  N_{\bF_{q^3}/\bF_q}(\lambda-\phi(\lambda) )
$  which shows that $J_1 \subset \bF_q \setminus \{0,1728\}$. For $\lambda \in \mN_2$ we can write the expression for $\jmath(\lambda)$ as 
\[ \jmath(\lambda)=\tfrac{(\phi(\lambda) +\lambda-1)^3}{\phi(\lambda)+\lambda +1}
,\]
which shows that $J_2 \subset \bF_q^\times$.\\

Next we show that $\bar \jmath:\mN_i/H_i \to \bF_q$  is injective. If $i=4$, then $H_4=G_*$ and hence $\bar \jmath$ is injective. We now assume $\{i,j\}=\{1,2\}$. 
Suppose  $g \in G_*$ and $\lambda \in \mN_i$ such that $g(\lambda) \in \mN_i$, then we must show 
$g(\lambda)=h(\lambda)$ for some $h \in H_i$.  Since $G_* = H_iH_j$, we may assume $g \in H_j$. If $g$ is the identity element of $H_j$ then we take $h$ to be the identity element of $H_i$.  So we assume $g$ is a nontrivial element of $H_j$. If $i=1$ then $g(\lambda)=1/\lambda$ does not satisfy  $x^{q+1}- x^q+1=0$  unless $q \equiv 1 \mod 3$ and $\lambda=-1 $ which is not in $\mN_1$. If $i=2$ then $g(\lambda)$ is  $1/(1-\lambda)$ or $\lambda/(\lambda-1)$ which do not satisfy the equation $N_{\bF_{q^2}/\bF_q}(x)=1$ 
 unless  $\lambda =-1$ which is not in $\mN_2$.\\

Finally, we show that $J_1,J_2,J_4$ partition $\bF_q^\times$. As shown above, the function  $\jmath(\lambda)$ is  injective on $\mN_i/H_i$. Therefore $|J_i|=|\mN_i/H_i|$.  Since $\mN_4$ has size $(q-3)$ and each $H_4=G_*$ orbit has size $6$ we see that $ |J_4|=\tfrac{q-3}{6}$. Similarly, $|\mN_2| = (q-1)$,   $|\mN_1| = q$,  each $H_2$ orbit on $\mN_2$ has size $2$,  and each $H_1$-orbit on  $\mN_1$ orbit has size $3$. Therefore, $|J_2|=|\mN_2/H_2| =(q-1)/2$ and   $|J_1|=|\mN_1/H_1| =q/3$.
 Since $|J_4|+|J_2|+|J_1|=(q-1)$ we see that the sets $J_4, J_2,J_1$ partition  $\bF_q^\times$. This completes the proof of part (1) of the lemma. \\
 Since $\mN_4/H_2 \to \mN_4/H_4$ is a $3$-to-$1$ map, it follows that $\mN_4/H_2 \to J_4$ is a $3$-to-$1$ surjective map. This proves part (2).
\end{proof}

\subsection{$G$-orbits of forms with zero $\jmath$-invariant}  \label{j_zero_forms}
\begin{prop} \label{j_zero_forms_prop}
The $\jmath$-invariant of a form $f(X,Y)= z_0 X^4+z_1 X^3Y+ \dots+ z_4 Y^4$ with $\Delta(f) \neq 0$ equals $0$  if and only if  $z_2=0$. The size of the set of such  forms is $(q^3-q)=|G|$ and it decomposes into the following $G$-orbits:
\begin{enumerate}
    \item $G\cdot XY(Y^2-X^2)$ in $\mF_4$ of size $|G|/24$
    \item $G\cdot XY(Y^2-\ep X^2)$ in $\mF_2$ of size $|G|/4$.
    \item $G\cdot X(Y^3-YX^2 -r_0 X^3)$ in $\mF_1$ of size $|G|/3$. Here Tr$_{\bF_q/\bF_3}(r_0) \neq 0$.
    \item $G \cdot (Y^2-\epsilon X^2)(Y-\alpha X)(Y - \alpha^q X)$ in $\mF_4'$  has size $|G|/8$. Here  $\alpha \alpha^q=\epsilon$.
\item      $G \cdot  f(X,Y)$ in $\mF_2'$ of size $|G|/4$ where \[f(X,Y)=\begin{cases} 
G\cdot (Y^4-\gamma X^4) &\text{ if $q \equiv 1 \mod 4$},\\
G\cdot (Y^4-X^4 +X^3Y) &\text{ if $q \equiv 3 \mod 4$.}\end{cases}
\]
   
\end{enumerate}
\end{prop}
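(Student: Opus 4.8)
\emph{Stage 1: characterization and count.} The first assertion is immediate: since $\Delta(f)\neq 0$, the formula $\jmath(f)=z_2(f)^6/\Delta(f)$ gives $\jmath(f)=0\iff z_2(f)=0$, and the hyperplane $z_2=0$ is $G$-stable by the remark following \eqref{eq:g_4}. For the count, when $z_2=0$ the discriminant \eqref{eq:Delta_def} reduces to $\Delta(f)=(z_0z_4-z_1z_3)^3$, so the forms in question are exactly the $(z_0,z_1,z_3,z_4)$ with $z_0z_4-z_1z_3\neq 0$, i.e.\ the $(q^2-1)(q^2-q)$ matrices $\bbsm z_0&z_1\\z_3&z_4\besm\in GL_2(q)$; projectivizing gives $(q^2-1)(q^2-q)/(q-1)=q^3-q=|G|$. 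The rest of the proof decomposes this set into the five factorization types $\mF_4,\mF_2,\mF_1,\mF_4',\mF_2'$.

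\emph{Stage 2: the types $\mF_4,\mF_2,\mF_1$.} In characteristic $3$ the $G_*$-orbit of $-1$ is $\{-1\}$ (Lemma \ref{jmath_lem}), so $\jmath(f)=0$ iff the roots of $f$ form a harmonic quadruple, i.e.\ the cross-ratio of every ordering is $-1$. For each of $\mF_4,\mF_2,\mF_1$ --- all of which have a root in $PG(1,q)$ --- I would count these quadruples. For $\mF_4$: by Lemma \ref{cross} the ordered $4$-tuples of distinct points of $PG(1,q)$ with cross-ratio $-1$ form a single $G$-orbit of size $|G|$ (an ordered $4$-tuple has trivial stabilizer), and each harmonic set accounts for all $24$ of its orderings, so there are $|G|/24$ such sets. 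For $\mF_2$ and $\mF_1$: normalize the rational part to $\{0,\infty\}$, resp.\ $\{\infty\}$, by $2$-transitivity, resp.\ transitivity, of $G$ on $PG(1,q)$, and count the completions to a harmonic set. A short cross-ratio computation shows that, for $r\in\bF_{q^2}\setminus\bF_q$, the set $\{0,\infty,r,\phi(r)\}$ is harmonic iff $\phi(r)=-r$, yielding the $q-1$ roots of $x^{q-1}=-1$ and hence $(q-1)/2$ conjugate pairs; and that, for $r\in\bF_{q^3}\setminus\bF_q$, the set $\{\infty,r,\phi(r),\phi^2(r)\}$ is harmonic iff $\mathrm{Tr}_{\bF_{q^3}/\bF_q}(r)=0$ (in characteristic $3$, $-2=1$ turns the arithmetic-progression condition into the trace condition), yielding the $q^2-q$ trace-zero elements outside $\bF_q$ (all of $\bF_q$ lies in the trace kernel) and hence $(q^2-q)/3$ conjugate triples. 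Summing over the $\binom{q+1}{2}$ (resp.\ $q+1$) choices of rational part gives $|G|/4$ (resp.\ $|G|/3$) harmonic quadruples of type $\mF_2$ (resp.\ $\mF_1$). By Proposition \ref{stabilizer_prop} (the cases $\jmath(f)=0$), a form of type $\mF_4,\mF_2,\mF_1$ with $\jmath=0$ has stabilizer of order $24,4,3$; so the $G$-orbits inside each set have size $|G|/24,|G|/4,|G|/3$, which equals the cardinality of the set itself. Hence each of these three sets is a single $G$-orbit, represented respectively by $XY(Y^2-X^2)$, $XY(Y^2-\ep X^2)$, and $X(Y^3-X^2Y-r_0X^3)$ with $\mathrm{Tr}_{\bF_q/\bF_3}(r_0)\neq 0$ (so that $Y^3-Y-r_0$ is irreducible with roots $\theta,\theta\pm 1$ in arithmetic progression); one checks directly that each has $z_2=0$, $\Delta\neq 0$ and the asserted factorization.

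\emph{Stage 3: the types $\mF_4',\mF_2'$ (the main obstacle).} Here there is no rational root, so the transitivity argument is unavailable; I would instead verify the listed representatives and close the argument by counting. One checks that $(Y^2-\ep X^2)(Y-\alpha X)(Y-\alpha^qX)=(Y^2-\ep X^2)\bigl(Y^2-\mathrm{Tr}_{\bF_{q^2}/\bF_q}(\alpha)XY+\ep X^2\bigr)$ with $N_{\bF_{q^2}/\bF_q}(\alpha)=\ep$ has its two $X^2Y^2$-contributions cancel, is a product of two distinct irreducible quadratics when $\mathrm{Tr}_{\bF_{q^2}/\bF_q}(\alpha)\neq 0$ (such $\alpha$ exists for $q\geq 3$), and has $\Delta=\ep^3(\mathrm{Tr}_{\bF_{q^2}/\bF_q}(\alpha)^2-\ep)^3\neq 0$; similarly $Y^4-\gamma X^4$ (for $q\equiv 1\bmod 4$) and $Y^4-X^4+X^3Y$ (for $q\equiv 3\bmod 4$) are irreducible over $\bF_q$ by the standard criteria for binomials $x^4-a$ and for Artin--Schreier-type quartics, with $z_2=0$ and $\Delta\neq 0$. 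By Proposition \ref{stabilizer_prop} their stabilizers have orders $8$ and $4$, so their orbits have sizes $|G|/8$ and $|G|/4$. Since $\tfrac1{24}+\tfrac14+\tfrac13+\tfrac18+\tfrac14=1$, the five orbits produced so far have total size $|G|$, which equals $|\{z_2=0,\ \Delta\neq 0\}|$; hence there are no further orbits, and in particular $\mF_4'$ and $\mF_2'$ each contribute exactly one, with the stated sizes and representatives. (Alternatively, $\mF_4'$ can be counted head-on: $G$ is transitive on conjugate pairs of $\bP^1(\bF_{q^2})$, one finds $\{\sqrt\ep,-\sqrt\ep,\alpha,\phi(\alpha)\}$ harmonic iff $N_{\bF_{q^2}/\bF_q}(\alpha)=\ep$, giving $(q+1)/2$ completing pairs, and the factor $\tfrac12$ for the two conjugate pairs in a type-$\mF_4'$ set yields $\tfrac12\cdot\tfrac{q^2-q}{2}\cdot\tfrac{q+1}{2}=|G|/8$; no such shortcut is available for $\mF_2'$, since $G$ is not transitive on irreducible quartic forms.)
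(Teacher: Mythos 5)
Your proposal is correct in substance, and its load-bearing skeleton is the same as the paper's: verify that the five listed representatives have $z_2=0$, $\Delta\neq 0$ and the stated factorization types, read off the orbit sizes $|G|/24,\ |G|/4,\ |G|/3,\ |G|/8,\ |G|/4$ from the stabilizers in Proposition \ref{stabilizer_prop}, and close by observing that these sizes sum to $|G|=|\{z_2=0,\ \Delta\neq 0\}|$, so the five orbits exhaust the locus. Two of your ingredients differ from the paper and are worth noting. First, your count of the locus (identifying $\{z_2=0,\ \Delta\neq 0\}$ with nonsingular $2\times 2$ matrices modulo scalars, since $\Delta=(z_0z_4-z_1z_3)^3$ there) is slicker than the paper's subtraction $(1+q+q^2+q^3)-(1+q)^2$. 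Second, your Stage 2 --- counting harmonic quadruples type by type, using the characteristic-$3$ collapse of the anharmonic orbit of $-1$ to a single point, the condition $\phi(r)=-r$ for $\mF_2$ and the trace-zero condition for $\mF_1$ --- proves directly that each of the three strata with a rational root is a single orbit, independently of the final exhaustion count; the paper obtains this only a posteriori from the global count. This buys a more self-contained treatment of $\mF_4,\mF_2,\mF_1$ (and your parenthetical norm computation does the same for $\mF_4'$), at the cost of some redundancy, since the closing count already forces a single orbit in every stratum.

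The one step that does not hold up as written is the irreducibility of $Y^4-X^4+X^3Y$ over $\bF_q$ for $q\equiv 3 \bmod 4$. You invoke ``standard criteria for Artin--Schreier-type quartics'', but in characteristic $3$ the dehomogenization $y^4+y-1$ is not an Artin--Schreier polynomial (those are of the form $y^3-y-a$), and no off-the-shelf criterion applies; a quartic with no rational roots can still split into two rational irreducible quadratics, so something must actually be checked. The paper does this by exhibiting the four roots explicitly as $\imath+(1+\imath)\sqrt{\imath-1}$ (both choices of $\imath$ and of the square root), noting that $\imath\notin\bF_q$ since $q\equiv 3\bmod 4$ and that $\imath-1$ is a nonsquare in $\bF_{q^2}$ because $N_{\bF_{q^2}/\bF_q}(\imath-1)=-1$ is a nonsquare in $\bF_q$; hence every root lies in $\bF_{q^4}\setminus\bF_{q^2}$ and the quartic is irreducible. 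You would need this computation, or an equivalent direct check ruling out rational roots and rational quadratic factors, to complete your Stage 3. The binomial case $Y^4-\gamma X^4$ for $q\equiv 1\bmod 4$ is fine, since the classical criterion for $x^4-a$ genuinely applies there, and the rest of your argument (including the $\mF_4'$ representative, where the discriminant $\ep^3(\mathrm{Tr}_{\bF_{q^2}/\bF_q}(\alpha)^2-\ep)^3$ is automatically nonzero) is sound.
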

\begin{proof} Since $\jmath(f)  = z_2(f)^6/\Delta(f)$, it follows that $\jmath(f)=0$ if and only if $z_2(f)=0$. 
The number of forms with $z_2(f)=0$ is clearly $(1+q+q^2+q^3)$. From \eqref{eq:Delta_def}, we see that a form $f$ with $z_2(f)=0$ satisfies $\Delta(f)=0$ if and only if $z_0z_3-z_1z_2 = 0$. Thus, the number of forms with $z_2(f)=\Delta(f)=0$ is $(1+q)^2$, and hence, the number of forms in $\mF\setminus \mF^0$ with $\jmath(f)=0$ is $(1+q+q^2+q^3) - (1+q)^2=q^3-q$.\\

The roots of $XY(Y^2-X^2)$ and $XY(Y^2-\ep X^2)$ are  $\infty, 0, 1,-1$ and $\infty, 0, \sqrt\ep,-\sqrt\ep$ respectively, and hence these forms are in $\mF_4$ and $\mF_2$ respectively.  The polynomial $T^3-T-r_0$ has no roots in $\bF_q$ when Tr$_{\bF_q/\bF_3}(r_0) \neq 0$ (by \cite[Corollary 1.23]{Hirschfeld2}) and hence $X(Y^3-YX^2 -r_0 X^3)$ is in $\mF_1$. All the four roots of $(Y^2-\epsilon X^2)(Y-\alpha X)(Y - \alpha^q X)$ are in $\bF_{q^2}\setminus\bF_q$ and hence this form is in $\mF_4'$. Finally, if $q \equiv 1 \mod 4$ then all the roots $\pm \gamma^{1/4}, \pm \imath \gamma^{1/4}$ of $Y^4 -\gamma X^4$ are in $\bF_{q^4}\setminus\bF_{q^2}$ and hence this form is in $\mF_2'$.
If $q \equiv 3 \mod 4$, then  the roots of $f(X,Y)=Y^4 - X^4 +X^3Y$ are 
 $\imath + (1+\imath)\sqrt{\imath-1}$ for each of the two choices of $\imath$, and each of the two choices of $\sqrt{\imath-1}$, and hence the splitting field of $f(X,Y)$ is $\bF_{q^4}$, i.e. $f\in \mF_2'$. ( We note that $N_{\bF_{q^2}/\bF_q}(\imath-1)=-1$ is a non-square in $\bF_q$ and hence $(\imath-1)$ is a non-square in $\bF_{q^2}$.)\\

We note that the coefficient of $X^2Y^2$ in all the five  representative forms in the list above are zero, and hence these forms have zero $\jmath$-invariant.  They represent distinct orbits, as they are in  distinct  types $\mF_4,\mF_2,\mF_1, \mF_4', \mF_2'$. The sizes of these orbits follow from the size of the stabilizers given in Proposition \ref{stabilizer_prop}. Finally, the sizes of these orbits add up to $|G|$, and hence these orbits exhaust the set of forms with $\jmath$-invariant zero.
\end{proof}

\subsection{$G$-orbits of forms with nonzero $\jmath$-invariant}  \label{j_nonzero_forms}
As noted after Definition \ref{Fi_def}, the sizes of $\mF_4, \mF_4', \mF_2, \mF_2'$ and $\mF_1$ are $\tfrac{(q-2)|G|}{24}$,  $\tfrac{(q-2)|G|}{8}$,  $\tfrac{q|G|}{4}$,   $\tfrac{q |G|}{4}$, and   $\tfrac{(q+1)|G|}{3}$, respectively. The size of the subset of forms of $\jmath$-invariant zero, in these five types of forms are $|G|/24$, $|G|/8$, $|G|/4$, $|G|/4$ and $|G|/3$, respectively, as shown in Proposition \ref{j_zero_forms}. Thus, the number of forms with non-zero $\jmath$-invariant in these five types are: $\tfrac{(q-3)|G|}{24}$,  $\tfrac{(q-3)|G|}{8}$, $\tfrac{(q-1)|G|}{4}$, $\tfrac{(q-1)|G|}{4}$, and $\tfrac{q|G|}{3}$, respectively. The stabilizer of a form in each of these five types has size $4,4,2,2$, and $1$, respectively, as calculated in Proposition \ref{stabilizer_prop}. Therefore, the number of orbits of forms with non-zero $\jmath$-invariant in $\mF_4, \mF_4', \mF_2, \mF_2'$ and $\mF_1$ are
$\tfrac{(q-3)}{6}$,  $\tfrac{(q-3)}{2}$, $\tfrac{(q-1)}{2}$, $\tfrac{(q-1)}{2}$, and $\tfrac{q}{3}$, respectively.\\

\begin{prop} \label{Lambda_prop}
Given $f \in \mF_i$  the set $\Lambda_f$ of cross-ratios of restricted orderings $\mathrm{ord}(f)$ of the roots of $f$,  forms a single $H_i$-orbit   in $\tilde \mN_i$. Similarly, for $f \in \mF_i'$ with $i\in \{1,2\}$, the set $\Lambda_f$ of cross-ratios of restricted orderings $\mathrm{ord}(f)$ of the roots of $f$,  forms a single $H_i'$-orbit   in $\tilde \mN_i$.  \\
Let $\mF_i/G$  (resp. $\mF_i'/G$) denote the set of $G$-orbits on $\mF_i$  (resp. $\mF_i'$).  The map that takes $f \in \mF_i$ (resp. $f \in \mF_i'$) to   $\Lambda_f$   in  $\tilde \mN_i$ induces a bijective map
\begin{align*} 
\mF_i/G  &\leftrightarrow \tilde\mN_i/H_i, \quad i=1,2,4.\\
\mF_i'/G  &\leftrightarrow \tilde\mN_i/H_i',  \quad i=2,4.
  \end{align*}
\end{prop}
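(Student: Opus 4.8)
The plan is to prove the two assertions in sequence: first that $\Lambda_f$ is a single $H_i$-orbit (resp. $H_i'$-orbit) inside $\tilde\mN_i$, and then that $f \mapsto \Lambda_f$ descends to a bijection of $G$-orbit sets. For the first assertion, fix $f$ and a restricted ordering $(r_1,r_2,r_3,r_4) \in \mathrm{ord}(f)$ with cross-ratio $\lambda_f$. By Lemma \ref{ZS4}, the set $\mathrm{ord}(f)$ is a single $Z(\mF)$-orbit, so every restricted ordering is $(r_{\sigma(1)},\dots,r_{\sigma(4)})$ for some $\sigma \in Z(\mF) = Z_{S_4}(\sigma_\phi(\mF))$; by Lemma \ref{cross}, the cross-ratio of this reordering is $\rho(\sigma)(\lambda_f)$, and as $\sigma$ ranges over $Z(\mF)$ this ranges exactly over $\rho(Z_{S_4}(\sigma_\phi))(\lambda_f) = H_i(\lambda_f)$ (resp. $H_i'(\lambda_f)$) by the definition of $H_i, H_i'$. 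Hence $\Lambda_f = H_i \cdot \lambda_f$. It remains to check $\lambda_f \in \tilde\mN_i$: this is the computation that the defining restriction on the ordering forces the corresponding algebraic constraint on $\lambda_f$ recorded in \eqref{eq:Ni_def}. For instance, for $f \in \mF_2$ with ordering $(r_1,r_2,r_3,\phi(r_3))$ and $r_1,r_2 \in PG(1,q)$, applying $\phi$ permutes the roots by $\sigma_\phi = (34)$, so $\phi(\lambda_f) = \rho((34))(\lambda_f) = \lambda_f^{-1}$, i.e. $\lambda_f^{q+1}=1$, which is exactly $N_{\bF_{q^2}/\bF_q}(\lambda_f)=1$; the $\mF_4'$, $\mF_2'$, $\mF_1$ cases are the analogous one-line computations with $\sigma_\phi$ equal to $(12)(34)$, $(1324)$, $(234)$ respectively, and $\mF_4$ is trivial since then $\lambda_f \in \bF_q$.

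For the second assertion, I would first check well-definedness: if $f' = g \cdot f$ for $g \in G$, then $g$ carries $\mathrm{ord}(f)$ to $\mathrm{ord}(g\cdot f)$ (noted just after Definition \ref{restricted_ordering}), and since $g \in PGL_2(q) \subset PGL_2(F)$ preserves cross-ratios, $\Lambda_{f'} = \Lambda_f$; thus $f \mapsto \Lambda_f$ factors through $\mF_i/G \to \tilde\mN_i/H_i$. Surjectivity: given $\lambda \in \tilde\mN_i$, exhibit a form $f_\lambda$ whose roots admit a restricted ordering with cross-ratio $\lambda$. Concretely, for $\mF_4$ take the four points $\infty, 0, 1, \lambda$ of $PG(1,q)$; for $\mF_2$ take $\infty, 0$ together with a Galois-conjugate pair $\{r_3, \phi(r_3)\}$ in $\bF_{q^2}\setminus\bF_q$ chosen so that $(\infty,0;r_3,\phi(r_3)) = \lambda$, which is possible precisely because $\lambda \in \tilde\mN_2$ means $\phi(\lambda) = \lambda^{-1}$, forcing $\phi(r_3)$ to be the partner root; similarly for $\mF_4'$ (two conjugate pairs), $\mF_2'$ (a single Frobenius-orbit of length $4$, using $\lambda^q = 1/(1-\lambda)$ patterns appropriate to $\sigma_\phi = (1324)$), and $\mF_1$ (one $\bF_q$-point plus a conjugate triple, using $\lambda^{q+1}-\lambda^q+1 = 0$). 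In each case one must verify the resulting $f_\lambda$ genuinely has the claimed factorization type and that the exhibited ordering is a restricted one — this is where Lemma \ref{cross} (parametrization of $PGL_2$-orbits of ordered $4$-tuples by cross-ratio) does the real work, guaranteeing such a $4$-tuple exists over $\overline{\bF_q}$, and the Frobenius-compatibility of $\lambda$ guarantees the tuple is defined over the right field.

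Injectivity is the crux, and I expect it to be the main obstacle. Suppose $f, f' \in \mF_i$ have $\Lambda_f = \Lambda_{f'}$, i.e. the cross-ratios of restricted orderings of $f$ and $f'$ lie in the same $H_i$-orbit. Pick restricted orderings $(r_1,\dots,r_4)$ of $f$ and $(r_1',\dots,r_4')$ of $f'$; after replacing the second ordering by another restricted ordering (allowed, since $\mathrm{ord}(f')$ is a single $Z(\mF)$-orbit and $\rho(Z(\mF)) = H_i$) we may assume the two cross-ratios are \emph{equal}. By Lemma \ref{cross}, there is then a unique $g \in PGL_2(\overline{\bF_q})$ with $g \cdot (r_1,\dots,r_4) = (r_1',\dots,r_4')$. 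It remains to show $g \in PGL_2(q)$, equivalently $\phi(g) = g$; but $\phi(g)$ sends $(\phi(r_1),\dots,\phi(r_4))$ to $(\phi(r_1'),\dots,\phi(r_4'))$, and because both orderings are \emph{restricted} these Frobenius-images are the $\sigma_\phi(\mF)$-reorderings of the originals — the \emph{same} permutation $\sigma_\phi$ on both sides — so $\phi(g)$ also sends $(r_1,\dots,r_4)$ to $(r_1',\dots,r_4')$, and uniqueness in Lemma \ref{cross} gives $\phi(g) = g$. Hence $g \in G$ and $g \cdot f = f'$ (as $g$ maps the root set of $f$ to that of $f'$ with matching multiplicities), so $f$ and $f'$ are in the same $G$-orbit. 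The delicate point throughout is bookkeeping the fact that "restricted ordering" is exactly engineered so that $\phi$ acts by one fixed permutation $\sigma_\phi(\mF)$ independent of $f$; this is what makes the descent and the Galois-descent argument for injectivity go through uniformly, and it is worth stating cleanly before running the three cases $\mF_i$ ($i=1,2,4$) and $\mF_i'$ ($i=2,4$) in parallel.
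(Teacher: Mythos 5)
Your first assertion (that $\Lambda_f=H_i\cdot\lambda_f$, via Lemma \ref{ZS4} and the definition of $H_i,H_i'$ as $\rho(Z_{S_4}(\sigma_\phi))$) and your injectivity argument (pick restricted orderings with equal cross-ratio, get a unique $g\in PGL_2(\overline{\bF_q})$ from Lemma \ref{cross}, and use that $\phi$ acts on both sides of ${\rm ord}$ by the \emph{same} permutation $\sigma_\phi$ to force $\phi(g)=g$) are exactly the paper's proof, just written out in more detail. Your derivation of $\lambda_f\in\tilde\mN_i$ via the equivariance $\phi(\lambda_f)=\rho(\sigma_\phi)(\lambda_f)$ is actually cleaner and more uniform than the paper's case-by-case computations with explicit norms/traces of $\beta$, and it is correct — though note that in your surjectivity paragraph you quote the wrong condition for $\mF_2'$: since $(1324)(13)(24)=(12)$ and $(13)(24)\in\ker\rho$, one has $\rho((1324))=\rho((12))\colon\lambda\mapsto\lambda^{-1}$, so for $f\in\mF_2'$ the constraint is $\lambda_f^{q+1}=1$, i.e.\ $\lambda_f\in\tilde\mN_2$ (as the proposition asserts), not the $\tilde\mN_1$-type relation $\lambda^q=1/(1-\lambda)$.

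The genuine gap is surjectivity. The paper does not construct a form for each $\lambda$ at this point: it counts. Having already computed the number of $G$-orbits with nonzero $\jmath$ in each of $\mF_4,\mF_2,\mF_1,\mF_4',\mF_2'$ (from $|\mF_i|$, the $\jmath=0$ counts of Proposition \ref{j_zero_forms_prop}, and the stabilizer orders of Proposition \ref{stabilizer_prop}) and the number of $H_i$- resp.\ $H_i'$-orbits on $\tilde\mN_i$, it observes the injective map goes between finite sets of equal cardinality, hence is bijective. Your constructive route works painlessly for $\mF_4$ ($\infty,0,1,\lambda$), for $\mF_1$ (the $q+1$ values $(x,\theta;\phi(\theta),\phi^2(\theta))$ already exhaust $\tilde\mN_1$), and with a short Hilbert--90-type argument for $\mF_2$ (solve $r_3^{q-1}=\lambda$) and $\mF_4'$ (solve $N_{\bF_{q^2}/\bF_q}\bigl(\tfrac{\alpha-\sqrt\ep}{\alpha+\sqrt\ep}\bigr)=\lambda$, checking $\alpha\notin\bF_q$ using $\lambda\neq 1$); but your justification ``the Frobenius-compatibility of $\lambda$ guarantees the tuple is defined over the right field'' is not a proof, and for $\mF_2'$ it papers over the hardest case: given $\lambda$ with $\lambda^{q+1}=1$ you must produce $r_1\in\bF_{q^4}\setminus\bF_{q^2}$ whose Frobenius orbit, in the restricted order $(r_1,\phi^2(r_1),\phi(r_1),\phi^3(r_1))$, has cross-ratio in $\{\lambda,\lambda^{-1}\}$. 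Lemma \ref{cross} only gives a $4$-tuple over $\overline{\bF_q}$; making it a single length-$4$ Frobenius orbit requires either an explicit construction (this is precisely what the paper spends its ``Representatives'' subsection on, with separate families for $q\equiv 1$ and $q\equiv 3\bmod 4$) or a genuine descent argument (a twisted Galois action together with vanishing of $H^1$ for $PGL_2$ over finite fields), neither of which appears in your sketch. Either supply such an argument for $\mF_2'$ (and the easy existence lemmas for $\mF_2,\mF_4'$), or replace the whole surjectivity step by the paper's counting comparison, which you already have all the ingredients for.
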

\bep
Let $f \in \mF_i$ (resp. $\mF_i'$). First, we show that $\Lambda_f$ forms a single $H_i$-orbit  (resp $H_i'$-orbit)  in $\overline{\bF_q}$. We consider the map ord$(f) \to \Lambda_f$ that takes a restricted ordering $(r_1, r_2, r_3, r_4)$ of the roots of $f$ to its cross-ratio $\lambda_f=(r_1, r_2; r_3, r_4)$. As shown in Lemma \ref{ZS4}, the set ord$(f)$ is the orbit  $Z_{S_4}(\sigma_\phi(\mF)) \cdot (r_1, r_2, r_3, r_4)$ and $\rho(Z_{S_4}(\sigma_\phi(\mF)))$ equals $H_i$ if $\mF \in \{\mF_1, \mF_2, \mF_4\}$ 
and $H_i'$ if $\mF \in \{\mF_2', \mF_4'\}$. Thus, $\Lambda_f = H_i \cdot \lambda_f$ if $f \in \mF_i$ and $\Lambda_f = H_i' \cdot \lambda_f$  if $f \in \mF_i'$.\\
 Next we show that $\lambda_f \in \tilde \mN_i$ for $f \in \mF_i \cup \mF_i'$.
\begin{enumerate}
\item If $f \in \mF_4$ then clearly  $\lambda_f \in \bF_q \setminus\{0,1\}=\tilde \mN_4$.
\item  If  $f\in \mF_4'$ then   $\lambda_f=N_{\bF_{q^2}/\bF_q}(\beta)$ where $\beta = \tfrac{r_3-r_1}{r_3-\phi(r_1)}$. Hence, $\lambda_f \in  \tilde \mN_4$. 
\item If  $f\in \mF_{2}$  then  $\lambda_f=\tfrac{\beta}{\phi(\beta)}$ where $\beta=(r_3-r_1)( \phi(r_3)- r_2)$. Hence, $\lambda_f\in  \tilde \mN_2$. 
\item If  $f\in \mF_2'$  then $\lambda_f=\tfrac{\beta}{\phi(\beta)}$ where $\beta=N_{\bF_{q^4}/\bF_{q^2}}(\phi(r_1)-r_1)$. Hence, $\lambda_f\in  \tilde \mN_2$. 
\item If  $ f \in\mF_{1}$  then  $\lambda_f=\tfrac{-\beta}{\phi(\beta)}$ where $\beta=(\phi(r_2)-r_1)(\phi^2(r_2)-r_2)$ satisfies 
\[\lambda_f^q(1-\lambda_f)=\tfrac{\beta+\phi(\beta)}{-\phi^2(\beta)}=\tfrac{r_2(\phi^2(r_2)-\phi(r_2)) -r_1(\phi^2(r_2)-\phi(r_2))}{r_2(\phi^2(r_2)-\phi(r_2)) -r_1(\phi^2(r_2)-\phi(r_2))}=1.\]
Thus, $\lambda_f \in  \tilde \mN_1$.
\end{enumerate}

Next we show that for $f \in \mF_i$, the map $f \mapsto H_i \cdot \lambda_f$ induces an injective map  $\mF_i/G\to \tilde \mN_i/H_i$ and similarly for $f \in \mF_i'$, the map $f \mapsto H_i' \cdot \lambda_f$ induces an injective map  $\mF_i'/G\to \tilde \mN_i/H_i'$. Suppose   $f, \tilde f \in \mF_i$ (resp. $\mF_i'$) with $\Lambda_f = \Lambda_{\tilde f}=H_i \cdot \lambda $ (resp. $H_i' \cdot \lambda$) then there exist $(r_1, \dots, r_4) \in \text{ord}(f)$ and $(\tilde r_1, \dots, \tilde r_4) \in \text{ord}(\tilde f)$  such that $\lambda = (r_1,r_2; r_3, r_4)=(\tilde r_1,\tilde r_2;\tilde r_3, \tilde r_4)$. By Lemma \ref{cross}, there exists a unique   $g \in PGL_2(\overline{\bF_q})$ which carries $(r_1,r_2,r_3,r_4)$ to $(\tilde r_1,\tilde r_2,\tilde r_3,\tilde r_4)$. However, from the definition of a restricted ordering, it follows that $\phi(g)$ also has  the same property, and hence $g \in PGL_2(q)$ by the uniqueness of $g$.  This shows that $\tilde f \in G \cdot f$. Thus, we have shown that the map $G \cdot f \mapsto \Lambda_f$ gives  injective maps   $\mF_i/G  \rightarrow \tilde \mN_i/H_i$ and  $\mF_i'/G \rightarrow \tilde \mN_i/H_2$. \\
It now remains to show that the maps $\mF_i/G \to \tilde \mN_i/H_i$ and $\mF_i'/G \to \tilde \mN_i/H_i'$ are surjective. We note that $\{-1\}$ forms an $H_i$-orbit in $\tilde \mN_i$, as well as an $H_i'$-orbit in $\tilde \mN_i$.  As mentioned in Lemma \ref{jmath_lem}, the stabilizer of each element in $\overline{\bF_q}$ under the $G_*$-action is trivial, and hence each $H_i$-orbit in $\mN_i$ has size $|H_i|$, and each $H_i'$-orbit in $\mN_i$ has size $H_i$. Thus $|\mN_i/H_i|=|\mN_i|/|H_i|$ equals $\tfrac{q-3}{6}, \tfrac{q-1}{2}$ and $\tfrac{q}{3}$ for $i=4,2$ and $1$, respectively. Similarly, 
$|\mN_i/H_i'|=|\mN_i|/|H_i'|$ equals $\tfrac{q-3}{2}$ and $\tfrac{q-1}{2}$ for $i=4$ and $2$ respectively. On the other hand, there is one orbit $G \cdot f$ in each $\mF \in \{\mF_4, \mF_2, \mF_1, \mF_4', \mF_2'\}$ given in Proposition \ref{j_zero_forms_prop}
with $\Lambda_f=\{-1\}$ (these forms have zero $\jmath$-invariant, and 
$\jmath(\lambda)= \frac{(\lambda+1)^6}{\lambda^2(\lambda-1)^2}=0$ if and only if $\lambda=-1$). We have also shown above that  the number of orbits of forms with non-zero $\jmath$-invariant in $\mF_4, \mF_2, \mF_1, \mF_4'$ and $\mF_2'$ are
$\tfrac{(q-3)}{6}$, $\tfrac{(q-1)}{2}$, $\tfrac{q}{3}$, $\tfrac{(q-3)}{2}$, and $\tfrac{(q-1)}{2}$, respectively. This completes the proof that the maps $\mF_i/G \to \tilde \mN_i/H_i$ and $\mF_i'/G \to \tilde \mN_i/H_i'$ are surjective.\end{proof}

For $r \in \bF_q^\times$, we define a quartic form $E_r$:
\beq \label{eq:E_r_def} E_r(X,Y)=X(Y^3-Y^2X+r^{-1} X^3), \quad r \in \bF_q^\times. \eeq
The $\jmath$-invariant of $E_r$ is $r$, and hence $G \cdot E_r$ represent $(q-1)$ distinct $G$-orbits in $\mF_4 \cup \mF_2  \cup \mF_1$. Since there are $\tfrac{(q-3)}{6}$, $\tfrac{(q-1)}{2}$, $\tfrac{q}{3}$ such orbits, we see that $\{G \cdot E_r \colon r \in \bF_q^\times\}$ is the complete set of $G$-orbits in $\mF_4 \cup \mF_2 \cup \mF_1$.

\begin{lem}\label{Ji_lem} The parts $J_i$  of the partition  $\bF_q^\times = J_4 \cup J_2 \cup J_1$ defined in Lemma \ref{Ji_def} can also be characterized as:
\begin{align*}
    J_2&=\{ r\in \bF_q^\times \colon r  \text{ is a non-square in $\bF_q$}\},\\
    J_4&=\{ r\in \bF_q^\times \colon r  \text{ is a square in $\bF_q$ and Tr$_{\bF_q/\bF_3}(1/\sqrt{r})=0$}\},\\
J_1&=\{ r\in \bF_q^\times \colon r  \text{ is a square in $\bF_q$ and Tr$_{\bF_q/\bF_3}(1/\sqrt{r})\neq 0$}\}.
\end{align*}
Further, the quartic form $E_r$ is in $\mF_i$ according as $r\in J_i$, where $i\in\{1,2,4\}$.
\end{lem}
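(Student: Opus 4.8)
\textbf{Proof plan for Lemma \ref{Ji_lem}.}

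The plan is to analyze the factorization type of the explicit form $E_r(X,Y) = X(Y^3 - Y^2 X + r^{-1} X^3)$ directly, since this form is already known (from the discussion preceding the lemma) to represent every $G$-orbit of forms with nonzero $\jmath$-invariant lying in $\mF_4 \cup \mF_2 \cup \mF_1$, with $\jmath(E_r) = r$. Because $E_r$ has the linear factor $X$, its type is governed entirely by the factorization of the cubic $c_r(Y) := Y^3 - Y^2 + r^{-1}$ over $\bF_q$ (dehomogenizing by setting $X = 1$): $E_r \in \mF_4$ iff $c_r$ splits into three linear factors, $E_r \in \mF_2$ iff $c_r$ is irreducible of degree... wait, $E_r \in \mF_2$ iff $c_r$ has one linear and one irreducible quadratic factor, and $E_r \in \mF_1$ iff $c_r$ is irreducible over $\bF_q$. (Note $c_r$ has no repeated roots since $\Delta(E_r) \neq 0$.) So the whole problem reduces to deciding, as a function of $r \in \bF_q^\times$, the splitting type of $Y^3 - Y^2 + r^{-1}$.

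First I would perform a change of variable to put the cubic in Artin--Schreier-friendly form. Substituting $Y = 1/(u)$ or better $Y \mapsto$ (an affine substitution killing the quadratic term is unavailable in characteristic $3$ for a monic cubic with nonzero $Y^2$-coefficient, so instead) one checks directly: the roots of $c_r$ are $Y$ with $Y^2(Y-1) = -r^{-1}$, i.e. setting $Y = 1/w$ gives $w^{-2}(w^{-1} - 1) = -r^{-1}$, i.e. $1 - w = -r^{-1} w^3$, i.e. $w^3 = r(w-1)$; more usefully, substitute $Y = v + $ something. The cleanest route: the splitting field of a cubic over a field of characteristic $3$ is controlled by whether its discriminant is a square (distinguishing the Galois group $\bZ/3$ or $A_3$ case from $S_3$) and, in the cyclic case, whether it actually splits. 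I would compute the discriminant of $c_r(Y) = Y^3 - Y^2 + r^{-1}$, which is $\operatorname{disc} = -4 a^3 c - 27 c^2 + \dots$ specialized to characteristic $3$; the standard formula for $Y^3 + pY + q$ does not apply directly, but for $Y^3 + a_2 Y^2 + a_0$ one gets $\operatorname{disc} = -4 a_2^3 a_0 - 27 a_0^2 = -4 a_2^3 a_0$ in characteristic $3$ (the $27 a_0^2$ term vanishes), so $\operatorname{disc}(c_r) = -4 \cdot (-1)^3 \cdot r^{-1} = 4 r^{-1} = r^{-1}$ up to squares. Hence $\operatorname{disc}(c_r)$ is a square in $\bF_q$ iff $r$ is a square. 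This immediately gives: $r$ a non-square $\iff$ Galois group is $S_3$ $\iff$ $c_r$ factors as linear times irreducible quadratic $\iff$ $E_r \in \mF_2$, which is exactly the claimed description of $J_2$.

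When $r$ is a square, write $r = s^2$; then $c_r$ either splits completely ($E_r \in \mF_4$) or is irreducible with cyclic Galois group ($E_r \in \mF_1$), and I must distinguish these by a trace condition. The key step is to transform $c_r$ into an Artin--Schreier cubic $T^3 - T - \beta$: one solves for the substitution $Y = \alpha T + \gamma$ making $Y^3 - Y^2 + r^{-1}$ proportional to $T^3 - T - \beta$. Expanding $(\alpha T + \gamma)^3 - (\alpha T + \gamma)^2 + r^{-1} = \alpha^3 T^3 + (\gamma^3 - \gamma^2 + r^{-1}) - (2\alpha\gamma - \dots) T - \alpha^2 T^2$; in characteristic $3$, $(\alpha T + \gamma)^3 = \alpha^3 T^3 + \gamma^3$, so the cubic becomes $\alpha^3 T^3 - \alpha^2 T^2 - 2\alpha\gamma T + (\gamma^3 - \gamma^2 + r^{-1})$ — the $T^2$ term $-\alpha^2 T^2$ survives and cannot be removed, so a pure affine substitution fails. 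Instead I would first substitute $Y \mapsto 1/Y$ (projectively, this corresponds to reordering the roots, which does not change the splitting type) to pass to $r^{-1} Y^3 - Y + 1$, i.e. $Y^3 - rY + r$ after scaling; this cubic $Y^3 - rY + r$ is now depressed (no $Y^2$ term), and over characteristic $3$ the substitution $Y = sT$ with $s^2 = r$ turns it into $s^3 T^3 - s^3 T + r = s^3(T^3 - T + r/s^3) = s^3(T^3 - T + 1/s)$ (using $r = s^2$). So $c_r$ (up to reordering roots and scaling) is the Artin--Schreier cubic $T^3 - T + s^{-1} = T^3 - T - (-s^{-1})$. By \cite[Corollary 1.23]{Hirschfeld2} this splits over $\bF_q$ iff $\operatorname{Tr}_{\bF_q/\bF_3}(-s^{-1}) = 0$, equivalently $\operatorname{Tr}_{\bF_q/\bF_3}(1/\sqrt r) = 0$ (the sign and the choice of square root do not affect whether the trace vanishes, since $\operatorname{Tr}(-x) = -\operatorname{Tr}(x)$ and replacing $s$ by $-s$ negates the argument). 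This gives exactly the claimed descriptions of $J_4$ and $J_1$, and the final sentence "$E_r \in \mF_i$ according as $r \in J_i$" follows by collating the three cases. Finally, I should check consistency: the set of $r$ with $E_r \in \mF_i$ must coincide with the $J_i$ defined via cross-ratios in Lemma \ref{Ji_def}; this follows because $\jmath(E_r) = r$ identifies the orbit $G \cdot E_r$ with the $H_i$-orbit of its cross-ratio in $\mN_i$, and Proposition \ref{Lambda_prop} matches $\mF_i/G$ with $\tilde\mN_i/H_i$, so $E_r \in \mF_i \iff r \in \jmath(\mN_i) = J_i$ — this is really the cleanest argument and makes the direct cubic-factorization analysis above only a confirmation; but carrying out the direct analysis is what produces the explicit square/trace characterization, which is the actual content.

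\textbf{Main obstacle.} The routine-looking but genuinely fiddly step is the reduction of $Y^3 - Y^2 + r^{-1}$ to Artin--Schreier form in characteristic $3$: one cannot complete the cube to kill the $Y^2$ term, so the right move (passing to the reciprocal polynomial first, which is legitimate because it only reorders roots) has to be spotted, and then one must track carefully that the scaling by $\sqrt r$ and the sign conventions do not affect the vanishing of the $\bF_q/\bF_3$-trace. The discriminant computation in characteristic $3$ (where the $-27 q^2$ term drops) is the other place where a sign or constant slip would derail the identification of $J_2$ with the non-squares.
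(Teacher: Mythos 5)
Your proposal is correct and follows essentially the same route as the paper: the paper likewise reduces to the cubic factor of $E_r$, passes (via $S=X/Y$, which is exactly your reciprocal-polynomial step) to the depressed cubic $S^3-rS+r$, invokes Hirschfeld's root-count result \cite[Corollary 1.23]{Hirschfeld2} for the square/trace trichotomy, and then uses the same orbit/cross-ratio bijection (Proposition \ref{Lambda_prop} together with Lemma \ref{Ji_def}) to identify the resulting sets with the $J_i$. The only cosmetic differences are that you re-derive the non-square case from the characteristic-$3$ discriminant criterion (in the paper this is Lemma \ref{cubicr}) instead of citing Corollary 1.23 for it, and your phrase ``Galois group is $S_3$'' should read ``Frobenius acts as a transposition, i.e.\ exactly one rational root,'' since over $\bF_q$ the Galois group is cyclic --- neither point affects correctness.
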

\begin{proof}
We note that the $\jmath$-invariant of the above quartic form $E_r=X(Y^3-Y^2X+r^{-1} X^3)$ is $r$. Also $E_r$ is in $\mF_i$ for $ i \in \{1,2,4\}$, according as the cubic form $Y^3-Y^2X+r^{-1}X^3$, or equivalently the cubic polynomial $h(S)=S^3-rS+r$ (where $S=X/Y$) has $i-1$ roots in $\bF_q$.
   By \cite[Corollary 1.23]{Hirschfeld2}, the number of roots of $h(S)$ in $\bF_q$ is 
  \begin{enumerate}
     \item $1$ if $r$ is a non-square in $\bF_q$. There are $(q-1)/2$ such vales of $r$, 
     \item $3$  if $r$ is a square in $\bF_q$ and Tr$_{\bF_q/\bF_3}(1/\sqrt{r})=0$. There are $q/3 -1$ such values of $\sqrt{r}$ and hence $(q-3)/6$ such values of $r$,
     \item $0$  if $r$ is a square in $\bF_q$ and Tr$_{\bF_q/\bF_3}(1/\sqrt{r})\neq 0$. There are $2q/3$ such values of $\sqrt{r}$, and hence $q/3$ such values of $r$.
 \end{enumerate}
 We note that the sizes of the sets in (1), (2), (3) above agree with $|J_4|, |J_2|, |J_1|$. We will show that the sets in (1),(2) and (3) actually equal $J_4$, $J_2$ and $ J_1$, respectively. As noted above, $\{G \cdot E_r : r \in \bF_q^\times\}$ is a complete set of orbits in $\mF_4 \cup \mF_2 \cup \mF_1$  which have non-zero $\jmath$-invariant. For $i \in \{1,2,4\}$, the map $G \cdot f \mapsto \Lambda_f$ carries the set of all such orbits in $\mF_i$ bijectively to $\mN_i/H_i$ by Proposition \ref{Lambda_prop}. The function $\jmath(\lambda)$ carries the latter set  for $\mN_i/H_i$  bijectively to $J_i$ by Lemma \ref{Ji_def}. Thus, $E_r \in \mF_i$ if and only if $r \in J_i$.
 % Since there are $(q-1)/2$ non-squares in $\bF_q$, we see that $|J_2|=(q-1)/2$. Suppose $-r=c^2$, for some $c\in \bF_q^\times$. Then $h(S)=0$ has three or zero roots in $\bF_q$ according as $Tr_{\bF_q/\bF_3}(r/c^3)=Tr_{\bF_q/\bF_3}(-1/c)=0 \text{ or } \neq 0$. The number of values of $c \in \bF_q^\times$ such that $Tr_{\bF_q/\bF_3}(-1/c)=0$ is $(q-3)/3$ and therefore $|J_4|=(q-3)/6$. Since $|J_4|+|J_2|+|J_1|=q-1$, we see that  $|J_1|=q/3$.

\end{proof}
\subsection*{Proof of Theorem \ref{result2}} \hfill \\
Theorem \ref{result2} asserts that there are a total of $(2q+2)$  $G$-orbits of quartic forms with nonzero discriminant, whose sizes and $\jmath$-invariant are as given in Table \ref{table:1}, which we reproduce below.\\

The last row of this table follows from Proposition \ref{j_zero_forms_prop} which asserts that there are $5$ orbits of forms with zero $\jmath$-invariant, with one each in $\mF_4, \mF_4', \mF_2, \mF_2', \mF_1$ of sizes $|G|/24, |G|/8, |G|/4, |G|/4, |G|/3$, respectively. \\

We now turn to the first three rows of the table. 
Proposition \ref{Lambda_prop} 
asserts that the number of orbits of forms with non-zero $\jmath$-invariant in $\mF_i$ for $i \in \{1, 2, 4\}$ is in bijection with $\mN_i/H_i$, which in turn is in bijection with $J_i$ by Lemma \ref{Ji_def}. It also asserts that the number of orbits of forms with non-zero $\jmath$-invariant in $\mF_4'$ and $\mF_2'$ is in bijection with $\mN_4/H_2$ and $\mN_2/H_2$. By Lemma \ref{Ji_def}, we see that  $\mN_2/H_2$ is in bijection with $J_2$, whereas there is a $3$-to-$1$ map from $\mN_4/H_2 \to J_4$. Thus,  for $i \in \{1, 2, 4\}$,  there are a total of $i$ orbits for each $j \in J_i$ . By proposition \ref{stabilizer_prop}, the size of an orbit  $\mF_i, \mF_i'$ with non-zero $\jmath$-invariant is $|G|/i$.
\begin{table}[h!] 
\begin{tabular}{c| *{6}{c}}
   & $|G|$ & $\tfrac{|G|}{2}$ & $\tfrac{|G|}{3}$ & $\tfrac{|G|}{4}$ & $\tfrac{|G|}{8}$    &  $\tfrac{|G|}{24}$  \\
&&&&&&\\  \hline \\
$\jmath(f)\in J_4$ & $0$ & $0$ & $0$ & $4$ & $0$ &  $0$  \\
$\jmath(f)\in J_2$ & $0$ & $2$ & $0$ & $0$ & $0$ &  $0$  \\
$\jmath(f)\in J_1$ & $1$ & $0$ & $0$ & $0$ & $0$ &  $0$  \\
$\jmath(f)=0$  & $0$ & $0$ & $1$ & $2$ &  $1$  &  $1$
  \\ [1ex]
\hline  \\ [1ex]
\end{tabular}
 \end{table}

\subsection{Representatives  of orbits of forms with nonzero discriminant}
We end this section by tabulating quartic forms representing each of the $(2q+2)$ orbits of forms with nonzero discriminant. In Table \ref{table-BQF}, we list representatives for each of the $(2q +2)$ orbits of binary quartic forms with non-zero discriminant, and the isomorphism class of the stabilizer of each orbit (obtained in Proposition \ref{stabilizer_prop}).

By Proposition \ref{j_zero_forms_prop}, the $5$ orbits with $\jmath$ invariant zero in $\mF_4, \mF_2, \mF_1, \mF_4'$ and $\mF_2'$ are represented by  by $XY(Y^2-X^2)$, $XY(Y^2-\ep X^2)$,
$X(Y^3-YX^2 -r_0 X^3)$ (where Tr$_{\bF_q/\bF_3}(r_0) \neq 0$),  $(Y^2-\epsilon X^2)(Y-\alpha X)(Y - \alpha^q X)$ (where  $\alpha \alpha^q=\epsilon$), and 
\[f(X,Y)=\begin{cases} 
(Y^4-\gamma X^4) &\text{ if $q \equiv 1 \mod 4$},\\
(Y^4-X^4 +X^3Y) &\text{ if $q \equiv 3 \mod 4$},\end{cases}
\]
respectively.

As observed in the discussion preceding, Lemma \ref{Lambda_prop},  
for $i \in \{1, 2, 4\}$, the complete set of $G$-orbits in $\mF_i$ with non-zero $\jmath$-invariant is $\{G \cdot E_r \colon r \in J_i\}$. Here 
\[  E_r(X,Y)=X(Y^3-Y^2X+r^{-1} X^3), \quad r \in \bF_q^\times. \]

We now turn to the representatives of the orbits in $\mF_4'$. We recall from the proof of Proposition \ref{Lambda_prop}, that if $(r_1, r_2, r_3,r_4)=(r_1, \phi(r_1), r_3 ,\phi(r_3))$ is an ordering of the roots of $f \in \mF_4'$, then $\lambda_f=N_{\bF_{q^2}/\bF_q}(\beta)$ where $\beta = \tfrac{r_3-r_1}{r_3-\phi(r_1)}$. For each $\{\lambda, \lambda^{-1}\} \in \mN_4/H_2$, we choose some  $\alpha=\alpha_{ \{\lambda, \lambda^{-1}\}} \in \bF_{q^2}$ such that 
\[ N_{\bF_{q^2}/\bF_q}(\tfrac{\alpha - \sqrt\ep}{\alpha+\sqrt\ep})=\lambda.\]
Then 
\[ \psi_\alpha = (Y^2- \ep X^2) ( Y - \alpha X)(Y - \phi(\alpha)X)  \in \mF_4',\]
 has roots $(r_1,\phi(r_1),r_3,\phi(r_3))=(\sqrt\ep,-\sqrt\ep, \alpha, \phi(\alpha))$. The quantity $\beta = \tfrac{r_3-r_1}{r_3-\phi(r_1)}$ equals $\tfrac{\alpha-\sqrt\ep}{\alpha+\sqrt\ep}$ and hence $\Lambda_{\psi_\alpha}=\tfrac{(\alpha-\sqrt\ep)(\alpha^q+\sqrt\ep)}{(\alpha+\sqrt\ep)(\alpha^q-\sqrt\ep)}$ is represented by $N_{\bF_{q^2}/\bF_q}(\beta)= \lambda$ as required.   \\

We now turn to the representatives of the orbits in $\mF_2'$. We recall from the proof of Proposition \ref{Lambda_prop}, that
if 
 $(r_1, r_2, r_3,r_4)=(r_1, \phi^2(r_1), \phi(r_1), \phi^3(r_1))$ is an ordering of the roots of $f \in \mF_2'$,   then $\lambda_f=\tfrac{\beta}{\phi(\beta)}$ where $\beta=N_{\bF_{q^4}/\bF_{q^2}}(\phi(r_1)-r_1)$. 

 If $q \equiv 1 \mod 4$, let $\bF_{q^4} = \bF_q[\theta]$ where $\theta^4=\gamma$ and $\gamma$ is a generator of the cyclic group $\bF_q^{\times}$. We note that $\phi(\theta) = \imath \theta$ where $\imath \in \bF_q$ is a square root of $-1$. The map  $g(t) = \tfrac{t +  \imath \theta^2}{t - \imath \theta^2}$  maps $\bF_q^\times$ bijectively to $\mN_2$. Therefore, it suffices to exhibit $f \in \mF_2'$ for which $\lambda_f$ is of the form $\tfrac{r +  \imath \theta^2}{r - \imath \theta^2}$.
 We claim $f = \upsilon_r$ defined below, has this property: 
\beq \label{eq:upsilonr1mod4}   \upsilon_r= \begin{cases}
 (Y^2- r  X^2)^2 - \gamma X^4 -   \sqrt{\gamma r}  X^3Y &\text{if $\ep r \in (\bF_q^\times)^2$,} \\
  (Y^2-\gamma r X^2)^2 - \gamma^3 X^4 -   \gamma^2 \sqrt{r}   X^3Y &\text{if $r \in (\bF_q^\times)^2$}.
\end{cases} \eeq

To see this, we consider $(r_1, \phi^2(r_1),\phi(r_1), \phi^3(r_1))$ with

\begin{enumerate}
\item $r_1=-\imath \theta-  t \theta^2$ where $t \in \bF_q^\times$.
Here  $\phi(r_1) - r_1=(\imath+1)\theta - t \theta^2$ and hence $\beta=N_{\bF_{q^4}/\bF_{q^2}}(\phi(r_1)-r_1)$ equals $\imath \theta^2+t^2 \gamma $. Therefore, $\lambda_f=\tfrac{\beta}{\phi(\beta)}$ equals $\tfrac{\gamma t^2 + \imath \theta^2}{\gamma t^2 - \imath \theta^2}$. 
The corresponding form in $\mF_2'$ is:
\[  \upsilon_{\ep t^2}=(Y^2-\gamma t^2 X^2)^2 - \gamma X^4 -   \gamma t  X^3Y.\]

\item   $r_1=t \theta^2+\theta^3$  where $t \in \bF_q^{\times}$.
Here,  we have $\phi(r_1) - r_1=-\theta^2( -t+ (\imath+1)\theta )$ and hence $\beta=N_{\bF_{q^4}/\bF_{q^2}}(\phi(r_1)-r_1)$ equals $ \gamma (t^2 + \imath \theta^2)$. Therefore, $\lambda_f=\tfrac{\beta}{\phi(\beta)}$ equals $\tfrac{ t^2 + \imath \theta^2}{ t^2 - \imath \theta^2}$. The corresponding  forms in $\mF_2'$ is:
\[ \upsilon_{t^2}= (Y^2-\gamma t^2 X^2)^2 - \gamma^3 X^4 -  \gamma^2 t  X^3Y.\]
\end{enumerate}

The map
 \[g: \bF_q^\times   \to \mN_2 , \qquad g(r)=\tfrac{r +  \imath \theta^2}{r - \imath \theta^2} \]
is bijective, and moreover maps the classes  $\{\pm r\}$ bijectively to the classes $\{\lambda, \lambda^{-1}\}$. We pick some  $r_{\{\lambda, \lambda^{-1}\}} \in \bF_q^\times$ satisfying the property that 
$g(r_{\{\lambda, \lambda^{-1}\}}) \in \{\lambda, \lambda^{-1}\}$. For example,  
\[ r_{\{\lambda, \lambda^{-1}\}} =\tfrac{\imath \theta^2 (\lambda+1)}{(\lambda-1)}. \]
We will consider the forms  
$\upsilon_r$   for these $(q-1)/2$ values of $r$, where  $\upsilon_r$ 
is the form given in \eqref{eq:upsilonr1mod4}.

%{\bf \noindent Case of $\mF_2'$ for $q \equiv 3 \mod 4$} \hfill \\
If $q \equiv 3$ mod $4$,  let $\bF_{q^2}=\bF_q[\imath]$ where $\imath^2=-1$ and let $\bF_{q^4} = \bF_{q^2}[\theta]$ where $\theta^2=\imath-1$. Since  $N_{\bF_{q^2}/\bF_q}(\imath-1)=-1$ is a non-square in $\bF_q$, it follows that $\imath-1$ is a non-square in $\bF_{q^2}$. We note that $\phi(\theta) = \imath/\theta$. 
 The map  $g(r) =  \tfrac{  1+r  -  \imath}{  1 +r +  \imath }$  maps $\bF_q\setminus\{-1\}$ bijectively to $\mN_2$. Therefore, it suffices to exhibit $f \in \mF_2'$ for which $\lambda_f$ is of the form
$\tfrac{  1 +r-  \imath}{   1 +r + \imath }$ for each $r \in \bF_q\setminus\{-1\}$.
We claim $f = \upsilon_r$ defined below, has this property: 
\beq \label{eq:upsilonr3mod4}  \upsilon_r=\begin{cases}
 (Y^2- r  X^2)^2 -  X^4  +  X^2(  (Y^2+ r  X^2)     -XY \sqrt{-r})   &\!\!\text{if $r \in \{-x^2: x \in \bF_q\setminus\{\pm 1\}\}$}, \\
  (Y^2+ r X^2)^2 -X^4 -   X^2(  (Y^2- r  X^2)   + XY \sqrt{r})  &\!\!\!\!\text{if $r \in (\bF_q^\times)^2$.}
\end{cases} \eeq
 
To see this, we consider $(r_1, \phi^2(r_1),\phi(r_1), \phi^3(r_1))$ with
\begin{enumerate}
\item $r_1=\imath(\theta+  t)$ where $t \in \bF_q\setminus\{\pm 1\}$.  \\Here  $\phi(r_1) - r_1= ( \imath t - \imath \theta +1/\theta)$ and hence $\beta=N_{\bF_{q^4}/\bF_{q^2}}(\phi(r_1)-r_1)$ equals $- t^2+ 1  -  \imath$.  Therefore, $\lambda_f=\tfrac{\beta}{\phi(\beta)}$ equals
$\tfrac{ -t^2 +1  -  \imath}{ -t^2 +1  +  \imath }$. The corresponding form in $\mF_2'$ is:
\[  \upsilon_{- t^2}= (Y^2+ t^2  X^2)^2 -  X^4  +X^2(  (Y^2- t^2  X^2)   - tXY  ). \]

\item $r_1=(-\imath t -\imath/ \theta)$ for $ t \in  \bF_q^{\times}$. Here  $\phi(r_1) - r_1= (- \imath t + \imath/ \theta + \theta)$ and hence $\beta=N_{\bF_{q^4}/\bF_{q^2}}(\phi(r_1)-r_1)$ equals $- t^2-   1 +  \imath$.  Therefore, $\lambda_f=\tfrac{\beta}{\phi(\beta)}$ equals
$\tfrac{ t^2 +1 -  \imath}{  t^2 +1 +  \imath }$. The corresponding form in $\mF_2'$ is:
\[  \upsilon_{t^2}= (Y^2+ t^2  X^2)^2 -  X^4  - 2 X^2(  (Y^2- t^2  X^2) \theta_0 - 2t XY  \theta_1).   \]

\end{enumerate}

The map 
\[ g: \bF_q\setminus \{-1\}   \to \mN_2 , \qquad g(r)=\tfrac{  1+r  -  \imath}{  1 +r +  \imath }\]
is bijective, and moreover maps the classes  $\{r, 1-r\}$ bijectively to the classes $\{\lambda, \lambda^{-1}\}$. 
We again pick  $r_{\{\lambda, \lambda^{-1}\}} \in \bF_q$ which satisfies the property that 
$g(r_{\{\lambda, \lambda^{-1}\}}) \in \{\lambda, \lambda^{-1}\}$, for example,
\[ r_{\{\lambda, \lambda^{-1}\}} =-1+\tfrac{\imath  (\lambda+1)}{(1-\lambda)}. \]  
We will consider the forms  
$\upsilon_r$   for these $(q-1)/2$ values of $r$, where  $\upsilon_r$ 
is the form given in \eqref{eq:upsilonr3mod4}.

\begin{table}[h]
\begin{tabular}{||c|c|c|c||}
\hline
Representatives of $\fO$ & $\jmath(\fO)$ & Type & Stabilizer \\
&&&\\  
\hline  
&&&\\  
$E_r, \;  r \in J_4$ \; ($|J_4|$ orbits)& $r \in J_4$ & $\mF_4$&  $\bZ/2\bZ \times \bZ/2\bZ$ \\
&&&\\  
$E_r, \;  r \in J_2$  \; ($|J_2|$ orbits)& $r\in J_2$ & $\mF_2$&  $\bZ/2\bZ$ \\
&&&\\  
$E_r, \;  r \in J_1$  \; ($|J_1|$ orbits)& $r\in J_1$ &$ \mF_1$&  trivial \\
&&&\\
$\psi_{\alpha_{\{\lambda, \lambda^{-1}\}}}$, $\{\lambda, \lambda^{-1}\}\in \mN_4/H_2$  ($3|J_4|$ orbits) & $\jmath(\lambda) \in J_4$ & $\mF_4'$&  $\bZ/2\bZ \times \bZ/2\bZ$ \\
&&&\\
$\upsilon_{r_{\{\lambda, \lambda^{-1}\}}}, \quad \{\lambda, \lambda^{-1}\} \in \mN_2/H_2$
& $ \jmath(\lambda) \in J_2$& $\mF_2'$ &$\bZ/2\bZ$\\
$q \equiv 1 \mod 4$ these are $|J_2|$ orbits 
&&&\\ 
&&&\\
$\upsilon_{r_{\{\lambda, \lambda^{-1}\}}}, \quad \{\lambda, \lambda^{-1}\} \in \mN_2/H_2$  & $ \jmath(\lambda)\in J_2$& $\mF_2'$ &$\bZ/2\bZ$\\
$q \equiv 3 \mod 4$ these are $|J_2|$ orbits
&&&\\
\hline
&&&\\
$XY(Y^2-X^2)$ & $0$ & $\mF_4$ & $S_4$ \\
&&&\\
$XY(Y^2-\ep X^2)$ & $0$ & $\mF_2$ & $\bZ/2\bZ \times \bZ/2\bZ$\\
&&&\\
$X(Y^3-YX^2-r_0X^3)$, $Tr_{\bF_q/\bF_3}(r_0)\neq 0$ & $0$ & $\mF_1$ & $\bZ/3\bZ$\\
&&&\\
$(Y^2-\ep X^2)(Y-\alpha X)(Y-\alpha^q X)$, $\alpha \alpha^q=\ep$ & $0$ & $\mF_4'$ & $D_4$\\
&&&\\
$\begin{cases} 
(Y^4-\gamma X^4) &\text{ if $q \equiv 1 \mod 4$}\\
(Y^4-X^4 +X^3Y) &\text{ if $q \equiv 3 \mod 4$}\end{cases}$ & $0$ & $\mF_2'$  & $\bZ/4\bZ$ \\
\hline 
\end{tabular}
\[\]
\caption{Representatives and stabilizers of $G$-orbits of binary quartic forms with discriminant nonzero}
 \label{table-BQF}
 \end{table}

\section{$G$-orbits of lines of $PG(3,q)$} \label{four}
For notational simplicity, we denote the projective space $\bP(D_3V)$ as just $PG(3,q)$. The lines of $PG(3,q)$ decompose into $8$ classes, denoted $\mO_1, \dots, \mO_8$ in   Hirschfeld's book  \cite[Lemma 21.1.4]{Hirschfeld3}. The class $\mO_6$ consists of the generic lines. The classes of non-generic lines have been completely decomposed into $G$-orbits in literature. We follow the notation of Blokhuis-Pellikaan-Sz\H{o}nyi, in \cite[Theorem 8.1]{BPS}: the classes except $\mO_5$ and $\mO_8$ form a single orbit where as  $\mO_5$ and $\mO_8$ decompose into orbits $\mO_5^+ \cup \mO_5^-$
and $\mO_{8.1}^+\cup \mO_{8.2}^- \cup \mO_{8.2}$ respectively. In all the non-generic lines decompose into ten  $G$-orbits.
% in the book Note that, the notations we are using here for the classes of lines of $PG(3,q)$ follows that of , while the notation for the orbits of non-generic lines aligns with that used in  
\subsection{$G$-orbits of non-generic lines of $PG(3,q)$}
We recall that a line is non-generic if it meets $C \cup \mA$. A line $L$ intersects $\mA$ if and only if $z_2(f_L)=0$.

%\subsubsection{Lines intersecting $\mC$ and $\mA$}
\subsubsection{Lines intersecting $\mA$}
We have seen in Proposition \ref{correspondence} that   $L \neq \mA$ intersects $\mA$, if and only if $f_L$ has a linear factor over $\bF_q$ of multiplicity at least $3$. Thus $f_L$ is in the $G$-orbit of $X^4$ or $X^3Y$.  Since the map $\bP((\wedge^2 D_3V)/\hat A) \to \text{Sym}^4(V^*)$ is $G$-equivariant, we see that $L$ is  the $G$-orbit of either i) $(z_0, \dots, z_5)=(1,0,0,0,0,z_5)$ for some $z_5 \in \bF_q$, or ii) $(z_0, \dots, z_5)=(0,1,0,0,0,z_5)$ for some $z_5 \in \bF_q$. It is easy to see from the matrices \eqref{eq:g_4}, that these form $5$ orbits: 
 \begin{enumerate}
\item  $\mO_2=G \cdot (1,0,0,0,0,0)$ which consists of the $(q+1)$ tangents of $C$. 
\item  $\mO_4=G \cdot (0,1,0,0,0,0)$ which consists of the $q(q+1)$ non-tangent unisecants contained in some osculating planes of $C$.
\item The remaining $3$ orbits form the  class $\mO_8$ of external lines contained in some osculating plane of $C$:
\begin{enumerate}
\item  $\mO_{8.1}^+=G \cdot (1,0,0,0,0,1)$ of size $(q^2-1)/2$. 
\item   $\mO_{8.1}^-=G \cdot (1,0,0,0,0,\ep)$ of size $(q^2-1)/2$.
\item  $\mO_{8.2}=G \cdot (0,1,0,0,0,1)$ of size $(q^3-q)$. 
\end{enumerate}
\end{enumerate}
% \item  $f_L=\varphi_1^3 \varphi_2$ for independent linear forms $\varphi_1, \varphi_2$.
% There $q$ lines $L$ (corresponding to the $q$ values of $z_5$) having the same $f_L$. This accounts for $(q+1)q^2$ lines. 
% \item If $z_5=0$ we get the $(q^2+q)$ non-tangent unisecants in osculating planes $\mO_4$. 
% \item If $z_5 \neq 0$ we get the $(q^3-q)$ lines $\mO_{8.2}$.
% \end{enumerate}

\subsubsection{Lines meeting $C$ but not $\mA$}
There are $q^3+q^2-q$ such lines. As observed in Proposition \ref{correspondence},  the map $L \leftrightarrow f_L$ expressed in coordinates as 
\[ (z_0, \dots, z_4, (z_1z_3-z_0z_4)/z_2) \mapsto z_0X^4+z_1X^3Y+z_2X^2Y^2+z_3XY^3+z_4Y^4,\] induces a  bijective correspondence between $G$-orbits of lines meeting $C$ but not $\mA$, and the $G$-orbits of quartic forms having a linear form of multiplicity exactly $2$.   From Lemma \ref{disc_zero_forms},
there are $4$ such orbits $G \cdot (X^2Y^2)$, $G \cdot X^2Y(Y-X)$, $G \cdot X^2(Y^2-\ep X^2)$ and $G \cdot (Y^2-\ep X^2)^2$ of sizes $q(q+1)/2$, $|G|/2$,  $|G|/2$, and $q(q-1)/2$ respectively. Therefore we get the following $4$ orbits of such  lines:
\begin{enumerate}
\item $\mO_1=G\cdot (0,0,1,0,0,0)$ of size $q(q+1)/2$ consisting of the real chords of $C$, and corresponding to $G \cdot (X^2Y^2)$.

% of size $q(q+1)/2$. Using (\ref{new-coordinate}), we see that, the Pl\"ucker coordinates of a line in $PG(3,q)$ corresponding to the point $(0,0,1,0,0,0)$ of the Klein quadric are $(0,0,1,0,0,0)$ and it is the joining of the two points $(1,0,0,0),(0,0,0,1)$, which is a real chord. The real chords of $C$ forms a single $G$-orbit of size  denoted as $\mO_1$. Therefore, the orbit $G \cdot (X^2Y^2)$ corresponds to the orbit $\mO_1$ consisting of the  of real chords of $C$.
\item  $\mO_3=G \cdot (\ep^2,0,\ep,0,1,-\ep)$ of size $q(q-1)/2$ consisting of the imaginary chords of $C$, and corresponding to $G \cdot (Y^2-\ep X^2)^2$.
% , of size $q(q-1)/2$. Using (\ref{new-coordinate}), we see that, the Pl\"ucker coordinates of a line in $PG(3,q)$ corresponding to the point $(\ep^2,0,\ep,0,1,-\ep)$ of the Klein quadric are $(1,0,\ep,-\ep,0,\ep^2)$ and it is the joining of the two points $(1,0,\ep,0),(0,1,0,\ep)$, which is a real chord. The imaginary chords of $C$ forms a single $G$-orbit of size $q(q-1)/2$ denoted as $\mO_3$. Therefore, the orbit $G \cdot (Y^2-\ep X^2)^2$ corresponds to the orbit $\mO_3$ consisting of the  of imaginary chords of $C$.

\item The remaining $2$ orbits form the class $\mO_5$ of non-tangent unisecants not contained in any osculating plane:
\begin{enumerate}
\item  $\mO_5^+=G \cdot (0,0,-1,1,0,0)$ of size $\tfrac{|G|}{2}$  corresponding to $G \cdot Y^2X(Y-X)$.
% of size $|G|/2$.  Using (\ref{new-coordinate}), we see that, the Pl\"ucker coordinates of a line in $PG(3,q)$ corresponding to the point $(0,0,-1,1,0,0)$ of the Klein quadric are $(0,0,-1,1,0,0)$ and it is the joining of the two points $(1,0,0,0),(0,0,1,-1)$, which is a non-tangent unisecant not meeting the axis. This line form a single $G$-orbit of size $|G|/2$ denoted as $\mO_5^+$. Therefore, the orbit $G \cdot Y^2X(Y-X)$ corresponds to the orbit $\mO_5^+$.
\item $\mO_5^-=G \cdot (0,0,-\ep,0,1,0)$ of size  $\tfrac{|G|}{2}$ corresponding to $G \cdot Y^2(Y^2-\ep X^2)$.
% Using (\ref{new-coordinate}), we see that, the Pl\"ucker coordinates of a line in $PG(3,q)$ corresponding to the point $(0,0,-1,1,0,0)$ of the Klein quadric are $(0,0,-\ep,0,1,0)$ and it is the joining of the two points $(1,0,0,0),(0,1,0,-\ep)$, which is a non-tangent unisecant not meeting the axis. This line form a single $G$-orbit of size $|G|/2$ denoted as $\mO_5^-$. Therefore, the orbit $G \cdot Y^2(Y^2-\ep X^2)$ corresponds to the orbit $\mO_5^-$.
\end{enumerate}
\end{enumerate}

%\subsection{Lines with $z_2(f_L) \neq 0$}
%The number of such lines is $q^4$. Here there is a bijective correspondence $L \leftrightarrow f_L$

\subsection{$G$-orbits of generic lines} 
\subsubsection{Proof of Theorem \ref{thm_main}}
\bep By Proposition \ref{correspondence}, the map $L \mapsto f_L$ gives a bijective correspondence between $G$-orbits of generic lines of $PG(3,q)$ and $G$-orbits of quartic forms with  nonzero $\jmath$-invariant.  Thus, from Table \ref{table:1}, for each $i \in \{1, 2, 4\}$, we have $i |J_i|$ orbits of size $|G|/i$. \eep

% There are $(q-1)|G|$ such lines (corresponds to all the generic lines/external lines not meeting the axis class $\mO_6$), corresponding to the orbits in \S \ref{j_nonzero_forms}. Here there is a bijective correspondence $L \leftrightarrow f_L$.

\subsubsection{Generators of the line orbits}
Recall from Section \ref{two} that, if $L$ be a generic line and $f_L$ be the binary quartic form associated with it, then the Pl\"ucker coordinates of $L$ can be given by
\beq \label{change-coordinate}
(p_{01},p_{02},p_{03},p_{12},p_{13},p_{23})=(z_4,z_3,z_2,(z_1z_3-z_0z_4)/z_2, z_1, z_0).
\eeq
Let $\mO$ be the orbit of a generic line $L$ corresponding to the orbit $\fO$ of the associated binary quartic form $f_L$. Let $\jmath(\mO)=\jmath(\fO)$, where $\jmath(\fO)=\jmath(f_L)$.

In Table \ref{table-Lines} below, we list the representatives for all the $(2q-3)$ $G$-orbits of generic lines in $PG(3,q)$. The first column lists the representatives for the orbit $\mO$ of generic lines, the second column gives the value $\jmath(\mO)$, the third column specifies the corresponding type of binary quartic forms, and the fourth column indicates the isomorphism class of the stabilizer of the orbit $\mO$. The usual way to represent lines of $PG(3,q)$ is to give a pair of generators in $PG(3,q)$ for the line. We here represent these generators by the rows of a $2\times 4$ matrix, and called it as a generator matrix for that line.

We begin with the orbits of generic lines for which the associated orbits of binary quartic forms is in $\mF_4 \cup \mF_2 \cup \mF_1$. Let $E_r(X,Y)=X(Y^3-Y^2X+r^{-1} X^3)$, where $\jmath(E_r)=r\in \bF_q^\times$. By Lemma \ref{Ji_lem}, we know that $E_r\in \mF_i$ according as $r\in J_i$, where $i\in \{1,2,4\}$. By (\ref{change-coordinate}), the Pl\"ucker coordinates of the line $L$ associated with $E_r$ is $(0,1,-1,0,0,r^{-1})$. Therefore, a generator matrix for $L$ can be given by
\[M(E_r)=\bbm 1 & 0 & 0 & -r^{-1} \\ 0 & 0 & 1 & -1 \bem.\]

We recall from Table \ref{table-BQF}, the representative $\psi_{\alpha_{\{\lambda, \lambda^{-1}\}}}$ for $\lambda \in \mN_4$, for the orbits in $\mF_4'$. Let $\alpha_{\{\lambda, \lambda^{-1}\}}=x+y\sqrt{\ep}$, $x,y\in \bF_q$, then by (\ref{change-coordinate}), the Pl\"ucker coordinates of the line associated with $\psi_{\alpha_{\{\lambda, \lambda^{-1}\}}}$ is $(1,x,(x^2-\ep y^2)-\ep,\tfrac{-\ep^2y^2}{(x^2-\ep y^2)-\ep},-\ep x,\ep(\ep y^2-x^2))$. A generator matrix for the corresponding orbits of lines  are:
\[M(\psi_{\alpha_{\{\lambda, \lambda^{-1}\}}})=\bbm 1 & 0 & \tfrac{\ep^2y^2}{(x^2-\ep y^2)-\ep} & \ep x \\ 0 & 1 & x & (x^2-\ep y^2)-\ep \bem.\]

Again from Table \ref{table-BQF}, we have the representatives $\upsilon_{r_{\{\lambda, \lambda^{-1}\}}}$ for  $\lambda \in \mN_2$, for the orbits in $\mF_2'$.  A Generator matrix for the corresponding each of the orbits of lines are as below.  The cases $q \equiv 1 \mod 4$ and $q \equiv 3 \mod 4$ are treated separately.
The notation $M(\upsilon_{r_{\{\lambda, \lambda^{-1}\}}}, 1)$ and  $M(\upsilon_{r_{\{\lambda, \lambda^{-1}\}}}, 3)$ refer to these two cases.
\begin{equation*}
  M(\upsilon_{r_{\{\lambda, \lambda^{-1}\}}}, 1)  = \begin{cases} 
  \bbsm 1& 0& \tfrac{r^2-\gamma}{r} & \sqrt{\gamma r}\\
 0&1& 0&  r \besm  &\text{if $\ep r\in (\bF_q^\times)^2$}, \\
  \bbsm 1& 0& \tfrac{\gamma(r^2-\gamma)}{r} &  \gamma^2 \sqrt{r}\\
 0&1& 0&   \gamma r \besm &\text{if $ r\in (\bF_q^\times)^2$},\end{cases}
 \end{equation*}

\begin{equation*}
  M(\upsilon_{r_{\{\lambda, \lambda^{-1}\}}}, 3)  = \begin{cases} 
  \bbsm 1& 0& \tfrac{r^2+r-1}{r+1} & \sqrt{- r}\\
 0&1& 0&  r+1 \besm  &\text{if $ r \in \{-x^2: x \in \bF_q\setminus\{\pm 1\}\}$}, \\
  \bbsm 1& 0& -\tfrac{r^2+r-1}{r+1} & \sqrt{r}\\
 0&1& 0&   -(r+1) \besm &\text{if $ r\in (\bF_q^\times)^2$}. \end{cases} 
 \end{equation*}

\begin{table}[h]
\begin{tabular}{||c|c|c|c||}
\hline
Generators of $\mO$ & $\jmath(\mO)$ & Type & Stabilizer \\
&&&\\  
\hline  
&&&\\  
$M(E_r), \;  r \in J_4$ \; ($|J_4|$ orbits)& $r \in J_4$ & $\mF_4$&  $\bZ/2\bZ \times \bZ/2\bZ$ \\
&&&\\  
$M(E_r), \;  r \in J_2$  \; ($|J_2|$ orbits)& $r\in J_2$ & $\mF_2$&  $\bZ/2\bZ$ \\
&&&\\  
$M(E_r), \;  r \in J_1$  \; ($|J_1|$ orbits)& $r\in J_1$ &$ \mF_1$&  trivial \\
&&&\\
$M(\psi_{\alpha_{\{\lambda, \lambda^{-1}\}}}), \;  \lambda \in \mN_4$ \; ($3|J_4|$ orbits) & $\jmath(\lambda) \in J_4$ & $\mF_4'$&  $\bZ/2\bZ \times \bZ/2\bZ$ \\
&&&\\
$M(\upsilon_{r_{\{\lambda, \lambda^{-1}\}}}, 1) \;  \lambda \in \mN_2$ \; ($|J_2|$ orbits)& $\jmath(\lambda) \in J_2$ & $\mF_2'$&  $\bZ/2\bZ$ \\
Here $q \equiv 1 \mod 4$
&&&\\
&&&\\
$M(\upsilon_{r_{\{\lambda, \lambda^{-1}\}}}, 3) \;  \lambda \in \mN_2$ \; ($|J_2|$ orbits)& $\jmath(\lambda) \in J_2$ & $\mF_2'$&  $\bZ/2\bZ$ \\
Here $q \equiv 3 \mod 4$
&&&\\  
\hline 
\end{tabular}
\[\]
\caption{Generators  and stabilizers of $G$-orbits of generic lines of $PG(3,q)$}
 \label{table-Lines}
 \end{table}

\section{Elliptic curve associated to quartic forms  with non-zero $\jmath$-invariant} \label{five}
Let $f \in \bP(\text{Sym}^4(V^*))$ be a quartic form with $\jmath(f) \neq 0$. We  recall that $\jmath(f) \neq 0$ if and only if the coefficient $z_2(f)$ of $X^2Y^2$ in $f$ is nonzero.  We consider the quartic form (which we again denote as $f$) in Sym$^4(V^*)$  representing $f$, and   having $1$ as the coefficient of $X^2Y^2$.
Let 
\beq \label{eq:zetaf} \zeta_f=\#\{(x,y) \in PG(1,q) \colon f(x,y) \text{ is a non-zero square in $\bF_q$} \}.\eeq
In the next theorem, we  express $\zeta_f$ in terms of the number $\#\mE_r(\bF_q)$ of $\bF_q$-rational points of the elliptic curve in $\bP^2$ defined by:
\beq \label{eq:mEr} \mE_r: S^2=T^3+T^2 -r^{-1}, \quad  r=\jmath(f).\eeq
The $\jmath$-invariant of the elliptic curve $\mE_r$ is also $r$, as mentioned in Section \S1. If $f$ has at least one linear factor over $\bF_q$, that is, $f \in \mF_1 \cup \mF_2 \cup \mF_4$, then the result below is straightforward. However, for $f \in \mF_2' \cup \mF_4'$ the relation between $\zeta_f$ and $\#\mE_r(\bF_q)$ is not so obvious. 
\begin{thm} \label{thm_elliptic} Let $f$ be a quartic form with $\jmath(f)=r \neq 0$. The quantity $\zeta_f$ defined above is related to the number $\#\mE_r(\bF_q)$ of $\bF_q$-rational  points of the elliptic curve $\mE_r$ defined above by
   \[ \zeta_f = \begin{cases}
       (\#\mE_r(\bF_q)-i)/2 & \text{ if $f\in \mF_i$ for $i \in \{1, 2, 4\}$} \\
       \#\mE_r(\bF_q)/2 & \text{ if $f\in \mF_2' \cup \mF_4'$.}
   \end{cases}\] 
\end{thm}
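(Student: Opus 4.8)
The plan is to reduce the computation of $\zeta_f$ to a point count on $\mE_r$ via an explicit change of variable that brings $f$ into a normal form. Write $f(X,Y)=z_0X^4+z_1X^3Y+X^2Y^2+z_3XY^3+z_4Y^4$ with the coefficient of $X^2Y^2$ normalized to $1$ (possible since $\jmath(f)\neq0$). The first step is to recall the classical fact that $z_2(f)^6/\Delta(f)=\jmath(f)=r$, so that $\Delta(f)=1/r$, and to note the identity
\[
f(X,Y)\,X^2 \;=\; \bigl(X^2Y + \tfrac{z_1}{2}X^3 + \tfrac{z_3}{2}XY^2\bigr)^2 \;-\; g(X,Y)
\]
for a suitable sextic $g$; more to the point, in characteristic $3$ one completes the square in $Y^2$ directly: $f = (Y^2 + aXY + bX^2)^2 - (\text{lower order in the resolvent})$. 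I would instead follow the route that makes $\mE_r$ appear: set $u = f(X,Y)/X^4$ as a rational function of $t=Y/X$ on the affine line, and observe that $u = P(t)$ is a quartic polynomial in $t$ whose associated hyperelliptic (in fact, after the standard quartic-to-cubic transformation) curve $v^2=P(t)$ is birational over $\bF_q$ to the elliptic curve $\mE_r: S^2=T^3+T^2-r^{-1}$. The key computation is to exhibit this birational map explicitly using a rational point of $v^2=P(t)$ coming from a root of $f$ (when $f\in\mF_1\cup\mF_2\cup\mF_4$) or, when $f\in\mF_2'\cup\mF_4'$, from the fact that $P$ has no $\bF_q$-root but the Jacobian of $v^2=P(t)$ is still $\mE_r$ and the curve $v^2=P(t)$ is a torsor under it.

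The second and central step is the point count itself. On the curve $\mathcal C_f : v^2 = P(t)$ (completed appropriately at $t=\infty$, i.e. the smooth projective model of degree-$4$ type), the number of affine $\bF_q$-points with $v\neq 0$ is exactly $2\zeta_f$, since each $t=Y/X\in\bF_q$ with $f$ nonzero square contributes two values of $v$, and one must also account correctly for the point(s) at infinity and the Weierstrass points $v=0$ (the roots of $f$ in $\bF_q$). So $\#\mathcal C_f(\bF_q) = 2\zeta_f + (\text{number of }\bF_q\text{-roots of }f) + (\text{points at infinity})$. For $f\in\mF_i$, $i\in\{1,2,4\}$, the number of $\bF_q$-roots is $i-1$, and the degree-$4$ model has $2$ points at infinity when the leading coefficient $z_0$ is a square and $0$ when it is a nonsquare — but the whole sum $\#\mathcal C_f(\bF_q)$ equals $\#\mE_r(\bF_q)$ because $\mathcal C_f$ is birational to $\mE_r$ over $\bF_q$; combining these bookkeeping terms gives $2\zeta_f = \#\mE_r(\bF_q) - i$. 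For $f\in\mF_2'\cup\mF_4'$, $f$ has no $\bF_q$-root so the Weierstrass contribution is $0$ and the point-at-infinity contribution on the degree-$4$ model is $0$ as well (leading coefficient behavior / the two branches at infinity are Galois-conjugate over $\bF_{q^2}$), while $\#\mathcal C_f(\bF_q)$ as a torsor under $\mE_r$ still equals $\#\mE_r(\bF_q)$; hence $2\zeta_f = \#\mE_r(\bF_q)$.

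The main obstacle I expect is the third step: justifying cleanly that $\mathcal C_f(\bF_q)$ and $\mE_r(\bF_q)$ have the same cardinality in the cases $f\in\mF_2'\cup\mF_4'$, where $\mathcal C_f$ is a nontrivial torsor under its Jacobian $\mE_r$ and need not be $\bF_q$-isomorphic to $\mE_r$. The resolution is that a genus-$1$ curve over a finite field always has an $\bF_q$-point (Lang's theorem / the trivial case of the Weil conjectures), so $\mathcal C_f$ is in fact isomorphic to its Jacobian over $\bF_q$, and in particular $\#\mathcal C_f(\bF_q)=\#\mE_r(\bF_q)$. I would therefore structure the argument so that the only genuinely curve-theoretic input is: (i) the birational identification of the Jacobian of $v^2=P(t)$ with $\mE_r$ — which I would verify by a direct invariant-theoretic computation, matching the $j$-invariant (both equal $r$) and checking the curves are quadratic twists with the same $\Delta$, hence isomorphic — and (ii) the count of $\bF_q$-points at infinity and at the Weierstrass locus of the degree-$4$ model, which is an elementary case analysis on the leading coefficient $z_0$ and on the factorization type of $f$ listed in Definition~\ref{Fi_def}. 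The rest is the bijection $t\mapsto(t,\pm v)$ translating $\zeta_f$ into the point count, which is routine once the setup is in place. An alternative to (i), avoiding twists, is to produce the birational map explicitly over $\bF_q$ for each type using a distinguished rational object on $\mathcal C_f$ (a root for $\mF_1,\mF_2,\mF_4$; a rational point guaranteed by Lang for $\mF_2',\mF_4'$), and I would present whichever is shorter.
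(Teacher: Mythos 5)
Your plan follows the same overall route as the paper: interpret $2\zeta_f$ as the number of $\bF_q$-points of the genus-one curve obtained from $w^2=f$, treat $\mF_1,\mF_2,\mF_4$ by a direct count, and for $\mF_2'\cup\mF_4'$ use the Hasse/Lang bound to produce a rational point and then identify the curve with $\mE_r$ over $\bF_q$. The genuine gap is in how you propose to carry out that identification, which is the crux of the theorem. Your primary justification --- that the Jacobian has $\jmath$-invariant $r$ and that ``quadratic twists with the same $\Delta$ are isomorphic'' --- does not work: equal $\jmath$-invariants determine an elliptic curve only up to twist, the quadratic twist by $-1$ has the same discriminant yet is in general not $\bF_q$-isomorphic to the original curve, and non-isomorphic quadratic twists have different point counts, which is exactly the quantity the theorem computes; so ``correct up to twist'' is fatal here. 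Moreover, the assertion that the Jacobian of $v^2=P(t)$ has $\jmath$-invariant $r$ is itself what must be proved, and the classical quartic-resolvent/invariant formulas you would quote are not available off the shelf in characteristic $3$ (note the paper's $\Delta$ and $\jmath$ are characteristic-$3$ expressions). The alternative you mention in passing --- constructing the $\bF_q$-isomorphism explicitly from a rational point --- is precisely what the paper does: after using $\zeta_{g\cdot f}=\zeta_f$ and the Hasse bound to normalize $f$ so that $f(0,1)$ is a nonzero square (and then $z_4=1$, $z_3=0$), it writes down mutually inverse maps $\psi,\psi'$ between the smooth model of $X^2W^2=f(X,Y)$ and $\mE_r$, following Mordell. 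That explicit computation is the substantive content of the proof, and in your proposal it is deferred rather than done, so the key step is not established.

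Two smaller points of bookkeeping. First, $\zeta_f$ counts points of $PG(1,q)$, including $(0,1)$, i.e.\ $t=\infty$; its contribution is governed by whether the leading coefficient $f(0,1)=z_4$ of $P(t)$ is a nonzero square, which is also exactly the condition for the two points at infinity of the degree-four model to be $\bF_q$-rational. In the $\mF_2'\cup\mF_4'$ case you assert both that the affine points with $v\neq 0$ number $2\zeta_f$ and that the contribution at infinity is zero; both assertions fail when $z_4$ is a square (the two errors cancel, but the accounting as written is incorrect --- the paper handles this by noting the pair of points over the singular point is rational iff $z_4$ is a square). Second, in the $\mF_i$ case your count ``$i-1$ rational roots'' presupposes a normalization in which $\infty$ is a root of $f$; the clean way to legitimize passing to such a representative is the paper's first step, the observation that $\zeta_f$ depends only on the $G$-orbit of $f$, after which one may simply take $f$ to be $E_r=X(Y^3-Y^2X+r^{-1}X^3)$, for which $\mE_r$ appears directly and no quartic-to-cubic transformation is needed. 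You should state and prove that invariance explicitly before normalizing.
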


\bep
For $g =  \bbsm a & b \\ c &d \besm \in GL_2(\bF_q)$, we note that $ g \cdot f = \det(g)^{-4} f(dx-by,ay-cx)$, and the coefficient $z_2(g \cdot f)$ of $X^2Y^2$ in $g \cdot f$ is 
$\det(g)^{-2} z_2(f)$. Therefore, 
\[\zeta_{g \cdot f}=\#\{(x,y) \in PG(1,q) \colon (ad-bc)^{-2} f(dx-by,ay-cx) \text{ is a non-zero square in $\bF_q$} \},\]
equals $\zeta_f$. This shows that $\zeta_f$ only depends on the orbit $G \cdot f$. In case $f \in \mF_1 \cup \mF_2 \cup \mF_4$, we may take $f= X(-Y^3+Y^2X-r^{-1} X^3))$. Let $T=-Y/X$. If $r \in J_i$, then $T^3+T^2-r^{-1}$ has $(i-1)$ roots in $\bF_q$. Therefore, $\# \mE_r(\bF_q)$ equals  $1+ (i-1) + 2 \zeta_f$, where the contribution $1$ comes from the point at infinity of $\mE_r(\bF_q)$. Therefore, $\zeta_f = (\#\mE_r(\bF_q)-i)/2$.\\

We turn to the case when  $f \in \mF_2' \cup \mF_4'$, i.e. the case when $f$
has no linear factor over $\bF_q$.  In particular,  $f(x,y) \neq 0$ for all $(x,y) \neq (0,0) \in \bF_q \times \bF_q$. We consider  the  curve in $\mathbb  P^2$
\[ X^2W^2= f(X,Y).\]
Since  $f$ has no repeated factors, the only singularity of this curve is the point $(X,Y,W)=(0,0,1)$, which is a cusp singularity. We define $\mE_f$ to be a non-singular model of this  curve. The curve $\mE_f$ has genus $1$, and the singular point of the original curve corresponds to a pair of points of $\mE_f$ around which a  model of $\mE_f$ is:
\[  (\frac{WX}{Y^2})^2=z_4 + z_3 (\frac{X}{Y}) + (\frac{X}{Y})^2+z_1(\frac{X}{Y})^2 +z_0 (\frac{X}{Y})^4. \]
The above mentioned pair of points is $(\tfrac{WX}{Y^2},\tfrac{X}{Y}) = (\pm \sqrt{z_4},0)$.
These $2$ points are defined over $\bF_q$ if and only if  $z_4=f(0,1)$ is a  square  in $\bF_q$. We also note the remaining points of $\mE_f(\bF_q)$ are 
\[ \{ (X,Y,W)=(1,y, \pm \sqrt{f(1,y)})   \colon f(1,y) \text{ is a square}\}.\]
Therefore,
\[ \zeta_f=\# \mE_f(\bF_q)/2.\]
By the Hasse bound, $\# \mE_f(\bF_q) \geq (\sqrt q -1)^2 >0$ and hence $\zeta_f >0$. By the definition of $\zeta_f$, we conclude that there is a point $(x_0,y_0) \in PG(1,q)$ such that $f(x_0,y_0)$ is a non-zero square. Let $g \in G$ such that $g (x_0,y_0)=(0,1)$. Since $\zeta_f=\zeta_{g \cdot f}$, we may assume $(x_0,y_0)=(0,1)$ or equivalently $f(0,1)=z_4$ is a non-zero square. We  write $f(X,Y)=z_0 X^4+ z_1X^3Y+X^2Y^2 + z_3 XY^3 + z_4 Y^4$, with $z_4$ a square. Further, replacing $f$ by $g \cdot f$, where $g = \bbsm \sqrt{z_4}^{-1} & 0\\1 & 1 \besm$, we may assume $f(X,Y)=z_0 X^4+ z_1X^3Y+ z_3 XY^3 +  Y^4+X^2Y^2$. Again replacing $f$ by $g \cdot f$ where $g=\bbsm 1 & 0\\z_3 & 1\besm$, we may assume
$f(X,Y)=z_0 X^4+ z_1X^3Y +  Y^4+X^2Y^2$. We note that the $\jmath$-invariant $r$ of $f$ only depends on the $G$-orbit of $f$ and hence
\[ r= \frac{z_2(f)^6}{\Delta(f)}= (z_0(z_0-1)^2 - z_1^2)^{-1}.\]
We take  $\mE_f$ to be a non-singular model of the projective plane curve
\[X^2W^2= z_0 X^4+ z_1X^3Y+ Y^4+X^2Y^2.\]
We show that the curves $\mE_f$ and $\mE_r$ are isomorphic. Our  main idea is inspired by Theorem 2 in \S 10 of  Mordell's book \cite{Mordell}.
Let $P_+$ and $P_-$ denote the points of $\mE_f$ with coordinates $(XW/Y^2, X/Y)$ being $ (1,0)$ and $(-1,0)$ respectively. Let $P_\infty$ denote the point at infinity of $\mE_r$ with coordinates $(1/S,T/S)=(0,0)$.  We define a map 
$\psi: \mE_f \to \mE_r$ as follows: away from $\{P_+, P_-\}$, we define $\psi$ in terms of the coordinates $(S,T)$ on $\mE_r$ by:
\[ \psi: (1,Y,W) \mapsto (S,T)=((Y^2+W-1)Y+z_1,(z_0-1)-(Y^2+W-1)).\]
We  check that $\psi(1,Y,W)$ lies on $\mE_r$: 
we can write the equation of $\mE_f$ as 
\[\mE_f: (Y^2-1)^2 -W^2 + z_0+z_1Y-1=0,\]
and we can write the equation of $\mE_r$ in terms of  $(\tilde S, \tilde T)=(S-z_1,T-(z_0-1))$  as
\[ \mE_r: \tilde T^3+\tilde T^2 -\tilde S^2 +z_1 \tilde S +(1-z_0) \tilde T=0, \]
(where we have used the fact that $r^{-1}=-z_1^2+(z_0-1)^2z_0$). Therefore, for $(\tilde S, \tilde T)=((Y^2+W-1)Y,-(Y^2+W-1))$, we get 
$\tilde T^3+\tilde T^2 -\tilde S^2 +z_1 \tilde S +(1-z_0) \tilde T$  evaluates to 
\[ (Y^2+W-1)( (Y^2-1)^2 -W^2 +z_1 Y+z_0-1)=0. \]

Near $P_-$, writing the equation of $\mE_f$ as
\[W+Y^2-1=\frac{z_0/Y^2+z_1/Y-1-W/Y^2}{W/Y^2 -1},\]
we see that $W+Y^2-1$ evaluates to $0$ at $P_-$. Similarly, writing
\[ Y(W+Y^2-1)=\frac{(z_0-1)/Y + z_1 - (Y^2+W-1)/Y}{W/Y^2-1},\]
we see that $Y(W+Y^2-1)$ evaluates to $z_1$ at $P_-$. Therefore $\psi(P_-)=(-z_1,z_0-1)$.\\

Near $P_+$, we can express $\psi$ in terms of the coordinates $(1/S,T/S)$ near $P_{\infty}$ as:
\[ (1/S,T/S)=(\frac{\frac{1}{Y^3}}{(1+\frac{W}{Y^2}-\frac{1}{Y^2})+\frac{z_1}{Y^3}}, \frac{\frac{z_0-1}{Y^3}-(1+\frac{W}{Y^2}-\frac{1}{Y^2})/Y}{(1+\frac{W}{Y^2}-\frac{1}{Y^2})+\frac{z_1}{Y^3}}).\]
Since $(1+\frac{W}{Y^2}-\frac{1}{Y^2})$ and $1/Y$ evaluate to $-1$ and $0$ respectively at $P_+$, we see that the right hand side above evaluates to $(1/S,T/S)=(0,
0)$ at $P_+$, or in other words $\psi(P_+)=P_\infty$. We also note that $\psi^{-1}(z_1,z_0-1)$ is the unique point $(1,Y,W)=(1,\tfrac{1-z_0}{z_1}, 1-(\tfrac{1-z_0}{z_1})^2)$ if $z_1 \neq 0$, and $P_-$ if $z_1=0$.\\

We define a map $\psi':\mE_r \to \mE_f$  by 
\[\psi'\colon  (S,T) \mapsto (1,Y,W)=(1, \tfrac{S-z_1}{z_0-1-T}, z_0-T-(\tfrac{S-z_1}{z_0-1-T})^2 ),  \]
away from the points $(S,T)=\{ (\pm z_1,z_0-1), P_\infty \}$. It is readily checked that the point on the right hand side lies on $\mE_f$.  Near $P_\infty$, we express $\psi'$ in terms of the coordinates $(1/S,T/S)$ around $P_\infty$ and the coordinates 
$(W/Y^2,1/Y)$ around $P_+$ by $(1/S,T/S) \mapsto$
\[ (W/Y^2,1/Y)=(\frac{z_0(\tfrac{z_0-1}{S}-\tfrac{T}{S})^2 -(z_0-1)^2 \tfrac{T}{S} \tfrac{1}{S}-(z_0-1) (\tfrac{T}{S})^2 -\tfrac{T^3}{S^2}    }{(1-z_1/S)^2}-1, \frac{\tfrac{z_0-1}{S}-\tfrac{T}{S}}{1-z_1/S}).\]
Writing the equation of $\mE_r$ as
\[ \tfrac{T^3}{S^2}=1+\tfrac{r^{-1}}{S^2}- (\tfrac{T}{S})^2,\]
we see that $T^3/S^2$ evaluates to $1$ at $P_\infty$. Therefore, $\psi'(P_\infty)=P_+$. \\

Near $(-z_1,z_0-1)$ we express $\psi'$ in terms of the coordinates $(W/Y^2,1/Y)$ near $P_-$ as \[ (S,T)\mapsto (W/Y^2,1/Y)=(\frac{(z_0-T)(z_0-T-1)^2}{(S-z_1)^2}-1, \tfrac{z_0-1-T}{S-z_1}). \]
Therefore $\psi'(-z_1,z_0-1)=P_-$. If $z_1=0$, this completes the definition of $\psi'$.  If $z_1 \neq 0$, then we show that  $\psi'(z_1,z_0-1)$ is the point  $(1,Y,W)=(1,\tfrac{1-z_0}{z_1}, 1-(\tfrac{1-z_0}{z_1})^2)$: as above, we use the coordinates $(\tilde S, \tilde T)=(S-z_1,T-(z_0-1))$ in terms of which  $\mE_r$ can be written as  $\tilde T^3+\tilde T^2 -\tilde S^2 +z_1 \tilde S +(1-z_0) \tilde T=0$. We have: 
\[ \frac{\tilde S}{\tilde T}= \frac{z_0-1-\tilde T- \tilde T^2}{z_1-\tilde S},\]
evaluates to $(z_0-1)/z_1$ at $(\tilde S, \tilde T)=(0,0)$, that is, $(S,T)=(z_1,z_0-1)$.
We can express $\psi'$ as 
\[ (\tilde S, \tilde T) \mapsto (1,Y,W)=(1, \tfrac{-\tilde S}{\tilde T}, 1-\tilde T-(\tfrac{\tilde S}{\tilde T})^2 ).\]
Therefore, $\psi'(z_1,z_0-1)=(1,\tfrac{1-z_0}{z_1},1-(\tfrac{1-z_0}{z_1})^2)$. \\

It is readily checked that $\psi' \circ \psi$ is the identity map on $\mE_f$ and $\psi \circ \psi'$ is the identity map on $\mE_r$, and hence $\mE_r$ and $\mE_f$ are isomorphic over $\bF_q$. We conclude that $\zeta_L=\#\mE_f(\bF_q)/2 =\#\mE_r(\bF_q)/2$.
\eep

\section{Line-Plane Incidence Numbers} \label{six}
The hyperplanes of $PG(3,q)=\bP(D_3V)$ can be identified with the dual projective space $\bP( (D_3V)^*)$. Since $D_3V = \text{Sym}^3(V^*)^*$, we identify the planes of $PG(3,q)$ with the binary cubic forms of the projective space $\bP(\text{Sym}^3(V^*))$.
% We recall from Section $2$ that, Sym$^3(V^*)$ is the vector space of binary cubic forms $f(X,Y)$ in $X,Y$, where $V=F_q^2$. There is a bijective map between  and the planes of $PG(3,q)$ which maps a cubic form $f(X,Y)=a_0X^3+a_1X^2Y+a_2XY^2+a_3Y^3$ to the plane $\pi_f$ with equation $a_0Y_0+a_1Y_1+a_2Y_2+a_3Y_3=0$. 
The  classification of planes of $PG(3,q)$ into $G$-orbits can be found in  \cite[Corollary $4$, Lemma 21.1.3]{Hirschfeld3}:
\begin{prop}\label{plane_orbits}
There are five $G$-orbits of  planes of $PG(3,q)$ given by:
\begin{enumerate}
    \item $\mN_1=G\cdot X^3$ of size $(q+1)$, consisting of the osculating planes of $C$.
    \item $\mN_2=G\cdot X^2Y$ of size $q(q+1)$, consisting of planes which meet $C$ in exactly two points of multiplicity $1$ and $2$.
    \item $\mN_3=G\cdot XY(X-Y)$ of size $q(q^2-1)/6$, consisting of planes which intersect $C$ in  exactly three points of multiplicity $1$.
    
    \item $\mN_4=G\cdot X(Y^2-\ep X^2)$ of size $q(q^2-1)/2$, consisting of  planes which intersect $C$ in a point of multiplicity $1$, and a pair of Galois conjugate points of $C(\bF_{q^2})$ with multiplicity $1$ each.
    
    \item $\mN_5=G\cdot f(X,Y)$ of size $q(q^2-1)/3$, where $f(X,Y)$ is irreducible over $\bF_q$, consisting of  planes which meet $C(\bF_{q^3})$ in three Galois conjugate points with multiplicity $1$ each.\\
\end{enumerate}
\end{prop}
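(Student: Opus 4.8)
The plan is to use the standard dictionary between planes and binary cubic forms. Since $D_3V = \mathrm{Sym}^3(V^*)^*$, a plane of $PG(3,q)=\bP(D_3V)$ is the zero locus of a nonzero binary cubic form $h\in\mathrm{Sym}^3(V^*)$, and the $G=PGL_2(q)$-action on planes is precisely the action $g_3$ on binary cubics. A point $\nu_3(s,t)$ of $C$ lies on this plane exactly when $h(s,t)=0$, so the intersection divisor of the plane with $C$ is the divisor of zeros of $h$ on $\bP^1(\overline{\bF_q})$ (with multiplicities), since a hyperplane section of a degree-$3$ rational normal curve is a divisor of degree $3$. Thus classifying $G$-orbits of planes amounts to classifying $PGL_2(q)$-orbits of effective degree-$3$ divisors on $\bP^1$ defined over $\bF_q$, which I would organize by the factorization type of $h$ over $\bF_q$: (i) $h=\ell^3$; (ii) $h=\ell_1\ell_2^2$ with $\ell_1\neq\ell_2$ linear; (iii) $h=\ell_1\ell_2\ell_3$ with three distinct linear factors; (iv) $h=\ell\cdot q_2$ with $q_2$ irreducible quadratic; (v) $h$ irreducible. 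In each case the description of the intersection of the plane with $C$ given in the statement is immediate (e.g.\ in case (i) the osculating plane at $\nu_3(s,t)$ corresponds to $(tX-sY)^3$, as recorded in \S\ref{two}), and $X^3$, $X^2Y$, $XY(X-Y)$, $X(Y^2-\epsilon X^2)$, and any irreducible cubic serve as representatives.

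It then remains to check each type forms a single $G$-orbit and to compute its size. For (i), (ii), (iii) this follows at once from the sharp $3$-transitivity of $G$ on $PG(1,q)$: one point, one ordered pair of distinct points, and one unordered triple of distinct points (send it to $\{0,1,\infty\}$) up to $G$; the sizes are $q+1$, $q(q+1)$, and $\binom{q+1}{3}=q(q^2-1)/6$. For (iv) I would first move the $\bF_q$-rational root to $\infty$ by an element of $G$; the remaining symmetry is the affine group $\{t\mapsto at+b:a\in\bF_q^\times,\ b\in\bF_q\}$. Writing the conjugate pair as $\{u,u^q\}$ with $u\in\bF_{q^2}\setminus\bF_q$, a translation (using $\mathrm{char}\,\bF_q\neq 2$) makes its trace zero, so the pair becomes $\{w,-w\}$ with $w^2\in\bF_q$ a non-square; scaling by $a\in\bF_q^\times$ then multiplies $w^2$ by the square $a^2$, so we reach $w^2=\epsilon$, giving the representative $X(Y^2-\epsilon X^2)$ and orbit size $(q+1)\cdot(q^2-q)/2=q(q^2-1)/2$.

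For (v) I would use the same device as in the proof of Proposition \ref{Lambda_prop}: fix a reference irreducible cubic $h_0$ with roots $\beta,\phi(\beta),\phi^2(\beta)$, and given $h$ with roots $\alpha,\phi(\alpha),\phi^2(\alpha)$, let $g\in PGL_2(\overline{\bF_q})$ be the unique element with $g(\alpha)=\beta$, $g(\phi(\alpha))=\phi(\beta)$, $g(\phi^2(\alpha))=\phi^2(\beta)$; applying $\phi$ and using $\phi^3(\alpha)=\alpha$ shows $\phi(g)$ has the same property, so $\phi(g)=g$ by uniqueness and $g\in G$. Hence (v) is a single orbit, of size $(q^3-q)/3=q(q^2-1)/3$. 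Finally, adding up the five orbit sizes yields $(q+1)(1+q+q(q-1))=(q+1)(q^2+1)$, the number of planes of $PG(3,q)$, which simultaneously verifies that the five orbits are pairwise disjoint and exhaust all planes. The only step needing care is case (iv), where one must track squares against non-squares under the affine group; since $1/2$ exists in characteristic $3$, the reduction causes no trouble, and the genuinely routine parts — the factorization enumeration and the incidence descriptions — I would leave to the reader.
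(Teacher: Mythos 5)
Your proof is correct. Note, however, that the paper does not prove Proposition \ref{plane_orbits} at all: it simply quotes the classification from Hirschfeld \cite[Corollary 4, Lemma 21.1.3]{Hirschfeld3}. So your argument is a self-contained replacement for that citation rather than a variant of an argument in the text. It does fit the paper's framework exactly: the identification of planes with points of $\bP(\mathrm{Sym}^3(V^*))$ and of the $G$-action on planes with the action $g_3$ on binary cubics is precisely how \S\ref{six} treats planes, and the Galois-descent step in your case (v) (the unique $g\in PGL_2(\overline{\bF_q})$ matching two Frobenius-ordered root triples must be Frobenius-fixed, hence in $PGL_2(q)$) is the same device used in the injectivity part of the proof of Proposition \ref{Lambda_prop}. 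The remaining ingredients are all sound: sharp $3$-transitivity disposes of the split types (i)--(iii); in type (iv) the reduction after sending the rational root to $\infty$ is legitimate since $2$ is invertible in characteristic $3$, and the trace-zero element $w$ indeed has $w^2$ a non-square of $\bF_q$, so scaling reaches $w^2=\epsilon$ and the representative $X(Y^2-\epsilon X^2)$; and the five orbit sizes $q+1$, $q(q+1)$, $q(q^2-1)/6$, $q(q^2-1)/2$, $q(q^2-1)/3$ sum to $(q+1)(q^2+1)$, the total number of planes (though disjointness and exhaustion already follow from the $G$-invariance of factorization type, so the count is a cross-check rather than a needed step).
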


\begin{lem}\cite[Lemma 15.2.3]{Hirschfeld3} \label{line_plane}
    Let $L$ be a line of $PG(3,q)$ with Pl\"ucker coordinates $(p_{01},p_{02},p_{03},p_{12},p_{13},p_{23})$. The pencil of planes through $L$ consists of cubic forms $a_0X^3+a_1X^2Y+a_2XY^2+a_3Y^3$, such that $(a_0,a_1,a_2, a_3)$ is in the kernel of the rank $2$ alternating matrix 
    \[ \bbm
0 & p_{01} & p_{02} & p_{03} \\
-p_{01} & 0 & p_{12} & p_{13} \\
-p_{02} & -p_{12} & 0 & p_{23} \\
-p_{03} & -p_{13} & -p_{23} & 0 
\bem  = \bbm
0 & z_4 & z_3 & z_2 \\
-z_4 & 0 & z_5 & z_1 \\
-z_3 & -z_5 & 0 & z_0 \\
-z_2 & -z_1 & -z_0 & 0 
\bem.  \]

\end{lem}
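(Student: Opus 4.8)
The plan is to reduce the statement to a short computation with the rank-two alternating matrix attached to a bivector. First I would write the line $L$ as the span of two linearly independent points $u=(u_0,u_1,u_2,u_3)$ and $v=(v_0,v_1,v_2,v_3)$ of $D_3V$, so that its Pl\"ucker coordinates are $p_{ij}=u_iv_j-u_jv_i$ for $0\le i<j\le 3$. Under the identification of the planes of $PG(3,q)=\bP(D_3V)$ with binary cubic forms coming from $D_3V=\mathrm{Sym}^3(V^*)^*$, a cubic form $c(X,Y)=a_0X^3+a_1X^2Y+a_2XY^2+a_3Y^3$ corresponds to the hyperplane on which a point $w=\sum_i w_iB_i$ lies exactly when $\sum_i a_iw_i=0$, since $B_0,\dots,B_3$ is dual to the basis $X^3,X^2Y,XY^2,Y^3$. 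Hence the plane of $c$ contains $L$ if and only if $a\cdot u=0$ and $a\cdot v=0$, that is, $a\in u^{\perp}\cap v^{\perp}$.

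The key step is to note that the alternating matrix $P$ in the statement, whose $(i,j)$ entry is $p_{ij}$ for $i<j$, is precisely $P=uv^{T}-vu^{T}$, and therefore $Pa=(v\cdot a)\,u-(u\cdot a)\,v$ for every $a\in\bF_q^4$. Because $u$ and $v$ are linearly independent this gives $Pa=0$ if and only if $u\cdot a=v\cdot a=0$, so $\ker P=u^{\perp}\cap v^{\perp}$, which by the previous paragraph is exactly the set of planes through $L$. The same formula, with $a$ now ranging over all of $\bF_q^4$, shows that the column space of $P$ is $\mathrm{span}(u,v)$ — the pair $(v\cdot a,\,u\cdot a)$ takes every value in $\bF_q^2$ — so $\mathrm{rank}(P)=2$; equivalently $\bP(\ker P)$ is a line, which is the assertion that the planes through $L$ form a pencil. (One could instead deduce $\mathrm{rank}(P)=2$ from the single Pl\"ucker relation $z_0z_4-z_1z_3+z_2z_5=0$, but the direct argument avoids case analysis.)

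Finally, to pass to the second matrix in the statement I would simply substitute the renaming $(z_0,z_1,z_2,z_3,z_4,z_5)=(p_{23},p_{13},p_{03},p_{02},p_{01},p_{12})$ fixed in Section~\ref{two}, which carries $P$ to the displayed matrix in the $z_i$. I do not expect a genuine obstacle here; the only thing demanding care is bookkeeping, namely matching the sign and index conventions for Pl\"ucker coordinates of \cite{Hirschfeld3} with those of Section~\ref{two}, and checking that the duality pairing $D_3V\times\mathrm{Sym}^3(V^*)\to\bF_q$ is the one induced by the chosen dual bases, so that ``$w$ lies on the plane of $c$'' really does translate to $\sum_i a_iw_i=0$.
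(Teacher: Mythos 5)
Your argument is correct. Writing $L=\langle u,v\rangle$, the identification $P=(p_{ij})=uv^{T}-vu^{T}$ and the resulting identity $Pa=(v\cdot a)\,u-(u\cdot a)\,v$ do give $\ker P=u^{\perp}\cap v^{\perp}$, which under the stated duality (planes of $\bP(D_3V)$ $\leftrightarrow$ cubic forms, with $B_i$ dual to $X^{3-i}Y^i$, so the plane of $\sum a_iX^{3-i}Y^i$ is $\sum a_iw_i=0$) is exactly the pencil of planes through $L$; the surjectivity of $a\mapsto(u\cdot a,v\cdot a)$ for independent $u,v$ gives $\mathrm{rank}\,P=2$, and the passage to the $z_i$-matrix is indeed only the relabelling $(z_0,\dots,z_5)=(p_{23},p_{13},p_{03},p_{02},p_{01},p_{12})$ fixed in \S\ref{two}. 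The route differs from the paper's in that the paper does not prove the kernel description at all: it quotes \cite[Lemma 15.2.3]{Hirschfeld3} for the statement and only justifies the rank, by noting that the rank of a nonzero alternating matrix is a positive even integer and that the determinant is the square of the Pfaffian $(p_{01}p_{23}-p_{02}p_{13}+p_{03}p_{12})^2$, which vanishes by the Klein quadric relation. Your decomposition $P=uv^{T}-vu^{T}$ makes the lemma self-contained and yields kernel and rank in one stroke without invoking the Pfaffian or the Pl\"ucker relation (indeed it implicitly re-proves that relation), at the cost of fixing and checking the duality and sign conventions, which you correctly flag as the only delicate bookkeeping point.
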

We note that  the rank of the alternating matrix above is a positive even integer, and its determinant equals $(p_{01}p_{23} -p_{02}p_{13}+p_{03}p_{12})^2=0$. Therefore,  its rank is $2$.

\begin{cor}\label{pencil-planes}
    Let $L$ be a line of $PG(3,q)$ not meeting $\mA$. Let  $f_L=X^2Y^2+ z_0X^4+z_1X^3Y+z_3XY^3+z_4Y^4$ be the binary quartic form associated  with $L$. The pencil of planes through $L$ is given by $\langle z_1X^3-X^2Y+z_4Y^3, z_0X^3-XY^2+z_3 Y^3\rangle$.
\end{cor}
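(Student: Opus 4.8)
The plan is to apply Lemma \ref{line_plane} directly. Since $L$ does not meet the axis $\mA$, we have $z_2(f_L) \neq 0$, so we may normalize the representative quartic form $f_L$ so that the coefficient of $X^2Y^2$ equals $1$; thus $f_L = X^2Y^2 + z_0 X^4 + z_1 X^3 Y + z_3 XY^3 + z_4 Y^4$, and by the discussion in Section \ref{two} (specifically equation \eqref{change-coordinate}) the Pl\"ucker coordinates of $L$ are $(p_{01},p_{02},p_{03},p_{12},p_{13},p_{23}) = (z_4, z_3, 1, z_1 z_3 - z_0 z_4, z_1, z_0)$, so in particular $z_2 = p_{03} = 1$ and $z_5 = p_{12} = z_1 z_3 - z_0 z_4$. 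By Lemma \ref{line_plane}, the pencil of planes through $L$ corresponds to the kernel of the alternating matrix
\[
N = \bbm
0 & z_4 & z_3 & 1 \\
-z_4 & 0 & z_5 & z_1 \\
-z_3 & -z_5 & 0 & z_0 \\
-1 & -z_1 & -z_0 & 0
\bem,
\]
which has rank $2$, hence a $2$-dimensional kernel, matching the $(q+1)$ planes of a pencil.

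The next step is to exhibit two explicit, visibly independent vectors in $\ker N$. I claim $(a_0,a_1,a_2,a_3) = (z_1, -1, 0, z_4)$ and $(a_0,a_1,a_2,a_3) = (z_0, 0, -1, z_3)$ both lie in the kernel; these correspond to the cubic forms $z_1 X^3 - X^2 Y + z_4 Y^3$ and $z_0 X^3 - XY^2 + z_3 Y^3$ respectively. Checking this is a direct computation: multiplying $N$ by the first vector gives the four entries $-z_4 + z_4 = 0$ (row 1), $-z_1 z_4 + z_1 z_4 = 0$ (row 2, using $z_5 z_1 \cdot 0$... more precisely $-z_4 z_1 + 0 + 0 + z_1 z_4$), then row 3 gives $-z_1 z_3 + z_5 + z_0 z_4 = -z_1 z_3 + (z_1 z_3 - z_0 z_4) + z_0 z_4 = 0$, and row 4 gives $-z_1 + z_1 = 0$; similarly for the second vector, where the only nontrivial cancellation again invokes $z_5 = z_1 z_3 - z_0 z_4$. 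Since the $2\times 2$ minor in the second and third coordinates of the two vectors is $\left(\begin{smallmatrix} -1 & 0 \\ 0 & -1 \end{smallmatrix}\right)$, which is nonsingular, the two vectors are linearly independent and therefore span the $2$-dimensional kernel.

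There is no real obstacle here: the content is entirely the identification of the Pl\"ucker coordinates of $L$ with the coefficients of $f_L$ (already established in Proposition \ref{correspondence} and equation \eqref{change-coordinate}) together with the rank-$2$ statement of Lemma \ref{line_plane}. The only point requiring a small amount of care is remembering to use the Klein relation in the normalized form $z_5 = z_1 z_3 - z_0 z_4$ (equivalently $z_0 z_4 - z_1 z_3 + z_2 z_5 = 0$ with $z_2 = 1$), since that is what makes the row-$3$ computations vanish; this is exactly the condition that the Pl\"ucker vector lies on the Klein quadric, so it is automatic for an actual line. Hence the pencil of planes through $L$ is $\langle z_1 X^3 - X^2 Y + z_4 Y^3,\ z_0 X^3 - XY^2 + z_3 Y^3 \rangle$, as claimed.
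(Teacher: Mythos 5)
Your proposal is correct and follows essentially the same route as the paper: both apply Lemma \ref{line_plane} with $z_2=1$ together with the rank-$2$ observation and the Klein relation $z_5=z_1z_3-z_0z_4$. The only cosmetic difference is that the paper reads the two spanning cubics directly off the two linear equations coming from the first and last rows of the matrix, while you verify all four rows for the two candidate vectors and then check independence explicitly; your computations are all correct.
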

\begin{proof}
    Let $a_0X^3+a_1X^2Y+a_2XY^2+a_3Y^3$ be a cubic form in the pencil of planes through $L$. Then by Lemma \ref{line_plane}, $a_0,a_1,a_2,a_3,a_4$ satisfy the the following two equations
    \begin{align*}
        a_1z_4+a_2z_3+a_3&=0,\\
        a_0+a_1z_1+a_2z_0&=0.
    \end{align*}
    From these two equations, we can see that the pencil of planes through $L$ contains the cubic forms $z_1X^3-X^2Y+z_4Y^3$ and  $z_0X^3-XY^2+z_3 Y^3$. Thus the pencil of planes through $L$ can be given by $\langle z_1X^3-X^2Y+z_4Y^3, z_0X^3-XY^2+z_3 Y^3\rangle$.
\end{proof}

%     Let $\pi$ be a plane of $PG(3,q)$ represented by the cubic form  Then $L$ lies on $\pi$ if and only if 
% \[
% \bbm
% a_0 & a_1 & a_2 & a_3
% \bem
% =0.
% \]
% The rank of the alternating matrix above is $2$
% Since the rank of the alternating matrix above is a positive even integer, and its determinant $(p_{01}p_{23} -p_{02}p_{13}+p_{03}p_{12})^2=0$

 \begin{lem}\cite[Corollary 1.15(ii)]{Hirschfeld2}\label{cubicr}
     Let $F(T)\in \bF_q[T]$ be a cubic polynomial over $\bF_q$ in one variable $T$, with discriminant $\Delta(F) \neq 0$. Then $F(T)$ has exactly one zero in $\bF_q$ if and only if $\Delta$ is a non-square.
\end{lem}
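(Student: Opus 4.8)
The plan is to deduce this from the Galois theory of the splitting field of $F$ over $\bF_q$, using the sign character on the roots of $F$; this is available precisely because $\mathrm{char}(\bF_q)=3$ is odd. Since $\Delta(F)\neq 0$, $F$ is separable. Write $r_1,r_2,r_3\in\overline{\bF_q}$ for its roots and $K=\bF_q(r_1,r_2,r_3)$ for its splitting field. The group $\mathrm{Gal}(K/\bF_q)$ is cyclic, generated by the Frobenius $\phi$, and its faithful action on $\{r_1,r_2,r_3\}$ identifies it with a cyclic subgroup of $S_3$.

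Next I would bring in the square root of the discriminant: set $\delta=(r_1-r_2)(r_1-r_3)(r_2-r_3)$, so that $\delta^2=\Delta(F)$ and $\delta\neq 0$. For any $\sigma\in S_3$ one has $\sigma(\delta)=\mathrm{sgn}(\sigma)\,\delta$. Because $\mathrm{char}(\bF_q)\neq 2$, the scalars $+1$ and $-1$ are distinct, so $\delta$ is fixed by $\mathrm{Gal}(K/\bF_q)$ — equivalently $\delta\in\bF_q$, equivalently $\Delta(F)$ is a nonzero square in $\bF_q$ — if and only if the image of $\mathrm{Gal}(K/\bF_q)$ in $S_3$ is contained in the alternating group $A_3=\langle(123)\rangle$.

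Then I would run the (short) case analysis on the order of $\phi$ acting on the three roots. A degree-$3$ polynomial cannot have exactly two roots in $\bF_q$, since the sum of its roots lies in $\bF_q$ and forces the third root into $\bF_q$; hence $F$ has $0$, $1$, or $3$ roots in $\bF_q$, matching $\phi$ of order $3$, $2$, or $1$ respectively. If $\phi$ is trivial, $F$ splits over $\bF_q$ (three roots), the image lies in $A_3$, and $\Delta$ is a square. If $\phi$ has order $2$, then being a nontrivial element of a cyclic group of order $2$ inside $S_3$ it is a transposition; it fixes exactly one root (the one in $\bF_q$) and interchanges the other two, which lie in $\bF_{q^2}\setminus\bF_q$, so $F$ has exactly one root in $\bF_q$; the image $\langle(ij)\rangle$ is not contained in $A_3$, so $\Delta$ is a non-square. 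If $\phi$ has order $3$, it is a $3$-cycle, $F$ is irreducible over $\bF_q$ with no root in $\bF_q$, the image equals $A_3$, and $\Delta$ is a square.

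Chaining these equivalences gives the claim: $F$ has exactly one root in $\bF_q$ $\iff$ $\phi$ acts as a transposition $\iff$ the image of $\mathrm{Gal}(K/\bF_q)$ in $S_3$ is not contained in $A_3$ $\iff$ $\Delta(F)$ is a non-square in $\bF_q$. The only genuinely delicate point is the sign-character step, which fails in characteristic $2$ (where one needs an Artin–Schreier-type substitute); since $\mathrm{char}(\bF_q)=3$, the classical argument applies verbatim, and everything else is routine bookkeeping with cyclic subgroups of $S_3$.
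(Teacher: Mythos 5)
Your proof is correct. Note, however, that the paper does not prove this lemma at all: it is quoted verbatim from Hirschfeld \cite[Corollary 1.15(ii)]{Hirschfeld2}, where it is obtained from the explicit theory of cubic equations over $\bF_q$ (reduction to canonical form and resolvent computations, with separate treatment of characteristics $2$ and $3$). Your argument replaces that computational source by the standard Galois-theoretic one: identify $\mathrm{Gal}(K/\bF_q)=\langle\phi\rangle$ with a cyclic subgroup of $S_3$ via its action on the roots, use $\delta=\prod_{i<j}(r_i-r_j)$ and the sign character to translate ``$\Delta$ is a square in $\bF_q$'' into ``the image of Frobenius lies in $A_3$'' (valid since $\mathrm{char}(\bF_q)=3\neq 2$), and then read off the root count from the order of $\phi$, using the observation that a cubic cannot have exactly two roots in $\bF_q$. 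All steps check out, and this is arguably cleaner and more self-contained than the citation. One small point worth a line: for a non-monic cubic $F=aT^3+\cdots$ one has $\Delta(F)=a^4\delta^2$ rather than $\delta^2$, so ``$\delta\in\bF_q$'' is equivalent to ``$\Delta(F)$ is a square'' only after noting that $a^4$ is a nonzero square; this matters here because the paper applies the lemma to the non-monic cubics $F_{s,t}(T)$ in the proof of Theorem \ref{result3}, with the discriminant normalized as in \cite[Lemma 1.18]{Hirschfeld2}. With that remark inserted, your proof is a complete substitute for the cited result.
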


In the next Proposition, for $\mO$ being any of the $10$ orbits of non-generic lines of $PG(3,q)$,  we compute the numbers $\{|S \cap \mN_i| \colon  i = 1, \dots ,5\}$, where $S$ is the pencil of planes through a line $L$ representing $\mO$.
\begin{prop} \label{nongenericresult3}
    Let $L$ be a non-generic line of $PG(3,q)$ and let $S$ denote the set of planes through $L$. For $L\in \{\mO_7,\mO_2,\mO_4,\mO_{8.1}^+,\mO_{8.1}^-,\mO_{8.2},\mO_1,\mO_3,\mO_5^+,\mO_5^-\}$, the numbers $|S\cap \mN_i|$, for $i=1,\dots,5$ can be given by the following table:
\begin{table}[h] 
\begin{tabular}{c| *{5}{c}}
Line orbits & $|S\cap \mN_1|$ & $|S \cap \mN_2|$ & $|S\cap \mN_3|$ & $|S\cap \mN_4|$ & $|S\cap \mN_5|$ \\
\hline
$\mO_7$ &  $q+1$ &  $0$ & $0$ & $0$ & $0$ \\
$\mO_2$ & $1$ & $q$ & $0$ & $0$ & $0$ \\
$\mO_4$ & $1$ & $1$ & $\tfrac{q-1}{2}$ & $\tfrac{q-1}{2}$ & $0$ \\
$\mO_{8.1}^+$ & $1$ & $0$ & $\tfrac{q}{3}$ & $0$ & $\tfrac{2q}{3}$ \\
$\mO_{8.1}^-$ & $1$ & $0$ & $0$ & $q$ & $0$ \\
$\mO_{8.2}$ & $1$ & $1$ & $\tfrac{q-3}{6}$ & $\tfrac{q-1}{2}$ & $\tfrac{q}{3}$ \\
$\mO_1$ & $0$  & $2$ & $q-1$ & $0$ & $0$ \\
$\mO_3$ & $0$ & $0$ & $0$ & $q+1$ & $0$ \\
$\mO_5^+$ & $0$ & $3$ & $\tfrac{q-3}{2}$ & $\tfrac{q-1}{2}$ & $0$\\
$\mO_5^-$ & $0$ & $1$ & $\tfrac{q-1}{2}$ & $\tfrac{q+1}{2}$ & $0$
\end{tabular}
\end{table}
    
    % \begin{enumerate}
    %     \item If $L\in \mO_7$, then $|S\cap \mN_1|=q+1$ and $|S\cap \mN_i|=0$, for $i=2,\dots,5$.
    %     \item If $L\in \mO_2$, then  $|S\cap \mN_1|=1,|S\cap \mN_2|=q$, $|S\cap \mN_i|=0$, for $i=3,4,5$.
    %     \item If $L\in \mO_4$, then  $|S\cap \mN_1|=|S\cap \mN_2|=1$, $|S\cap \mN_3|=|S\cap \mN_4|=\tfrac{q-1}{2}$ and $|S\cap \mN_5|=0$.
    %     \item If $L\in \mO_{8.1}^+$, then  $|S\cap \mN_1|=1,|S\cap \mN_2|=0$, $|S\cap \mN_3|=\tfrac{q}{3},|S\cap \mN_4|=0$ and $|S\cap \mN_5|=\tfrac{2q}{3}$.
    %     \item If $L\in \mO_{8.1}^-$, then  $|S\cap \mN_1|=1,|S\cap \mN_2|=|S\cap \mN_3|=0,|S\cap \mN_4|=q$ and $|S\cap \mN_5|=0$.
    %     \item If $L\in \mO_{8.2}$, then  $|S\cap \mN_1|=1,|S\cap \mN_2|=1,|S\cap \mN_3|=\tfrac{q-3}{6},|S\cap \mN_4|=\tfrac{q-1}{2}$ and $|S\cap \mN_5|=\tfrac{q}{3}$.
    % \end{enumerate}
\end{prop}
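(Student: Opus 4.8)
The plan is to prove Proposition \ref{nongenericresult3} by working through the ten non-generic line orbits one at a time, using the explicit representatives already fixed in Section \ref{four} together with Corollary \ref{pencil-planes} (or Lemma \ref{line_plane} directly for the lines meeting $\mA$, where $f_L$ is not available). For each representative line $L$ with Pl\"ucker coordinates $(z_0,\dots,z_5)$, I would write down a basis $\{g(X,Y),h(X,Y)\}$ of the pencil of cubic forms through $L$ from the kernel of the alternating matrix in Lemma \ref{line_plane}, parametrize the pencil as $\{s\,g + t\,h : (s,t)\in PG(1,q)\}$, and for each $(s,t)$ determine which of the five orbit types $\mN_1,\dots,\mN_5$ the cubic $s\,g+t\,h$ falls into. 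Since $G$ acts transitively on each $\mN_i$ and preserves the incidence, the resulting counts depend only on the orbit of $L$, so a single representative suffices.

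The key computational device is the classification of binary cubic forms over $\bF_q$ (characteristic $3$) by their factorization type: a cubic $F$ with $\Delta(F)\neq 0$ lies in $\mN_3$, $\mN_4$, or $\mN_5$ according as it has $3$, $1$, or $0$ roots in $\bF_q$, and Lemma \ref{cubicr} identifies the $\mN_4$ case as exactly when $\Delta(F)$ is a non-square; a cubic with a repeated root lies in $\mN_1$ (triple root) or $\mN_2$ (double root). So for each pencil I would compute the discriminant $\Delta(s\,g+t\,h)$ as a form in $(s,t)$, count its zeros in $PG(1,q)$ (these give the $\mN_1\cup\mN_2$ contributions, separated by whether the cubic degenerates to a cube), and then among the remaining $(s,t)$ count how often $\Delta$ is a square versus a non-square to split $\mN_3\cup\mN_5$ from $\mN_4$. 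The square/non-square counts will typically reduce to counting points on a conic or to a Jacobi-sum/character-sum evaluation; for the orbits $\mO_{8.1}^{\pm}$ and $\mO_{8.2}$, where fractions like $q/3$ and $(q-3)/6$ appear, the relevant count is governed by the cubic residue structure, i.e. by $\mathrm{Tr}_{\bF_q/\bF_3}$ and the trace-zero hyperplane, exactly as in Lemma \ref{Ji_lem} and \cite[Corollary 1.23]{Hirschfeld2}. For instance $\mO_2$ (the tangent lines, $f_L$ corresponding to $X^4$) gives the pencil $\langle X^2Y, X^3\rangle = \{X^2(sY+tX)\}$, every member of which has a double root, lying in $\mN_2$ except the single osculating plane $X^3$ in $\mN_1$ — giving the row $(1,q,0,0,0)$; the harder rows follow the same template but with genuinely nontrivial discriminant forms.

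I expect the main obstacle to be the three orbits $\mO_{8.1}^+,\mO_{8.1}^-,\mO_{8.2}$ of external lines inside osculating planes, and to a lesser extent $\mO_4$ and $\mO_5^{\pm}$: here the discriminant of the pencil is a genuine quartic or cubic in $(s,t)$ whose zero set and square-values must be counted precisely, and the appearance of $q/3$, $2q/3$, $(q-3)/6$ signals that one must invoke the characteristic-$3$ trace condition rather than a naive conic count. Concretely, for $\mO_{8.1}^+ = G\cdot(1,0,0,0,0,1)$ the pencil is $\langle X^3+Y^3 - X^2Y, XY^2 \rangle$-type (after reading off the kernel), and classifying its members requires recognizing when a cubic of the shape $T^3 + aT + a$ has $0$, $1$, or $3$ roots, which is precisely \cite[Corollary 1.23]{Hirschfeld2} and partitions the $q$ non-degenerate fibers into a $2q/3$ part (no roots, $\mN_5$) and a $q/3$ part ($3$ roots, $\mN_3$), plus one degenerate osculating plane in $\mN_1$. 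The bookkeeping is routine once the right normal form is chosen, so the real work is choosing representatives that make each discriminant as simple as possible and then citing the cubic-classification lemmas rather than re-deriving them; consistency is checked at the end by verifying $\sum_i |S\cap\mN_i| = q+1$ in every row.
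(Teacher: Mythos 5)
Your plan is essentially the paper's own proof: it proceeds orbit by orbit through the same ten representatives, reads off the pencil of cubic forms from the alternating matrix of Lemma \ref{line_plane} (or Corollary \ref{pencil-planes} for the lines missing $\mA$), and sorts the pencil members into $\mN_1,\dots,\mN_5$ by their factorization type, invoking the characteristic-$3$ cubic root-count criteria (\cite[Corollary 1.23]{Hirschfeld2}, Lemma \ref{Ji_lem}) for the $\mO_{8.1}^{\pm}$ and $\mO_{8.2}$ rows exactly as the paper does. The only variation --- organizing the $\mN_4$ versus $\mN_3\cup\mN_5$ split via the discriminant $\Delta(sg+th)$ and Lemma \ref{cubicr} instead of factoring each member directly --- is cosmetic, so the proposal is correct up to the routine case computations it defers.
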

\begin{proof} A line $L$ which meets $\mA$ must be contained in an osculating plane of $C$. Therefore, we may assume $L$ is contained in the osculating plane to $C$ at $\nu_3(0,1)$. In other words, we may take the Pl\"ucker coordinates of $L$ to satisfy $p_{01}=p_{02}=p_{03}=0$, that is, $z_2=z_3=z_4=0$. Therefore, the kernel of the alternating matrix above in Lemma \ref{line_plane} is spanned by $(1,0,0,0)$ and $(0,z_0,-z_1,z_5)$. In other words the pencil of cubic forms through $L$ is $\langle X^3, Y(z_0X^2-z_1XY+z_5Y^2) \rangle$.
\begin{enumerate}
    \item For $\mO_7=G \cdot(0,0,0,0,0,1)$, the pencil is $\langle X^3, Y^3 \rangle$ which is completely contained in $\mN_1$.
    
\item For $\mO_2=G \cdot (1,0,0,0,0,0)$, the pencil is $\langle X^3, YX^2 \rangle$ which has $1$ point in $\mN_1$ and $q$ points in $\mN_2$.

\item For  $\mO_4=G \cdot (0,1,0,0,0,0)$, the pencil is $\langle X^3, XY^2\rangle$ which contains $1$ point each in $\mN_1, \mN_2$ and $(q-1)/2$  points each in  $\mN_3,\mN_4$.

\item  For $\mO_{8.1}^+=G \cdot (1,0,0,0,0,1)$, the pencil is $\langle X^3, Y(X^2+Y^2)\rangle$. Apart from $X^3$ which is in $\mN_1$, the remaining $q$ points of the pencil have $q/3$ points in $\mN_3$ and $2q/3$ points in $\mN_5$ because the polynomial $y^3+y- \lambda$ has either $3$  or $0$ roots in $\bF_q$, and there are  $q/3$ and $2q/3$ values of $\lambda$ respectively for each of these two possibilities.

\item  For  $\mO_{8.1}^-=G \cdot (1,0,0,0,0,\ep)$, the pencil is $\langle X^3, Y(X^2+\ep Y^2)\rangle$. Apart from $X^3$ which is in $\mN_1$, the remaining $q$ points of the pencil are in $\mN_4$ because the polynomial $y^3+\ep y- \lambda$ has either exactly $1$  root in $\bF_q$ for each $\lambda \in \bF_q$.

\item For $\mO_{8.2}=G \cdot (0,1,0,0,0,1)$, the pencil is 
$\langle X^3, Y^2(Y-X) \rangle$. Apart from $X^3$ which is in $\mN_1$, and $Y^2(Y-X)$ which is in $\mN_2$, the remaining $(q-1)$ points of the pencil have $|J_4|, |J_2|$ and $|J_1|$ points in $\mN_3, \mN_4$ and $\mN_5$, because the polynomial $y^3-y^2+ \lambda=E_{\lambda^{-1}}(1,y)$ has either $(i-1)$ roots in $\bF_q$ when $\lambda^{-1} \in J_i$ for $i \in \{1,2,4\}$, according to Lemma \ref{Ji_lem}.
\end{enumerate}

Let $L$ be a non-generic line meeting the axis and $f_L$ be the binary quartic form associated with it. Since the coefficient $z_2$ of $X^2Y^2$ of $f_L$ is non-zero, we can take $f_L(X,Y)=X^2Y^2+(z_0X^4+z_1X^3Y+z_3XY^3+z_4Y^4)$. By Corollary \ref{pencil-planes}, the pencil of planes through $L$ is given by $\langle z_1X^3-X^2Y+z_4Y^3, z_0X^3-XY^2+z_3 Y^3\rangle$.
\begin{enumerate}
\item For $\mO_1=G\cdot (0,0,1,0,0,0)$, the pencil is $\langle X^2Y, XY^2\rangle$ which has $2$ points in $\mN_2$ and remaining $(q-1)$ points in $\mN_3$.
\item  For $\mO_3=G \cdot (\ep,0,1,0,1/\ep,-1)$,  the pencil is $\langle Y^3- \ep X^2Y, \ep X^3-XY^2\rangle$ all whose points are in $\mN_4$.

\item  For $\mO_5^+=G \cdot (0,0,1,-1,0,0)$, the pencil is  $\langle X^2Y, XY^2+ Y^3\rangle$. Apart from $X^2Y$ and $Y(Y-X)^2$ which is $\mN_2$, the remaining $q-1$ points of the pencil are $\{Y((Y-X)^2- \lambda X^2) \colon \lambda \in \bF_q^\times\}$ of which one of them (for $\lambda=1$) is in $\mN_2$, and there are $(q-3)/2$ points and $(q-1)/2$ points in $\mN_3$  and $\mN_4$, respectively.

\item For $\mO_5^-=G \cdot (0,0,1,0,-1/\ep,0)$, the pencil is $\langle Y(\ep X^2+Y^2), XY^2\rangle$. Apart from $XY^2$  which is $\mN_2$, the remaining $q$ points of the pencil are $\{Y((Y-\lambda X)^2- (\lambda^2-\ep) X^2) \colon \lambda \in \bF_q\}$ of which there are $(q-1)/2$ points and $(q+1)/2$ points in  $\mN_3$  and $\mN_4$, respectively.
\end{enumerate}
\end{proof}

\subsection{Proof of Theorem \ref{result3}}
Let $L$ be a generic line and $f_L$ be the binary quartic form associated with $L$, with $z_2(f_L)=1$ and $\jmath(f_L)=r$. Let  $\mu_L$ denotes the number of distinct linear factors of $f_L$ over $\bF_q$:
\[ \mu_L = \begin{cases} i &\text{ if $f_L \in \mF_i$ for $i \in \{1,2,4\}$},\\
0 &\text{if $f_L \in \mF_2' \cup \mF_4'$}. \end{cases} \]
We note that we may rewrite the assertion of Theorem \ref{result3} as
\begin{align*}
|S \cap \mN_1|&=0,\\
 |S \cap \mN_2|&=\mu_L,\\
|S \cap \mN_4|&=q+1 -  \tfrac{\#\mE_r(\bF_q)+ \mu_L}{2},\\
 |S\cap \mN_3|&=\tfrac{\#\mE_r(\bF_q)-3 \mu_L}{6}, \\
|S\cap \mN_5|&=\tfrac{\#\mE_r(\bF_q)}{3}.
\end{align*} 
We will prove this equivalent assertion. Let
\[ \nu_L=\#\{ (s,t) \in PG(1,q) \colon  f_L(s,t) \text{ is a non-square} \}.\]

From Corollary \ref{pencil-planes}, it follows that the pencil of planes through $L$ is given by $\langle z_1X^3-X^2Y+z_4Y^3, z_0X^3-XY^2+z_3 Y^3\rangle$. Let $F_{s,t}(T)\in \bF_q[T]$ denote the cubic polynomial $(sz_1+tz_0)T^3-sT^2-t T+(sz_4+t z_3)$. By \cite[Lemma 1.18]{Hirschfeld2}, the discriminant of $F_{s,t}(T)$ is given by 
\[
\Delta(F_{s,t})=z_0s^4+z_1s^3t+s^2t^2+z_3st^3+z_4t^4=f_L(s,t). \]
For a  line $L$ not meeting $C$, we claim that for each $(x,y) \in PG(1,q)$, the linear form $(Xy-Yx)$ divides a unique element of the pencil of cubic forms $\{F_{s,t} \colon (s,t) \in PG(1,q)\}$: to see this we note that if $(Xy-Yx)$ divides $2$ distinct elements of the pencil, then $\nu_3(x,y) \in L$, which is not the case. On the other hand, the join of $L$ and $\nu_3(x,y)$  is a plane through $L$, because $\nu_3(x,y) \notin L$, and hence $(Xy-Yx)$ must divide some element of the pencil $F_{s,t}$. We consider the set  
\[ X_S=\{(F_{s,t},Xy-Yx) \colon (s,t), (x,y) \in PG(1,q), (Xy-Yx) |F_{s,t} \}. \]
Since a generic line does not meet $C$, it follows from the above observation that for each $(x,y) \in PG(1,q)$, there is a unique 
$(s,t) \in PG(1,q)$ such that $(Xy-Yx)$ divides $F_{s,t}$, and hence $|X_S|=(q+1)$.
On the other hand $F_{s,t}$ has $1,2$ or $3$ distinct linear factors over $\bF_q$ according as $F_{s,t} \in \mN_4 \cup \mN_1, \mN_2$ or $\mN_3$. Thus $|X_S|=2|S\cap \mN_2|+3|S\cap \mN_3|+(|S\cap \mN_4|+|S\cap \mN_1|)$. Since $L$ is a generic line, it is not contained in any osculating plane of $C$, and hence $|S \cap \mN_1|=0$. Thus, we obtain the system of equations:
\begin{align*}
2|S\cap \mN_2|+3|S\cap \mN_3|+|S\cap \mN_4|&=q+1, \\
    |S\cap \mN_2|+|S\cap \mN_3|+|S\cap \mN_4|+|S\cap \mN_5|=|S|&=q+1.
\end{align*}
We note that  $\Delta(F_{s,t})=0$ if and only if $F_{s,t}$ has a repeated factor, that is, $ F_{s,t} \in \mN_1 \cup\mN_2 $. Since $F_{s,t} \notin \mN_1$, we get $|S \cap \mN_2|=\mu_L$. By Lemma \ref{cubicr}, $F_{s,t}(T)$ has exactly one linear factor in $\bF_q$ if $\Delta(F_{s,t})$ is a non-square. Therefore, $F_{s,t} \in \mN_4$ if and only if  $f_L(s,t)$ is a non-square. Therefore, $|S \cap \mN_4|=\nu_L$.
We can now solve the above linear system of equations to get 
\begin{align} \label{eq:S_cap_N_i}
|S \cap \mN_1|&=0,\\
\nonumber |S \cap \mN_2|&=\mu_L,\\
\nonumber|S \cap \mN_4|&=\nu_L,\\
\nonumber|S\cap \mN_3|&=\tfrac{(q+1)-2\mu_L-\nu_L}{3}, \\
\nonumber|S\cap \mN_5|&=\tfrac{2(q+1)-\mu_L-2\nu_L}{3}.
\end{align} 
By Theorem \ref{thm_elliptic} we have 
\[ \zeta_{f_L}=(q+1- \mu_L - \nu_L)= \frac{\#\mE_r(\bF_q)- \mu_L}{2},\]
and hence 
\[ \nu_L = q+1 -  \frac{\#\mE_r(\bF_q)+ \mu_L}{2}.\]
Using this in \eqref{eq:S_cap_N_i}, we get 
\begin{align*}
|S \cap \mN_1|&=0,\\
 |S \cap \mN_2|&=\mu_L,\\
|S \cap \mN_4|&=q+1 -  \tfrac{\#\mE_r(\bF_q)+ \mu_L}{2}\\
 |S\cap \mN_3|&=\tfrac{\#\mE_r(\bF_q)-3 \mu_L}{6}, \\
|S\cap \mN_5|&=\tfrac{\#\mE_r(\bF_q)}{3}.
\end{align*}

\section{Point-Line Incidence Numbers} \label{seven}
We begin by recalling the $G$-orbits of points of $PG(3,q)$:
\begin{prop}\cite[Corollary 5(ii), Lemma 21.1.3]{Hirschfeld3} \label{point_orbits}
There are five $G$-orbits for the points of $PG(3,q)$ given by 
\begin{enumerate}
    \item $\mC=G \cdot (0,0,0,1)$ consisting of  the $(q+1)$ points of the twisted cubic $C$.
    \item $\mAx=G \cdot (0,0,1,0)$ consisting of  the $(q+1)$ points of the axis $\mA$.
    \item $\mT=G \cdot (0,0,1,1)$consisting of  the  $(q^2-1)$ points lying on tangent lines of $C$ but not on $C \cup \mA$.
    \item $\mRC=G \cdot (1,0,0,1)$ consisting of  the  $q(q^2-1)/2$ points lying on real chords of $C$ but not on $C$.
    \item $\mIC=G \cdot (1,0,\epsilon,0)$ consisting of  $q(q^2-1)/2$ points of imaginary chords of $C$.    
\end{enumerate}
\end{prop}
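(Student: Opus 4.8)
The plan is to exploit the special geometry of $C$ in characteristic $3$, where the axis $\mA=\langle B_1,B_2\rangle$ controls everything. Two of the five orbits are immediate. First, $C$ is a single orbit, since $G$ acts transitively on $PG(1,q)\cong C$ via $\nu_3$. Second, $\mA$ is a single orbit: it is $G$-invariant by the shape of $g^{[3]}$ in \eqref{eq:g_3} (already observed in \S\ref{two}), and the induced action on $\mA\cong\bP^1$ is again all of $PGL_2(q)$, hence transitive. Both sets have $q+1$ points and are disjoint, because $\nu_3(s,t)\in\mA$ would force $s^3=t^3=0$. So the work is to classify the points of $PG(3,q)$ off $C\cup\mA$.

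The key reduction uses that in characteristic $3$ the osculating plane at $\nu_3(s,t)$ is $\{y_0t^3=y_3s^3\}$. Since $x\mapsto x^3$ is a bijection of $\bF_q$, any point $P=(y_0,y_1,y_2,y_3)$ with $(y_0,y_3)\ne(0,0)$ — that is, $P\notin\mA$ — lies on exactly one osculating plane. Because $G$ is transitive on osculating planes (these correspond bijectively to points of $C$), I may move that plane to the osculating plane at $\nu_3(0,1)$, namely $\{y_0=0\}$, and so assume $P=(0,y_1,y_2,y_3)$; then $P\notin\mA$ forces $y_3\ne0$, and I normalize to $P=(0,y_1,y_2,1)$. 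By \eqref{eq:g_3}, the stabilizer of $\{y_0=0\}$ in $G$ is exactly $H=\{\bbsm a&0\\c&d\besm:a,d\in\bF_q^\times,\ c\in\bF_q\}$, and since $\{y_0=0\}$ is the unique osculating plane through $P$, we have $\mathrm{Stab}_G(P)\subseteq H$. A direct computation with \eqref{eq:g_3} shows $H$ sends $(0,y_1,y_2,1)$ to $(0,\,\alpha^2y_1,\,\alpha(y_2-\gamma y_1),\,1)$ with $\alpha=a/d\in\bF_q^\times$ and $\gamma=c/d\in\bF_q$ free.

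This leaves exactly three cases. If $y_1=y_2=0$ then $P=\nu_3(0,1)\in C$. If $y_1=0$, $y_2\ne0$, then $P\sim(0,0,1,1)$, which lies on the tangent line $\langle B_2,B_3\rangle$ to $C$ at $\nu_3(0,1)$ but not on $C\cup\mA$; call its orbit $\mT$. If $y_1\ne0$, then choosing $\gamma=y_2/y_1$ kills $y_2$, and since $\alpha^2y_1$ runs over the square class of $y_1$, $P$ reduces to $(0,1,0,1)$ if $y_1$ is a nonzero square and to $(0,\ep,0,1)$ if $y_1$ is a nonsquare; a direct substitution shows $(0,u^{-2},0,1)$ lies on the real chord $\langle\nu_3(1,u),\nu_3(1,-u)\rangle$, while $(0,w^{-2},0,1)$ with $w\in\bF_{q^2}\setminus\bF_q$ and $w+w^q=0$ (so $w^2\in\bF_q$ is a nonsquare) lies on the imaginary chord $\langle\nu_3(1,w),\nu_3(1,w^q)\rangle$. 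These are the orbits $\mRC$ and $\mIC$. Finally, computing stabilizers from \eqref{eq:g_3} gives $|\mathrm{Stab}_G(0,0,1,1)|=q$ (all $c$, with $a=d$) and $|\mathrm{Stab}_G(0,1,0,1)|=|\mathrm{Stab}_G(0,\ep,0,1)|=2$ (generated by the nontrivial image of $\mathrm{diag}(-1,1)$ in $PGL_2(q)$), so $|\mT|=q^2-1$ and $|\mRC|=|\mIC|=q(q^2-1)/2$; together with $|\mC|=|\mAx|=q+1$ these sum to $q^3+q^2+q+1=|PG(3,q)|$. Since the five sets are $G$-orbits covering all of $PG(3,q)$ whose sizes add up to $|PG(3,q)|$, they must be pairwise disjoint, and the classification is complete.

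I expect the main obstacle to be the characteristic-$3$ bookkeeping: one must recognize that $\mA$, although disjoint from $C$, lies on every osculating plane and every tangent line, so it has to be isolated as its own orbit rather than absorbed into the secant/tangent classes; and one must check that the real-versus-imaginary-chord split among the normalized points $(0,y_1,0,1)$ is precisely the square-versus-nonsquare split of $y_1$, while carefully noting that $\mathrm{diag}(-1,1)$ is a nontrivial element of $PGL_2(q)$ — which is what forces the orbit sizes $q(q^2-1)/2$ rather than $|G|$. A more synthetic alternative would project $C$ from $P\notin C$ onto a plane and use that the image is an irreducible rational plane cubic, hence has exactly one node or cusp, the three types (a conjugate node, an $\bF_q$-rational node, a cusp) distinguishing $\mIC$, $\mRC$, $\mT$; but this route still needs the same characteristic-$3$ care for the points lying on $\mA$.
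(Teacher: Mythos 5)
Your proposal is correct, and it necessarily differs from the paper, which offers no proof of Proposition \ref{point_orbits} at all but simply cites Hirschfeld (where the classification is carried out for general characteristic). Your argument is a genuinely self-contained, characteristic-$3$-specific alternative: the key observation that every point off the axis $\mA$ lies on a \emph{unique} osculating plane (because $x\mapsto x^3$ is bijective on $\bF_q$) reduces the whole classification to the orbits of the lower-triangular subgroup $H$ on the affine plane $\{(0,y_1,y_2,1)\}$, and your computation of the $H$-action from \eqref{eq:g_3}, the resulting case split on $y_1$ (zero / square / nonsquare), the stabilizer orders $q,2,2$, and the final ``sizes sum to $|PG(3,q)|$, hence the five orbits are pairwise disjoint'' argument are all correct. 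This fits well with the paper's own char-$3$ toolkit and is arguably simpler than the general-characteristic treatment being cited.

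One small completion is needed to obtain the proposition exactly as stated. You prove the inclusions $\mT\subseteq\{$points on tangents off $C\cup\mA\}$, $\mRC\subseteq\{$points on real chords off $C\}$, $\mIC\subseteq\{$points on imaginary chords$\}$, but not the reverse inclusions, and your representatives $(0,1,0,1)$, $(0,\ep,0,1)$ are not the paper's $(1,0,0,1)$, $(1,0,\ep,0)$. Both gaps close with one standard remark you leave implicit: two distinct chords or tangent lines of $C$ cannot meet at a point off $C$, since the plane they would span would meet $C$ with multiplicity at least $4>3$; moreover distinct tangents meet $\mA$ at distinct points, so the tangents are pairwise disjoint as well. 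Counting then gives exactly $(q+1)(q-1)$, $\tfrac{q(q+1)}{2}(q-1)$ and $\tfrac{q^2-q}{2}(q+1)$ points in the three geometric sets, matching your orbit sizes and forcing equality; in particular $(1,0,0,1)$, which lies on the real chord $\langle\nu_3(1,0),\nu_3(0,1)\rangle$ off $C$, lies in $\mRC$, and $(1,0,\ep,0)$, which lies on the imaginary chord through $\nu_3(1,\pm\sqrt\ep)$, lies in $\mIC$. With that sentence added, your proof fully recovers the statement.
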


% Note that in Section \ref{two}, we used the notation $\mA$ to denote the axis in $PG(3,q)$. Since the points on the axis form a single $G$-orbit, we abuse notation slightly by also using $\mA$ to refer to this point orbit. 

\begin{lem}\label{tangential}
Given a point $w=\sum_{i=0}^3 y_i B_i\in D_3V$, we consider the quadratic form $P_w(X,Y) = (y_2^2-y_1y_3)X^2 + (y_1y_2-y_0y_3)XY + (y_1^2-y_0y_2) Y^2 \in \text{Sym}^2(V^*)$. We have $P_w(X,Y)$ is the zero polynomial if and only if $w \in C$, and for $w \notin C$:
\begin{enumerate}
    \item $w \in \mT \cup \mAx$ if and only if $P_w$ is the square of a linear form,
    \item $w \in \mRC$ if and only if $P_w$ is a product of two independent linear forms over $\bF_q$,
    \item $w \in \mIC$ if and only if $P_w$ is irreducible over $\bF_q$.
\end{enumerate}
  
\end{lem}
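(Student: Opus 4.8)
## Proof Proposal for Lemma \ref{tangential}

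The plan is to reduce everything to a normal form by exploiting the $G$-equivariance of the construction $w \mapsto P_w$. First I would verify that $P_w$ transforms correctly under $G=PGL_2(q)$: if $g \in GL_2(F)$ acts on $D_3V$ by the matrix $g^{[3]}$ of \eqref{eq:g_3}, then the triple $(y_2^2-y_1y_3,\ y_1y_2-y_0y_3,\ y_1^2-y_0y_2)$ of coefficients of $P_w$ should transform (up to a scalar) exactly like a binary quadratic form under $g_2$, i.e. $P_{g^{[3]}w}(X,Y) = \det(g)^{k} P_w(dX-bY,aY-cX)$ for an appropriate power $k$. This is the key structural fact; it says the three quantities $y_2^2-y_1y_3$, $y_1y_2-y_0y_3$, $y_1^2-y_0y_2$ are (up to twist) the coordinates of the image of $w$ under a $G$-equivariant map $\bP(D_3V) \dashrightarrow \bP(\mathrm{Sym}^2(V^*))$. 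Geometrically this map sends a point $w$ to the pair of points of $C(\overline{F})$ whose tangent lines pass through $w$ (the ``polar conic'' of $w$ with respect to $C$), which makes the statement transparent once set up. I expect the verification of this equivariance to be the main computational obstacle, though it is a finite check using \eqref{eq:g_3}.

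Granting equivariance, the proof proceeds orbit by orbit using the representatives from Proposition \ref{point_orbits}. For $w = (0,0,0,1) \in \mC$ one computes directly $y_2^2-y_1y_3 = y_1y_2-y_0y_3 = y_1^2-y_0y_2 = 0$, so $P_w$ is the zero polynomial; by equivariance $P_w \equiv 0$ for every $w$ in the $G$-orbit $\mC$, and conversely if $P_w\equiv 0$ the three $2\times 2$ minors of $\bigl(\begin{smallmatrix} y_0 & y_1 & y_2 \\ y_1 & y_2 & y_3\end{smallmatrix}\bigr)$ vanish, forcing $(y_0,y_1,y_2,y_3)=(s^3,s^2t,st^2,t^3)$ for some $(s,t)$, i.e. $w \in C$. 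For the remaining cases I would evaluate $P_w$ at the four representatives: $w=(0,0,1,0)\in \mAx$ gives $P_w = X^2$ (a perfect square); $w=(0,0,1,1)\in\mT$ gives $P_w = X^2$ as well (a perfect square); $w=(1,0,0,1)\in\mRC$ gives coefficients $(0,0,-1)$, so $P_w = -Y^2$ — wait, that degenerates, so I would instead take the stated representative more carefully or use $w=(1,0,0,1)$ and recheck: $(y_2^2-y_1y_3, y_1y_2-y_0y_3, y_1^2-y_0y_2) = (0, 0, -1)$, giving $P_w=-Y^2$, a perfect square, which would be wrong — so the representative must be handled via a generic point of $\mRC$, e.g. a point on the chord joining $\nu_3(1,0)$ and $\nu_3(0,1)$ other than those two points, which is $(1,0,0,\mu)$ with $\mu \neq 0$; then $(y_2^2-y_1y_3,\,y_1y_2-y_0y_3,\,y_1^2-y_0y_2)=(0,\,-\mu,\,0)$, so $P_w = -\mu XY$, a product of two independent linear forms over $\bF_q$. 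Similarly a generic point of $\mIC$ lies on the imaginary chord joining $\nu_3(\alpha)$ and $\nu_3(\alpha^q)$ with $\alpha \in \bF_{q^2}\setminus\bF_q$, and there $P_w$ should come out proportional to the irreducible quadratic $(Y-\alpha X)(Y-\alpha^q X)$ or its conjugate.

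The cleanest way to organize the chord cases is the following: if $w$ lies on the chord (real or imaginary) through $\nu_3(x_1,y_1)$ and $\nu_3(x_2,y_2)$ but $w \notin C$, then $w = \nu_3(x_1,y_1) + \mu\, \nu_3(x_2,y_2)$ for some $\mu \neq 0$, and a direct expansion shows $P_w(X,Y) = c\,\mu\,(y_1X-x_1Y)(y_2X-x_2Y)$ for a nonzero constant $c$ depending only on $(x_i,y_i)$. This single identity then covers $\mRC$ (when $\{(x_1,y_1),(x_2,y_2)\}$ are distinct $\bF_q$-points, giving two independent $\bF_q$-linear factors) and $\mIC$ (when they are Galois-conjugate over $\bF_{q^2}$, giving an irreducible $\bF_q$-quadratic). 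For $\mT$, a point on the tangent line at $\nu_3(x_0,y_0)$ other than $\nu_3(x_0,y_0)$ itself is $w = \nu_3(x_0,y_0) + \mu\,\nu_3'(x_0,y_0)$ (derivative of the Veronese parametrization), $\mu \neq 0$, and expansion gives $P_w = c'\,\mu^2\,(y_0X-x_0Y)^2$, a nonzero perfect square; the axis point $w=(0,0,1,0)$ gives $P_w=X^2$ directly. Finally, since the five cases ``$P_w\equiv 0$'', ``$P_w$ a nonzero square'', ``$P_w$ a product of two distinct $\bF_q$-linear forms'', ``$P_w$ irreducible over $\bF_q$'' are mutually exclusive and exhaust all possibilities for a binary quadratic form, and since the five point orbits $\mC,\mAx\cup\mT,\mRC,\mIC$ (after merging $\mAx$ and $\mT$ as the statement does) exhaust $PG(3,q)$, matching each orbit to its case via the representatives above completes the proof. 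The main obstacle remains the equivariance computation; everything after that is bookkeeping with explicit representatives.
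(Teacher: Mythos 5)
Your proposal follows essentially the same route as the paper's proof: establish the $G$-equivariance of $w \mapsto P_w$ (the paper verifies it on generators of $GL_2(q)$ using \eqref{eq:g_3}), identify the vanishing of $P_w$ with the defining quadrics of $C$, and then evaluate at orbit representatives (the paper uses $(0,0,1,0)$, $(0,0,1,1)$, $(1,0,0,1)$, $(1,0,\ep,0)$), with your uniform chord/tangent computation being only a cosmetic variant. The one hiccup is an arithmetic slip: for $w=(1,0,0,1)\in\mRC$ the coefficient triple is $(0,-1,0)$, so $P_w=-XY$ (not $-Y^2$), i.e.\ the standard representative works fine --- as your own general computation $(1,0,0,\mu)\mapsto -\mu XY$ already shows at $\mu=1$.
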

\begin{proof}
    First we show that  $P_w$  has the property that   $ g \cdot P_w = \det(g)^{-4} P_{g \cdot w}$ for for $g =\bbsm a & b\\ c& d \besm \in GL_2(q)$. Here $g$ acts on the coordinates of $w$ by the  matrix $g^{[3]}$ (see \eqref{eq:g_3}) and on $P_w(X,Y)$ by $g \cdot P = \det(g)^{-2} P_w(dX-bY, aY-cX)$ as defined in \S \ref{two}.  Since $GL_2(q)$ is generated by the matrices of the form  $\bbsm 1 & 0 \\ 0 & d \besm, \bbsm 0 & 1\\ 1 & 0 \besm$, and $\bbsm 1 & 0 \\ c & 1 \besm$, it is enough  to prove the property only for such matrices. For $g=\bbsm 1 & 0 \\ 0 & d \besm$, we have then $g^{[3]}(y_0,y_2,y_2,y_3)= (y_0,dy_1,d^2y_2,d^3y_3)$, and hence  $P_{ g \cdot w}=d^2P_w(dX,Y) = d^4  g \cdot P_{w}$.  For  $g=\bbsm 0 & 1 \\ 1 & 0 \besm$, we have  $g^{[3]}(y_0,y_1,y_2,y_3)=(y_3,y_2,y_1,y_0)$, and hence  $P_{ g\cdot w} = P_w(Y,X) = g\cdot P_w$.  For $g=\bbsm 1 & 0 \\ c & 1 \besm$, we have $g^{[3]}(y_0,y_1,y_2,y_3)=(y_0,cy_0+y_1,c^2y_0-cy_1+y_2,c^3y_0+y_3)$, and it is readily checked that $P_{ g \cdot w} = g \cdot P_w$.
    
Since $C$ is defined by the quadrics $Y_0Y_2-Y_1^2, Y_0Y_3-Y_1Y_2$ and $Y_1Y_3-Y_2^2$, we see that $P_w=0$ if and only if $w \in C$. For $w \notin C$, we note that for the representatives 
$(0,0,1,0)$ and  $(0,0,1,1)$ of $\mAx$ and $\mT$, we have $P_w$ equals $X^2$, where as for the representatives  $(1,0,0,1)$ and $(1,0,\epsilon,0)$ of $\mRC$ and $\mIC$, we have $P_w$ equals $-XY$ and $\ep( \ep X^2-Y^2)$ respectively. Thus $P_w$ has $0, 1$ or  $2$ independent  linear factors over $\bF_q$ if and only if $w\in \mIC, \mT \cup \mAx$ or $\mRC$ respectively.
\end{proof}

% \begin{rem}
%    Since $\bT$ is a homogeneous polynomial of degree $4$, it does not depend on which vector representative we take for a point $P$ of $PG(3,q)$. 
% \end{rem}

% \begin{rem}
%     The quartic surface given by $\bT=0$ is the tangential variety of the twisted cubic $C$, so it contains all the points of the orbits $\mC, \mAx$ and $\mT$.
% \end{rem}The points of a line $L$ joining two points  $(x_0,x_1,x_2,x_3)$ and  $(y_0,y_1,y_2,y_3)$ of $PG(3,q)$ can be given by $\{(sx_0+ty_0,sx_1+ty_1,sx_2+ty_2,sx_3+ty_3) \colon (s,t)\in PG(1,q)\}$.
In the next lemma, we  use Lemma \ref{tangential}, to calculate the numbers $|\mP\cap \mM|$, where $\mP$ denotes the set of $(q+1)$ points of a non-generic line $L$ and $\mM$ denotes one of the orbit $\{\mC,\mAx,\mT,\mRC,\mIC\}$.

\begin{lem}
  Let $L$ be a non-generic line and let $\mP$ denote the points of $L$. For $L\in \{\mO_7,\mO_2,\mO_4,\mO_{8.1}^+,\mO_{8.1}^-,\mO_{8.2},\mO_1,\mO_3,\mO_5^+,\mO_5^-\}$, the numbers $|\mP\cap \mM|$, for $\mM\in \{\mC,\mAx,\mT,\mRC,\mIC\}$ can be given by the following table:
\begin{table}[h] 
\begin{tabular}{c| *{5}{c}}
Line orbits & $|\mP \cap \mC|$ & $|\mP \cap\mAx|$ & $|\mP \cap \mT|$ & $|\mP \cap \mRC|$ & $|\mP \cap \mIC|$ \\
\hline
$\mO_7$ & $0$ & $q+1$ & $0$ & $0$ & $0$ \\
$\mO_2$ & $1$ & $1$ & $q-1$ & $0$ & $0$ \\
$\mO_4$ & $1$ & $1$ & $0$ & $\tfrac{q-1}{2}$ & $\tfrac{q-1}{2}$ \\
$\mO_{8.1}^+$ & $0$ & $1$ & $0$ & $q$ & $0$ \\
$\mO_{8.1}^-$ & $0$ & $1$ & $0$ & $0$ & $q$ \\
$\mO_{8.2}$ & $0$ & $1$ & $1$ & $\tfrac{q-1}{2}$ & $\tfrac{q-1}{2}$\\
$\mO_1$ & $2$ & $0$ & $0$ & $q-1$ & $0$ \\
$\mO_3$ & $0$ & $0$ & $0$ & $0$ & $q+1$ \\
$\mO_5^-$ & $1$ & $0$ & $0$ & $\tfrac{q-1}{2}$ & $\tfrac{q+1}{2}$\\
$\mO_5^+$ & $1$ & $0$ & $2$ & $\tfrac{q-3}{2}$ & $\tfrac{q+1}{2}$\\
\end{tabular}
\end{table}
  \end{lem}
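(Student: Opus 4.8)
The plan is to use Lemma \ref{tangential} as the main computational tool: for each of the ten non-generic line orbits, we have an explicit representative line $L$ given by its Plücker coordinates $(z_0,\dots,z_5)$, and we parametrize the $(q+1)$ points $w$ of $L$ by a projective parameter. For each such $w = \sum y_i B_i$ we form the quadratic form $P_w(X,Y)$, and by Lemma \ref{tangential} the point $w$ lies in $\mC$ iff $P_w \equiv 0$, in $\mT \cup \mAx$ iff $P_w$ is a perfect square of a linear form, in $\mRC$ iff $P_w$ splits into two distinct linear factors over $\bF_q$, and in $\mIC$ iff $P_w$ is irreducible. To separate $\mT$ from $\mAx$ in the perfect-square case, one notes that $\mAx$ is the single line $E_5$, so $w \in \mAx$ iff $w$ has coordinates $(0,\ast,\ast,0)$ on that line, i.e. $y_0 = y_3 = 0$; all remaining perfect-square points are in $\mT$.

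First I would fix, for each orbit $\mO_k$, the representative line from \S\ref{four}: e.g. $\mO_7 = \mA$ itself (all points in $\mAx$, giving the first row immediately), $\mO_2 = G\cdot(1,0,0,0,0,0)$ which is the tangent at $\nu_3(0,1)$ spanned by $B_0$ and $B_1$, $\mO_4 = G\cdot(0,1,0,0,0,0)$ spanned by $B_0$ and $B_2$, and so on, reading off two spanning points of $PG(3,q)$ from the Plücker coordinates exactly as done for the pencils of planes. Then a generic point of $L$ is $w = w_1 + \mu w_2$ with $\mu \in \bF_q \cup \{\infty\}$, and $P_w(X,Y)$ becomes a quadratic form whose coefficients are (at most quadratic) polynomials in $\mu$. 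Counting how many values of $\mu$ give $P_w \equiv 0$, a perfect square, a split form, or an irreducible form reduces in each case to a count already carried out elsewhere in the paper — typically the number of $\lambda \in \bF_q$ for which a given quadratic or cubic in $\lambda$ is a square/non-square, or the number of roots of $y^3 - y^2 + \lambda$, which is governed by Lemma \ref{Ji_lem} and the partition $\bF_q^\times = J_4 \cup J_2 \cup J_1$, or by simpler square/non-square counts for the chord orbits $\mO_1,\mO_3,\mO_5^{\pm}$.

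Concretely, for $\mO_{8.1}^{\pm}$ and $\mO_{8.2}$ the line lies in the osculating plane at $\nu_3(0,1)$, so $y_0$ is free while $(y_1,y_2,y_3)$ are determined, and $P_w$ is visibly a perfect square or reduces to analyzing the same cubic $y^3 + \epsilon^{j} y - \lambda$ or $y^3-y^2+\lambda$ already studied in Proposition \ref{nongenericresult3}; this reuses the $q/3$, $2q/3$, $q$ and $|J_i|$ counts verbatim. For $\mO_1$ (a real chord), $w$ ranges over a line meeting $C$ in two rational points, so $P_w \equiv 0$ at those two values of $\mu$ and splits over $\bF_q$ at the remaining $q-1$; for $\mO_3$ (an imaginary chord) $P_w$ is irreducible at all $q+1$ points; for $\mO_5^{\pm}$ one combines a single point of $C$ with the behaviour of a quadratic discriminant, whose square/non-square split gives the $(q-3)/2$, $(q-1)/2$, $(q+1)/2$ entries. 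The arithmetic is entirely routine once the representative is fixed.

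The main obstacle is bookkeeping rather than conceptual: one must correctly set up the parametrization of each representative line in $PG(3,q)$ (getting the right two spanning points from the Plücker vector, with the correct sign conventions from the alternating matrix in Lemma \ref{line_plane}), and then carefully track the degenerate value $\mu = \infty$ and the boundary between $\mT$ and $\mAx$. The only genuinely non-mechanical step is recognizing, for the orbits $\mO_4$, $\mO_{8.2}$ and $\mO_5^{\pm}$, that the relevant count of squares among the values of a quadratic polynomial — or of roots of $y^3-y^2+\lambda$ — matches the sizes $|J_1|, |J_2|, |J_4|$ and the elementary square/non-square tallies, which the paper has already established; invoking those prior results is what makes each table entry fall out cleanly.
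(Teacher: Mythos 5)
Your proposal is correct and follows essentially the same route as the paper: for each orbit one fixes an explicit representative line spanned by two points of $PG(3,q)$, parametrizes its $q+1$ points $w(s)$, and classifies each point by the factorization type of $P_{w(s)}$ via Lemma \ref{tangential}, with the $\mT$/$\mAx$ distinction settled exactly as you describe (points of $\mAx$ are those with $y_0=y_3=0$). The only detail to adjust is your anticipation of which auxiliary counts arise: the cubic-root counts ($q/3$, $2q/3$, $|J_i|$) you cite belong to the plane-line computation of Proposition \ref{nongenericresult3}, whereas in the point-line computation the discriminant of $P_{w(s)}$ leads only to elementary square/non-square counts in the parameter $s$ (for instance $P_{w(s)}$ always splits on $\mO_{8.1}^+$, is always irreducible on $\mO_{8.1}^-$, and gives a square/non-square dichotomy on $\mO_4$, $\mO_{8.2}$ and $\mO_5^{\pm}$), which is why those rows of the table contain $q$, $q$ and halves of $q\pm 1$ or $q-3$ rather than thirds.
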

%   \begin{enumerate}
%       \item If $L$ is the axis, then $|\mP\cap \mC|=0,|\mP\cap \mA|=q+1,|\mP\cap \mT|=|\mP\cap \mRC|=|\mP\cap \mIC|=0$.
%       \item If $L\in \mO_2$, then $|\mP\cap \mC|=1,|\mP\cap \mA|=1,|\mP\cap \mT|=q-1,|\mP\cap \mRC|=0$ and $|\mP\cap \mIC|=0$.
%       \item If $L\in \mO_4$, then $|\mP\cap \mC|=1,|\mP\cap \mA|=1,|\mP\cap \mT|=0,|\mP\cap \mRC|=\tfrac{q-1}{2}$ and $|\mP\cap \mIC|=\tfrac{q-1}{2}$.
%       \item If $L\in \mO_{8.1}^+$, then $|\mP\cap \mC|=0,|\mP\cap \mA|=1,|\mP\cap \mT|=0,|\mP\cap \mRC|=q$ and $|\mP\cap \mIC|=0$.
%       \item If $L\in \mO_{8.1}^-$, then $|\mP\cap \mC|=0,|\mP\cap \mA|=1,|\mP\cap \mT|=0,|\mP\cap \mRC|=0$ and $|\mP\cap \mIC|=q$.
%       \item If $L\in \mO_{8.2}$, then $|\mP\cap \mC|=0,|\mP\cap \mA|=1,|\mP\cap \mT|=1,|\mP\cap \mRC|=\tfrac{q-1}{2}$ and $|\mP\cap \mIC|=\tfrac{q-1}{2}$
%   \end{enumerate}
% \end{lem}

\begin{proof}
(1) If $L\in \mO_7$ is the axis then it all its $(q+1)$ points are in $\mAx$.

(2) If  $L\in \mO_2$ is a tangent line, then it has one point each in $\mC$ and $\mAx$ and the remaining $(q-1)$ points are in $\mT$.

(3) If  $L\in \mO_4$ be a non-tangent unisecant meeting the axis. Let $L$ be the joining of the two points $(0,0,0,1)$ and $(0,1,0,0)$. Apart from these $2$ points which lie on $\mC$ and $\mAx$, respectively, the remaining $(q-1)$ points of this line are $\{ w(s)=(0,s,0,1) \colon s\in \bF_q^\times\}$ which has  $(q-1)/2$ points each in $\mRC$ and $\mIC$, because $P_{w(s)}=s(s Y^2 - X^2)$.

(4) If $L \in \mO_{8.1}^+$ is an external line lying on an osculating plane, represented by the line joining  $(0,0,1,0)$ and $(0,1,0,1)$.  Apart from $(0,0,1,0) \in \mAx$, the remaining $q$ points of this line are  $\{ w(s)=(0,1,s,1) \colon s \in \bF_q\}$. These $q$ points are in $\mRC$ because  $P_{w(s)}= (Y-(s+1)X) (Y-(s-1) X)$.

(5) If  $L\in \mO_{8.1}^-$ is  an external line lying on an osculating plane, we can take  $L$ to be the line  joining of the two points $(0,0,1,0)$ and $(0,\ep,0,1)$.  Apart from $(0,0,1,0) \in \mAx$, the remaining $q$ points of this line are  
$\{ w(s)= (0,\ep,s,1) \colon s\in \bF_q\}$.
These $q$ points are in $\mIC$ because 
$P_{w(s)}=(\ep  Y- s X)^2 - \ep X^2$ is irreducible over $\bF_q$.

(6) If  $L\in \mO_{8.2}$ is an external line lying on an osculating plane, we can take $L$ to be the line  joining of the two points $(0,0,1,1)$ and $(0,1,0,0)$. Apart from $(0,1,0,0) \in \mAx$ and $(0,0,1,1) \in \mT$,  the remaining $(q-1)$ points of this line are  
$\{ w(s)= (0,1,s,s) \colon s\in \bF_q^\times\}$. Of these $(q-1)$ points, there are $(q-1)/2$ points each in $\mRC$ and $\mIC$ because 
$P_{w(s)}=
(s^2-s)X^2 + sXY +  Y^2 = (Y-sX)^2 -s X^2$.

(7) If  $L\in \mO_1$ is a real chord of $C$, we  can take $L$ to be the line  joining of the two points $(1,0,0,0)$ and $(0,0,0,1)$. Apart from $(1,0,0,0), (0,0,0,1)  \in \mC$,  the remaining $(q-1)$ points of this line are  
$\{ w(s)= (1,0,0,s) \colon s\in \bF_q^\times\}$. These $(q-1)$ points are in $\mRC$ because $P_{w(s)}=-sXY$ is a product of two independent linear factors.

(8) If $L\in \mO_3$ is an imaginary chord not,
 we can take $L$ to be the line  joining of the two points $(1,0,\ep,0)$ and $(0,1,0,\ep)$. The $(q+1)$ points of this line are  
$\{ w(s)= (1,s,\ep,s\ep) \colon s\in \bF_q\cup \{\infty\}\}$. These $(q+1)$ points are in $\mIC$ because $P_{w(s)}=(s^2-\ep)(Y^2-\ep X^2)$ is irreducible over $\bF_q$.

(9)  Let $L\in \mO_5^-$ be a non-tangent unisecant not lying on an osculating plane. We can take $L$ to be the line  joining of the two points $(1,0,0,0)$ and $(0,1,0,-\ep)$.  We can see that only the point $(1,0,0,0)$ of $L$ is in $\mC$. Apart from this point the remaining $q$ points of this line are $\{ w(s)= (s,1,0,-\ep) \colon s\in \bF_q\}$. Here $P_{w(s)}=\ep X^2+\ep sXY+Y^2=(Y-sX)^2-(s^2-\ep) X^2$, which is irreducible for $(q+1)/2$ values of $s$ and a product of two independent linear forms for $(q-1)/2$ values of $s$.\\

(10) If  $L\in \mO_5^+$ is a non-tangent unisecant not lying on an osculating plane,  we can take $L$ to be the line  joining of the two points $(1,0,0,0)$ and $(0,0,1,-1)$. These $2$ points are in $\mC$ and $\mT$, respectively, and the remaining $(q-1)$  points of this line are are $\{ w(s)= (1,0,s,-s) \colon s\in \bF_q^\times\}$. Here
$P_{w(s)}=s((s+1)X^2-(X+Y)^2)$ which is a square of a linear form for $s=-1$, a product of two independent linear forms  for $(q-3)/2$ values of $s$, and irreducible  for $(q+1)/2$ values of $s$.
 Since $L$ contains no points of $\mAx$, the point  $(-1,0,1,-1)$ for $s=-1$ is in $\mT$. 
\end{proof}

\begin{lem}\label{intersection}
    Let $L$ be a line with Pl\"ucker coordinates $(p_{01},p_{02},p_{03},p_{12},p_{13},p_{23})$, which does not meet the axis. Then the point of intersection of $L$ with the  osculating plane $O_{(s,t)}$ is given by $(p_{03}s^3,p_{13}s^3+p_{01}t^3,p_{23}s^3+p_{02}t^3,p_{03}t^3)$.
\end{lem}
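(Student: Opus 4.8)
The plan is to use the classical formula expressing, through Plücker coordinates, the point in which a line meets a plane. First I would pick two points $P=(P_0,\dots,P_3)$ and $Q=(Q_0,\dots,Q_3)$ of $\bP(D_3V)$ spanning $L$, so that $p_{ij}=P_iQ_j-P_jQ_i$. For a plane $\pi$ with equation $\sum_i a_i y_i=0$ that does not contain $L$, the intersection point $L\cap\pi$ is $(\pi\cdot Q)\,P-(\pi\cdot P)\,Q$, where $\pi\cdot P=\sum_i a_iP_i$; its $k$-th coordinate is
\[
\sum_i a_i\,(Q_iP_k-P_iQ_k)=\sum_i a_i\,p_{ik},
\]
using $p_{ii}=0$ and the antisymmetry $p_{ik}=-p_{ki}$.

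Next I would specialize to the osculating plane. Since $\mathrm{char}(F)=3$, the plane $O_{(s,t)}$ has equation $t^3y_0-s^3y_3=0$, so $(a_0,a_1,a_2,a_3)=(t^3,0,0,-s^3)$. Substituting into $\sum_i a_i p_{ik}$ and reading off $k=0,1,2,3$ (with the sign bookkeeping $p_{3k}=-p_{k3}$) gives the coordinates $t^3p_{0k}-s^3p_{3k}$, i.e.
\[
(s^3p_{03},\ t^3p_{01}+s^3p_{13},\ t^3p_{02}+s^3p_{23},\ t^3p_{03}),
\]
which is exactly the asserted formula. This is a short and entirely routine computation.

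The only point that needs genuine care — and the closest thing to an obstacle — is verifying that this recipe produces a well-defined point of $L$: namely that the displayed vector is nonzero and that $L\not\subseteq O_{(s,t)}$. For the first, I would invoke the hypothesis that $L$ does not meet the axis $\mA$: as recorded in Section \ref{two}, the lines meeting $\mA$ are precisely those with $z_2=p_{03}=0$, so here $p_{03}\neq 0$, and since $(s,t)\neq(0,0)$ the first and last coordinates $s^3p_{03}$ and $t^3p_{03}$ cannot both vanish. For the second, since $\mA\subseteq O_{(s,t)}$ for every $(s,t)$, a line contained in some $O_{(s,t)}$ would be coplanar with $\mA$ and hence would meet $\mA$; thus $L\not\subseteq O_{(s,t)}$, so $L\cap O_{(s,t)}$ is a single point, and it is the one computed above.
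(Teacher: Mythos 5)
Your proposal is correct and amounts to the same routine Pl\"ucker-coordinate computation as the paper: the identity that the meet of $L$ with the plane $\sum_i a_i y_i=0$ has coordinates $\sum_i a_i p_{ik}$ is exactly the content of the line-incidence matrix condition from Hirschfeld's Lemma 15.2.3 that the paper verifies the candidate point against, and your specialization to $(a_0,a_1,a_2,a_3)=(t^3,0,0,-s^3)$ reproduces the stated point (any overall sign from the $p_{ij}$ convention is projectively immaterial). A small bonus of your write-up is that you explicitly justify the well-definedness (nonvanishing via $p_{03}\neq 0$) and that $L\not\subseteq O_{(s,t)}$, facts the paper asserts using the earlier equivalence between meeting $\mA$ and lying in an osculating plane.
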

\begin{proof}
Since $L$ is not contained in any osculating plane of $C$, it meets each osculating plane of $C$ in a unique point. A point $(y_0,y_1,y_2, y_3)$ lies on the  osculating plane $O_{(s,t)}$ if and only if 
$t^3y_0-s^3y_3=0$. The condition for a point  $(y_0, y_1, y_2,y_3)$ to lie on the line  $L$
is     \[ (y_0,y_1,y_2, y_3) \bbm 0 & -p_{23} & p_{13} & -p_{12} \\
p_{23} & 0 & -p_{03} & p_{02} \\
-p_{13} & p_{03} & 0 & -p_{01} \\
p_{12} & -p_{02} & p_{01} & 0 
\bem=0,\] by \cite[Lemma 15.2.3]{Hirschfeld3}. Both these conditions are satisfied by $P=(p_{03}s^3,p_{13}s^3+p_{01}t^3,p_{23}s^3+p_{02}t^3,p_{03}t^3)$, and hence $P$ is the point of intersection of  $L$ and $O_{(s,t)}$.
\end{proof}

\subsection{Proof of Theorem \ref{result4}}
Let $L$ be a generic line and $f_L$ be the binary quartic form associated with $L$, with $z_2(f_L)=1$ and $\jmath(f_L)=r$. Let  $\eta_L$ denote the number of distinct linear factors of $f_L$ over $\bF_q$:
\[ \eta_L = \begin{cases} i &\text{ if $f_L \in \mF_i$ for $i \in \{1,2,4\}$,}\\
0 &\text{if $f_L \in \mF_2' \cup \mF_4'$}. \end{cases} \]
We note that we may rewrite the assertion of Theorem \ref{result4} as
\begin{align*}
|\mP \cap \mC|&=0,\\
|\mP \cap \mAx|&=0,\\
 |\mP \cap \mT|&=\eta_L,\\
|\mP \cap \mRC|&=\tfrac{\#\mE_r(\bF_q)-\eta_L}{2},\\
 |\mP\cap \mIC|&=q+1-\tfrac{\#\mE_r(\bF_q)+\eta_L}{2}.
\end{align*} 

We will prove this equivalent assertion. The first two equalities of the above assertion follow immediately from the fact that $L$ intersects neither $C$ nor the axis $\mA$. Let
\[ \zeta_L=\#\{ (s,t) \in PG(1,q) \colon  f_L(s,t) \text{ is a non-zero square} \}.\]

By Lemma \ref{intersection}, the line $L$ with Pl\"ucker coordinates  $p_{ij}$, $0\leq i<j\leq 3$,  meets the osculating plane $O_{(s,t)}$ in the unique point $w(s,t)=(p_{03}s^3,p_{13}s^3+p_{01}t^3,p_{23}s^3+p_{02}t^3,p_{03}t^3)$. Any two osculating planes meet in $\mA$, and $L$ does not meet $\mA$. Therefore, the set of $(q+1)$ points $\mP$ of $L$  is precisely $\{w(s,t) \colon (s,t) \in PG(1,q) \}$. Now $P_{w(s,t)}(X,Y)=(y_2^2-y_1y_3)X^2 + (y_1y_2-y_0y_3)XY + (y_1^2-y_0y_2) Y^2$, where $(y_0,y_1,y_2,y_3)=(p_{03}s^3,p_{13}s^3+p_{01}t^3,p_{23}s^3+p_{02}t^3,p_{03}t^3)$.

By lemma \ref{tangential}, the point $w(s,t)$ will be in $ \mT,\mRC$ or $ \mIC$ according as the discriminant $\Delta(P_{w(s,t)})$ of the quadratic form $P_{w(s,t)}$ is $0$, a nonzero square or a non-square, respectively. We calculate this discriminant 
\begin{align*}
 \Delta(P_{w(s,t)}) &=(y_1y_2-y_0y_3)^2-(y_2^2-y_1y_3)(y_1^2-y_0y_2)\\
 &=y_0^2y_3^2+y_0y_2^3+y_1^3y_3\\
 &=(p_{03}s^3)^2(p_{03}t^3)^2+p_{03}s^3(p_{23}s^3+p_{02}t^3)^3 +(p_{13}s^3+p_{01}t^3)^3p_{03}t^3\\
 &=p_{03}^2s^6t^6+p_{03}p_{23}^3s^{12}+p_{03}p_{02}^3s^3t^9+p_{03}p_{13}^3s^9t^3+p_{03}p_{01}^3t^{12}\\
 &=p_{03}(p_{23}^3s^{12}+p_{13}^3s^9t^3+p_{03}^3s^6t^6+p_{02}^3s^3t^9+p_{01}^3t^{12}) \\
 &=p_{03}(p_{23}s^4+p_{13}s^3t+p_{03}s^2t^2+p_{02}st^3+p_{01}t^4)^3\\
 &=(z_0s^4+z_1s^3t+s^2t^2+z_3st^3+z_4t^4)^3\\
 &= f_L(s,t)^3.
\end{align*}
Therefore, 
\begin{enumerate}
    \item[(i)] $|\mP\cap \mT|=\eta_L$,
    \item[(ii)]  $|\mP\cap \mRC|= \zeta_L$,
    \item[(iii)] $|\mP\cap \mIC|=(q+1)-\eta_L-\zeta_L$.
\end{enumerate}
By Theorem \ref{thm_elliptic}, we also have
\[ \zeta_L= \zeta_{f_L} =\tfrac{\#\mE_r(\bF_q)-\eta_L}{2}.\] 
Hence, 
\begin{align*}
|\mP\cap \mRC|&=\tfrac{\#\mE_r(\bF_q)-\eta_L}{2},\\
    |\mP\cap \mIC|&=q+1-\tfrac{\#\mE_r(\bF_q)+\eta_L}{2}.
\end{align*}

\bibliographystyle{amsplain}

\begin{thebibliography}{10}

\bibitem{BPS}
Aart Blokhuis, Ruud Pellikaan, and Tam\'{a}s S\H{o}nyi, \emph{The extended coset leader weight enumerator of a twisted cubic code}, Des. Codes Cryptogr. 90 (2022), no. 9, 2223–2247.

\bibitem{Ceria_Pavese}
Michela Ceria and Francesco Pavese, \emph{On the geometry of a $(q + 1)$-arc of $PG(3, q)$, $q$ even}, Discrete Math. 346 (2023), no. 12, Paper No. 113594, 20.

\bibitem{DMP3}
Alexander A. Davydov, Stefano Marcugini, and Fernanda Pambianco, \emph{Twisted cubic and point-line incidence matrix in $PG(3, q)$}, Des. Codes Cryptogr. 89 (2021), no. 10, 2211–2233.

\bibitem{DMP5}
\bysame, \emph{Twisted cubic and plane-line incidence matrix in $PG(3, q)$}, J. Geom. 113 (2022), no. 2,
Paper No. 29, 29.

\bibitem{DMP1}
\bysame, \emph{Orbits of lines for a twisted cubic in $PG(3, q)$}, Mediterr. J. Math. 20 (2023), no. 3, Paper No. 132, 21.

\bibitem{GL}
G\"{u}lar G\"{u}nay and Michel Lavrauw, \emph{On pencils of cubics on the projective line over finite fields of characteristic $> 3$}, Finite Fields Appl. 78 (2022), Paper No. 101960, 28.

\bibitem{Hirschfeld3}
 J. W. P. Hirschfeld, \emph{Finite projective spaces of three dimensions}, Oxford Mathematical Monographs, The Clarendon Press, Oxford University Press, New York, 1985, Oxford Science
Publications.

\bibitem{Hirschfeld2}
\bysame, \emph{Projective geometries over finite fields}, second ed., Oxford Mathematical Monographs, The Clarendon Press, Oxford University Press, New York, 1998.

\bibitem{KPP}
Krishna Kaipa, Nupur Patanker, and Puspendu Pradhan, \emph{On the $P GL_2(q)$-orbits of lines of
$P G(3, q)$ and binary quartic forms}, arXiv:2312.07118, 2023.

\bibitem{Mordell}
L. J. Mordell, \emph{Diophantine equations, Pure and Applied Mathematics}, vol. Vol. 30, Academic
Press, London-New York, 1969.

\bibitem{Silverman_AEC}
Joseph H. Silverman, \emph{The arithmetic of elliptic curves}, second ed., Graduate Texts in Mathematics, vol. 106, Springer, Dordrecht, 2009.

\end{thebibliography}

\end{document}